\DeclareMathOperator{\dist}{dist}
\def\eps{{\varepsilon}}
\def\1{\mathbf{1}}
\def\dim{d}
\def\N{\mathbb{N}}
\def\R{\mathbb{R}}
\def\H{\mathcal{H}}
\def\Dr{D}
\newcommand{\be}{\begin{equation}}
\newcommand{\ee}{\end{equation}}
\newcommand{\bib}[4]{\bibitem{#1}{\sc#2: }{\it#3. }{#4.}}
\newcommand{\cp}{\mathop{\rm cap}\nolimits}
\numberwithin{equation}{section}
\theoremstyle{plain}
\newtheorem{teo}{Theorem}[section]
\newtheorem{lemma}[teo]{Lemma}
\newtheorem{prop}[teo]{Proposition}
\newtheorem{deff}[teo]{Definition}
\theoremstyle{remark}
\newtheorem{oss}[teo]{Remark}
\title{Existence and regularity of minimizers for some spectral functionals with perimeter constraint}
\author{Guido De Philippis, Bozhidar Velichkov}
\date{}
\begin{document}

\maketitle

\begin{abstract}
In this paper we prove that the shape optimization problem 
$$\min\left\{\lambda_k(\Omega):\ \Omega\subset\R^d,\ \Omega\ \hbox{open},\ P(\Omega)=1,\ |\Omega|<+\infty\right\},$$
where $\lambda_k(\Omega)$ is the $k$th eigenvalue of the Dirichlet Laplacian,
has a solution for any $k\in\N$ and dimension $d$. Moreover, every solution is a bounded connected open set with boundary which is $C^{1,\alpha}$ outside a closed set of Hausdorff dimension $d-8$. Our results apply to general spectral functionals of the form $f(\lambda_{k_1}(\Omega),\dots,\lambda_{k_p}(\Omega))$, for increasing functions $f$ satisfying some suitable bi-Lipschitz type condition. 
\end{abstract}

\vspace{1cm}
\section{Introduction}
A shape optimization problem is a variational problem of the type
\begin{equation}\label{F}
\min\left\{F(\Omega):\ \Omega\in\mathcal{A}\right\},
\end{equation}
where $\mathcal{A}$ is an \emph{admissible family} (or \emph{admissible set}) of domains in $\R^d$ and $F$ is a given \emph{cost functional}. For a detailed introduction to this type of problems we refer to the books \cite{bubu05,hen06,hepi05} and the papers \cite{but10,hen03}. 

\medskip
   A particularly interesting and widely studied class of shape optimization problems is the one in which the admissible set $\mathcal{A}$ is composed of open domains with perimeter or measure constraint and the cost functional $F$ depends on the solution of some partial differential equation on $\Omega$. For example, $F(\Omega)=\lambda_k(\Omega)$, where $\lambda_k(\Omega)$ is the $k$th eigenvalue of the Dirichlet Laplacian, i.e. the \(k\)th smallest positive real number such that the equation 
$$-\Delta u_k=\lambda_k(\Omega) u_k,\qquad u_k\in H^1_0(\Omega),$$
has a non-trivial solution. Another typical example is 
\[
F(\Omega)=E_\rho(\Omega):=-\frac12\int_\Omega \rho w\,dx,
\]
 where $\rho\in L^2(\R^d)$ and $w$ is the solution of
$$-\Delta w=\rho,\qquad w\in H^1_0(\Omega).$$
Unfortunately, there is no available technique to directly prove the existence of a solution of \eqref{F} in the family of open sets. The standard approach is to extend the cost functional to a wider class of domains and then prove that the minimizers are open sets, hopefully with a smooth boundary. 

   The existence in the case when the domains are measurable sets is well-known if the admissible family of domains consists of sets which satisfy some geometric constraint which assures a priori some compactness. Under some monotonicity and semi-continuity assumptions on the cost functional $F$ a general theorem of Buttazzo and Dal Maso states that there are solutions of \eqref{F}, in each of the cases 
\begin{equation}\label{mc}
\mathcal{A}=\left\{\Omega\subset\Dr:\ \Omega\ \hbox{Lebesgue measurable},\ |\Omega|\le c\right\},
\end{equation}
(see \cite{budm91,budm93}) and  
\begin{equation}\label{pc}
\mathcal{A}=\left\{\Omega\subset\Dr:\ \Omega\ \hbox{Lebesgue measurable},\ P(\Omega)\le c,\ |\Omega|<+\infty \right\},
\end{equation} 
(see \cite{bubuhe}). Here $|\Omega|$ is  the Lebesgue measure of \(\Omega\), $P(\Omega)$  the De Giorgi perimeter of \(\Omega\) (see \cite{giusti,maggi}) and the \emph{design region} $\Dr$ is a \emph{bounded} open set in $\R^d$. Clearly, in each of these cases the functional $F$ has to be appropriately extended on the class of Lebesgue measurable sets in $\Dr$ (see Section \ref{pr} for more details). 

\bigskip

The case $\Dr=\R^d$ is more involved and it was only recently
proved (see  \cite{bulbk} and  \cite{mp}) that when $F(\Omega)=\lambda_k(\Omega)$, for some $k\in\N$, there exists a solution of the problem \eqref{F}, where the admissible set $\mathcal{A}$ is given by \eqref{mc}. 

\medskip
   Once the existence of a solution of \eqref{F} is established in the class of measurable sets it is quite natural to ask whether these optimal sets are open (hence being also  a solution of the same problem in the more ``natural'' class of  open sets) and, in case the answer is affirmative, which is the regularity of the boundary of these optimal sets. 
  
   The study of the regularity of the optimal set in the case of a measure constraint strongly depends on the nature of the cost functional. Even the openness is a difficult question which is known to have a positive answer in the case of the Dirichlet energy $F(\Omega)=E_\rho(\Omega)$ (see \cite{brhapi}) and some spectral functionals $F(\Omega)=f(\lambda_1(\Omega),\dots,\lambda_k(\Omega))$ (see \cite{brla}, \cite{bucve} and \cite{bmpv}). 

   In the cases $F(\Omega)=E_\rho(\Omega)$, $F(\Omega)=\lambda_1(\Omega)$ the problem \eqref{F} with admissible set \eqref{mc} can be written in terms of a single function in $H^1_0(\Dr)$, i.e. the corresponding shape optimization problems are equivalent to
\begin{equation}\label{envar}
\min\left\{\frac12\int_{\Dr} |\nabla w|^2\,dx-\int_{\Dr} \rho w\,dx:\ w\in H^1_0(\Dr),\ |\{w\neq 0\}|\le c\right\},
\end{equation}
and
\begin{equation}\label{lbvar}
\min\left\{\frac{\int_{\Dr} |\nabla u|^2\,dx}{\int_{\Dr} u^2\,dx}:\ u\in H^1_0(\Dr),\ |\{u\neq 0\}|\le c\right\},
\end{equation}
respectively. For these functionals one can apply the techniques introduced in \cite{altcaf} to study the regularity of the optimal domains (see \cite{brhapi} and \cite{brla}, for two different arguments based on this idea). When the cost functional $F(\Omega)$ depends on the spectrum of the Dirichlet Laplacian the regularity of the optimal sets is still not known in the case of a measure constraint. 
  The main difficulty in this case is due to the fact that the variational formulation of $\lambda_k(\Omega)$ does not concern functions but $k$-dimensional spaces of functions, which makes the analysis quite difficult. More precisely, we have
\begin{equation}\label{lambdak}
\lambda_k(\Omega)=\min_{K\subset H^1_0(\Omega)}\,\max_{u\in K}\,\,\frac{\int_{\Omega} |\nabla u|^2\,dx}{\int_{\Omega} u^2\,dx},
\end{equation}
where the minimum is over all $k$-dimensional subspaces $K$ of $H^1_0(\Omega)$. 

   In this paper we are interested in the study of  existence and regularity for the problem \eqref{F} under the perimeter constraint \eqref{pc} when the design region is $\Dr=\R^d$. Our main result is the  following:
\begin{teo}\label{thkperint}
The shape optimization problem 
\begin{equation}\label{lbko}
\min\left\{\lambda_k(\Omega):\ \Omega\subset\R^d,\ \Omega\ \hbox{open},\ P(\Omega)=1,\ |\Omega|<\infty\right\},
\end{equation}
where $\lambda_k(\Omega)$ is defined in \eqref{lambdak} through the classical Sobolev space $H^1_0(\Omega)$ on the open set $\Omega$, has a solution. Moreover, any optimal set $\Omega$ is bounded  and connected. The boundary $\partial\Omega$ is $C^{1,\alpha}$, for every $\alpha\in(0,1)$, outside a closed set of Hausdorff dimension at most $d-8$.
\end{teo}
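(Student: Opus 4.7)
The plan is to split the argument into three stages: existence in a relaxed class, qualitative properties (boundedness, openness, connectedness) of any minimizer, and finally the regularity of the free boundary via an almost-minimality comparison with the perimeter functional.

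For \emph{existence}, I would start from the result of Bucur-Buttazzo-Henrot \cite{bubuhe}: for every large ball $B_R$ there is a quasi-open minimizer $\Omega_R \subset B_R$ for $\lambda_k$ under $P(\Omega)\le 1$, $|\Omega|<\infty$. To pass to $\Dr=\R^d$ I would run a concentration-compactness argument on a minimizing sequence $\{\Omega_n\}$, taking advantage of the translation invariance of both $\lambda_k$ and $P$, and of the $L^1_{\loc}$ compactness provided by the uniform perimeter bound. The main alternative to rule out is the dichotomy case in which the mass splits into two families of pieces escaping to infinity; here the scaling laws $\lambda_k(tE)=t^{-2}\lambda_k(E)$ and $P(tE)=t^{d-1}P(E)$ imply that rescaling any single piece to have unit perimeter strictly lowers $\lambda_k$, so any minimizer must be connected and captured in the limit. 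This also yields the equality $P(\Omega)=1$ (otherwise dilate), the connectedness statement of the theorem, and existence of a quasi-open minimizer in $\R^d$.

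For \emph{boundedness} and \emph{openness} I would combine two ingredients: (i) boundedness of eigenfunctions (from $|\Omega|<\infty$ and the De Giorgi-type Moser iteration) and their Lipschitz continuity, which can be achieved by the techniques of \cite{bmpv}; and (ii) an exterior cut-off argument: if a piece of $\Omega$ lies at large distance from the bulk, removing it yields a genuine perimeter saving proportional to its boundary, against which only a controlled change of $\lambda_k$ can be set (quantified again via Lipschitz bounds on eigenfunctions). Openness then follows by replacing $\Omega$ by the open set where a suitable collection of Lipschitz eigenfunctions is nonzero, without changing either $\lambda_k$ or $P$.

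For the \emph{regularity}, which is the main obstacle, the strategy is to convert the constrained problem into an unconstrained one and then to almost-minimality for the perimeter. First, by a standard Lagrange-multiplier argument, one shows that there exists $\Lambda>0$ such that any optimal $\Omega$ minimizes
\[
\Omega'\ \longmapsto\ \lambda_k(\Omega')+\Lambda\,\bigl|P(\Omega')-1\bigr|
\]
among competitors. For any $\tilde\Omega$ with $\Omega\,\symd\,\tilde\Omega\Subset B_r(x)$ this gives
\[
P(\Omega)\le P(\tilde\Omega)+\Lambda^{-1}\bigl(\lambda_k(\tilde\Omega)-\lambda_k(\Omega)\bigr).
\]
The crucial one-sided estimate $\lambda_k(\tilde\Omega)-\lambda_k(\Omega)\le C|\Omega\,\symd\,\tilde\Omega|$ is obtained by constructing a competitor $k$-dimensional subspace in $H^1_0(\tilde\Omega)$ from modifications of the first $k$ eigenfunctions on $\Omega$ and estimating the Rayleigh quotients via the min-max formula \eqref{lambdak}; the Lipschitz character of the eigenfunctions, together with the fact that the symmetric difference is confined to $B_r(x)$, is what makes this bound hold with a constant uniform in the competitor. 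The hard part here is precisely that, unlike for $\lambda_1$, the $k$-th eigenvalue is not the infimum of a single Rayleigh quotient, and its derivative along domain perturbations need not exist when $\lambda_k$ is degenerate; the min-max formulation forces one to treat all the eigenfunctions up to level $k$ simultaneously. Once this almost-minimality $P(\Omega)\le P(\tilde\Omega)+C r^d$ is in hand, $\Omega$ is an $\omega$-minimizer of the perimeter in the sense of Tamanini and the classical regularity theory yields that $\partial^\ast\Omega$ is $C^{1,\alpha}$ for every $\alpha\in(0,1)$ and that $\partial\Omega\setminus\partial^\ast\Omega$ has Hausdorff dimension at most $d-8$, completing the theorem.
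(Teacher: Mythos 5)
Your overall architecture (concentration--compactness for existence, then almost-minimality for the perimeter and Tamanini's theory for regularity) matches the paper's, but two of your key steps contain genuine gaps. The first is the dichotomy case. You claim that rescaling ``any single piece to have unit perimeter strictly lowers $\lambda_k$'', so dichotomy is excluded and the minimizer is connected. This only works when \emph{all} of the first $k$ eigenvalues of $A_n\cup B_n$ are realized by one piece; in general $\lambda_k(A_n\cup B_n)=\lambda_l(A_n)$ with $l<k$ and the remaining $k-l$ eigenvalues live on $B_n$, and rescaling one piece violates the joint perimeter constraint. The paper handles this by induction on $k$: in the dichotomy case it replaces $A_n$ and $B_n$ by \emph{bounded} optimal sets for $\lambda_l$ and $\lambda_{k-l}$ with the corresponding perimeter budgets (boundedness of lower-order minimizers being exactly what makes the disjoint union admissible), and thereby still produces a minimizer --- a priori a disconnected one. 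Connectedness is \emph{not} a by-product of concentration--compactness; it is proved a posteriori (Proposition \ref{conn}) using the full regularity theory, by translating two components until they touch at a point which is shown to be regular, and then constructing an outer variation (a small cylinder at the contact point) that strictly decreases the perimeter, contradicting the fact that minimizers are perimeter supersolutions.

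The second gap is the estimate $\lambda_k(\widetilde\Omega)-\lambda_k(\Omega)\le C\,|\Omega\,\symd\,\widetilde\Omega|$ with a constant uniform in the competitor. To obtain it by cutting off the first $k$ eigenfunctions near $\Omega\setminus\widetilde\Omega$ and estimating Rayleigh quotients you need Lipschitz (or at least uniform gradient) bounds on the eigenfunctions of the \emph{optimal} set; for $k\ge 2$ this is precisely the hard open ingredient (the reference \cite{bmpv} you invoke is ``in preparation'' and concerns the measure constraint), so your argument is circular: eigenfunction regularity is not available before the domain regularity you are trying to prove. The paper circumvents this entirely: it only needs \emph{inner} variations $U\subset\Omega$, for which \cite[Lemma 4.1]{bulbk} gives $\widetilde\lambda_{k}(U)-\widetilde\lambda_{k}(\Omega)\le C\widetilde\lambda_k(U)\bigl(\widetilde E(U)-\widetilde E(\Omega)\bigr)$, reducing everything to the torsion energy, i.e.\ to a single state function $w_\Omega$ (the ``energy subsolution'' property of Definition \ref{sub}). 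The Lipschitz continuity of $w_\Omega$ --- not of the eigenfunctions --- is then proved by a barrier argument (Lemma \ref{dist} and Proposition \ref{lip}) exploiting only the fact that perimeter supersolutions have nonnegative mean curvature in the viscosity sense; outer variations are free because $\Omega$ is a perimeter supersolution, and the two are combined by splitting a general variation into its inner and outer parts. Your boundedness/openness step should likewise be rerouted through $w_\Omega$: openness follows from the exterior density estimate enjoyed by perimeter supersolutions (Proposition \ref{soblike}), and boundedness from the energy subsolution property (Lemma \ref{bounded}), with no appeal to eigenfunction regularity.
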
   

This result is a consequence of the more general Theorem \ref{thfperint} which applies to spectral functionals of the form $$F(\Omega)=f(\lambda_{k_1}(\Omega),\dots,\lambda_{k_p}(\Omega)),$$ 
where $f:\R^p\to\R$ is an increasing locally Lipschitz function satisfying some local bi-Lipschitz-type assumption. More precisely, we consider $f$ such that:
\begin{enumerate}[($f$1)]
\item $f(x)\to+\infty$ as $|x|\to+\infty$; 
\item $f$ is locally Lipschitz continuous;
\item $f$ is increasing, i.e. for any $x=(x_1,\dots,x_p)\in\R^p$ and $y=(y_1,\dots,y_p)\in\R^p$ such that $x\ge y$, i.e. satisfying $x_j\ge y_j$, for every $j=1,\dots,p$, we have $f(x)\ge f(y)$. More precisely we assume that  for every compact set $K\subset\R^d\setminus\{0\}$, there exists a constant $a>0$ such that for any $x,y\in K$, $x\ge y$,  $$f(x)-f(y)\ge a|x-y|.$$ 
\end{enumerate}
For example, any polynomial of $\lambda_{k_1},\dots,\lambda_{k_p}$ with positive coefficients satisfies $(f1)$, $(f2)$ and $(f3)$.    

\begin{teo}\label{thfperint}
Suppose that $f:\R^p\to\R$ satisfies the assumptions $(f1)$, $(f2)$ and $(f3)$. Then the shape optimization problem 
\begin{equation}\label{lbfperint}
\min\left\{f\left(\lambda_{k_1}(\Omega),\dots,\lambda_{k_p}(\Omega)\right):\ \Omega\subset\R^d,\ \Omega\ \hbox{open},\ P(\Omega)=1,\ |\Omega|<\infty\right\},
\end{equation}
has a solution. Moreover, any optimal set $\Omega$ is bounded and connected and its boundary $\partial\Omega$ is $C^{1,\alpha}$, for every $\alpha\in(0,1)$, outside a closed set of Hausdorff dimension at most $d-8$.
\end{teo}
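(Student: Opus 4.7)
I would follow the standard three-stage strategy for such shape problems: existence in a relaxed class, quasi-minimality of the perimeter for the relaxed minimizer, and regularity via Almgren--De Giorgi--Tamanini theory.

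\medskip
\noindent\emph{Existence.} I would first relax the problem to the class of quasi-open sets in $\R^d$, where $\lambda_{k_i}$ admits a natural extension and the functional $F:=f(\lambda_{k_1},\dots,\lambda_{k_p})$ is well defined. For any minimizing sequence $(\Omega_n)$, the perimeter constraint combined with the isoperimetric inequality gives $|\Omega_n|\le C$, so $\1_{\Omega_n}$ is bounded in $BV(\R^d)$. After translation one then runs a concentration-compactness alternative: vanishing (i.e.\ $|\Omega_n\cap B_R(x)|\to 0$ for every $R$ and $x$) is ruled out because it forces $\lambda_{k_i}(\Omega_n)\to+\infty$ and hence $F\to+\infty$ by $(f1)$; splitting into two or more pieces drifting apart to infinity is ruled out by rescaling each piece to have perimeter $1$ and using $(f3)$ to show that one of them already beats the infimum. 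Therefore $\1_{\Omega_n}\to\1_{\Omega^*}$ in $L^1(\R^d)$ for some set $\Omega^*$, and the lower semicontinuity of the eigenvalues along such sequences (a standard $\gamma$-type argument using compactness of Sobolev embeddings on sets of bounded perimeter) together with the lower semicontinuity of the perimeter yields a quasi-open minimizer $\Omega^*$. If $P(\Omega^*)<1$, a dilation makes it admissible without increasing $F$, since scaling up decreases eigenvalues and $f$ is monotone.

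\medskip
\noindent\emph{Quasi-minimality of the perimeter.} The central step is to prove that $\Omega^*$ is a $\Lambda$-quasi-minimizer of the perimeter in the Tamanini sense: there exist $\Lambda,r_0>0$ such that
\be
P(\Omega^*)\le P(E)+\Lambda\,|E\symd\Omega^*|\qquad\text{whenever } E\symd\Omega^*\subset B_{r_0}(x_0).
\ee
This is the heart of the argument. The key analytic input is a Lipschitz-type estimate $|\lambda_{k_i}(E)-\lambda_{k_i}(\Omega^*)|\le C|E\symd\Omega^*|$ for small perturbations, obtained by inserting into the min-max formula \eqref{lambdak} the eigenfunctions of $\Omega^*$ suitably cut off away from $\Omega^*\setminus E$, and then symmetrically exchanging the roles of $E$ and $\Omega^*$. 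Assumption $(f2)$ then gives $|F(E)-F(\Omega^*)|\le C|E\symd\Omega^*|$. Rescaling $E$ back into the constraint $P=1$ and exploiting the minimality of $\Omega^*$ give the quasi-minimality inequality; here assumption $(f3)$ is critical, as it converts inequalities for $F$ into inequalities for the spectrum with a controlled constant and so closes the Euler--Lagrange-type comparison.

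\medskip
\noindent\emph{Regularity, openness, boundedness, connectedness.} Once quasi-minimality is established, the classical Almgren--De Giorgi--Tamanini regularity theory applies: the reduced boundary $\partial^*\Omega^*$ is locally a $C^{1,\alpha}$-hypersurface for every $\alpha\in(0,1)$, and the singular set $\partial\Omega^*\setminus\partial^*\Omega^*$ has Hausdorff dimension at most $d-8$. In particular, the topological boundary has zero Lebesgue measure, so $\Omega^*$ admits an open representative which solves the original problem \eqref{lbfperint}. Boundedness follows from the density estimates built into quasi-minimality together with a surgery argument that removes far-off components by translating them to infinity and rescaling, using $(f3)$ to produce a strictly better competitor if such components existed. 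Connectedness is proved similarly: if $\Omega^*=A\sqcup B$ with $A,B$ at positive distance, sliding $B$ rigidly against $A$ preserves each $\lambda_{k_i}$ but, upon contact and subsequent smoothing, strictly reduces the perimeter; rescaling then strictly reduces $F$.

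\medskip
The hardest step is clearly the quasi-minimality one. The real obstacle is to produce a Lipschitz estimate on $\Omega\mapsto\lambda_{k_i}(\Omega)$ with respect to $L^1$-perturbations that is sharp, uniform in the competitor, and compatible with the rescaling needed to restore $P=1$; here the two-sided bound in $(f3)$ is essential, since without it the Euler--Lagrange inequality for the perimeter degenerates and no quasi-minimality constant can be extracted.
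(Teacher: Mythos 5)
There are two genuine gaps, both located at the steps you yourself identify as hardest.

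First, the dichotomy case of concentration--compactness cannot be \emph{ruled out} by ``rescaling each piece to have perimeter $1$'': if $\Omega_n=A_n\cup B_n$ with the pieces drifting apart and, say, $\widetilde\lambda_k(\Omega_n)=\max\{\widetilde\lambda_l(A_n),\widetilde\lambda_{k-l}(B_n)\}$ with $0<l<k$, then neither rescaled piece alone controls $\lambda_k$ (its $k$th eigenvalue can be far larger than $\lambda_k(A_n\cup B_n)$), so no single piece ``beats the infimum''. Only the degenerate case in which one piece realizes all the relevant eigenvalues can be excluded by rescaling. The paper instead \emph{accepts} dichotomy and handles it constructively by induction (on $k$ in Theorem \ref{th1}, on the number of variables $p$ in Theorem \ref{thfper}): it replaces $A_n$ and $B_n$ by optimal sets of lower-order sub-problems, and it is precisely here that the boundedness of minimizers (Lemma \ref{bounded}) is needed, to place the two optimal pieces at positive distance. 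In your sketch boundedness appears only after regularity, so the logical order cannot be repaired without reorganizing the argument.

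Second, the Lipschitz estimate $|\lambda_{k_i}(E)-\lambda_{k_i}(\Omega^*)|\le C|E\symd\Omega^*|$ that you propose to obtain ``by cutting off the eigenfunctions of $\Omega^*$'' is not available: the cut-off costs the Dirichlet energy of the eigenfunctions on a neighbourhood of $\Omega^*\setminus E$, and controlling this by $|\Omega^*\setminus E|$ would require Lipschitz (or at least uniformly $H^1$-quantified) regularity of the eigenfunctions at the optimal shape, which is exactly the open difficulty the paper is designed to avoid (cf.\ the reference \cite{bmpv}, in preparation). The paper's route replaces the eigenfunctions by the torsion function $w_\Omega$: via \cite[Lemma 4.1]{bulbk} and $(f3)$ one shows the minimizer is an \emph{energy subsolution} (Proposition \ref{lag}), i.e.\ inner variations of $\widetilde E(\Omega)+kP(\Omega)$ are signed; separately, the minimizer is a \emph{perimeter supersolution} (Lemma \ref{super}), which yields exterior density estimates, openness of the representative with $\widetilde H^1_0(\Omega)=H^1_0(\Omega_1)$ (Proposition \ref{soblike}, a point your ``open representative up to measure zero'' remark glosses over, since $H^1_0$ of an open set can be strictly smaller than $\widetilde H^1_0$), and --- through the viscosity mean-curvature property of $d(\cdot,\Omega^c)$ (Lemma \ref{dist}) --- the Lipschitz continuity of $w_\Omega$. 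Only then does the inner-variation inequality of Lemma \ref{sopra} produce the quasi-minimality error $Cr^d$. Two smaller points: Tamanini's theory gives $C^{1,\alpha}$ only for $\alpha<1/2$, and the full range $\alpha\in(0,1)$ requires the additional Euler--Lagrange bootstrap of Step 2 of Theorem \ref{enper}; and boundedness does not follow from density estimates plus removal of far components (a connected unbounded tentacle is not excluded that way) --- the paper proves it by cutting with hyperplanes and a differential inequality for $t\mapsto|\Omega\cap\{x_1>t\}|$.
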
 
\begin{proof}
The existence of an optimal set is first proved in the class of measurable sets in Theorem \ref{thfper}. The existence of a solution $\Omega$ of \eqref{lbfperint} is proved in Theorem \ref{thfpero}. The boundedness of $\Omega$ is due to the facts that $\Omega$ is an energy subsolution (see Definition \ref{sub} and Proposition \ref{lag}) and that every energy subsolution is a bounded set (Lemma \ref{bounded}). The problem of regularity of the optimal set is treated in Section \ref{reg}. More precisely, by Proposition \ref{lag} and Lemma \ref{super}, we have that $\Omega$ is an energy subsolution and a perimeter supersolution. In Theorem \ref{enper}, we prove that any $\Omega$ satisfying those two conditions has $C^{1,\alpha}$ boundary, for every $\alpha\in(0,1)$, outside a closed set of Hausdorff dimension at most $d-8$. The connectedness of the optimal set follows by Proposition \ref{conn}.   
\end{proof}
%

\begin{oss}
The regularity of the free boundary proved in Theorems \ref{thkperint} and \ref{thfperint} is not in general optimal. Indeed, it was shown in \cite{bubuhe} that the solution $\Omega$ of \eqref{lbko} for $k=2$ has smooth boundary. The proof is based on a perturbation technique and the fact that $\lambda_2(\Omega)>\lambda_1(\Omega)$ and can be applied for every $k\in\N$ under the assumption that the optimal set is such that $\lambda_k(\Omega)>\lambda_{k-1}(\Omega)$, see Remark \ref{analytic}. On the other hand it is expected (due to some numerical  computations) that the optimal set $\Omega$ for $\lambda_3$ in $\R^2$ is a ball and, in particular, $\lambda_3(\Omega)=\lambda_2(\Omega)$.
\end{oss}

\begin{oss}
The bound on the diameter of the optimal set $\Omega$, solution of \eqref{lbfperint}, depends on the function $f$ and on $\lambda_{k_p}(\Omega)$. In dimension two, this bound is trivially uniform, since it depends only on the perimeter constraint. This fact (together with he convexity of the minimizers) was used in \cite{bufr13} to study the asymptotic behaviour of the optimal sets $\Omega_k$, solutions of \eqref{lbko}. More precisely, it was proved that the sequence $\Omega_k\subset\R^2$ converges in the Hausdorff distance (up to translations) to the ball of unit perimeter. The analogous result in higher dimensions is not known yet.  
\end{oss}




\section{Preliminaries}\label{pr}
In this section we introduce the notions and results that we will need in the rest of the paper. As we saw in the introduction, we will have to solve partial differential equations on domains which are not open sets. For this purpose we extend the notion of a Sobolev space to any measurable set $\Omega\subset\R^d$, introducing the Sobolev-like spaces $\widetilde{H}^1_0(\Omega)$ defined as
\begin{equation}\label{sob}
\widetilde{H}^1_0(\Omega)=\left\{u\in H^1(\R^d):\ u=0\ \hbox{a.e. on}\ \Omega^c\right\},
\end{equation} 
where the term almost everywhere (shortly a.e.) refers to the Lebesgue measure $|\cdot|$ on $\R^d$. 
\begin{oss}
Note that even for open sets $\Omega$ the above definition differs from the classical one, in which $H^1_0(\Omega)$ is the closure of the smooth functions with compact support $C^\infty_c(\Omega)$ with respect to the norm
 \[
 \|\varphi\|^2_{H^1}:=\|\varphi\|_{L^2}^2+\|\nabla\varphi\|_{L^2}^2.
 \]
 To see that, one may take for example $\Omega$ to be the unit ball minus a hyperplane passing through the origin. Nevertheless, we have equality $H^1_0(\Omega)=\widetilde{H}^1_0(\Omega)$, if $\Omega$ is a bounded open set with Lipschitz boundary or, more generally, an open set satisfying a uniform exterior  density estimate (Proposition \ref{soblike}).  
\end{oss}

\begin{oss}\label{sobcap}
In the case of measure constraint another definition of a Sobolev space is used. Indeed, for any measurable set $\Omega\subset\R^d$ one may consider 
\begin{equation*}
H^1_0(\Omega)=\left\{u\in H^1(\R^d):\ \cp(\{u\neq0\}\cap\Omega^c)=0\right\},
\end{equation*} 
where the capacity $\cp(E)$ of a generic set $E\subset\R^d$ is defined as
\begin{equation}\label{sobcap1}
\cp(E)=\inf\left\{\|u\|_{H^1}:\ u\in H^1(\R^d),\ u=1\ \hbox{on a neighbourhood of}\ E\right\}.
\end{equation} 
In \cite[Theorem 3.3.42]{hepi05} it was proved that the above definition coincides with the classical one on the open sets of $\R^d$. There is a close relation between the Sobolev spaces as defined in \eqref{sobcap1} and Sobolev-like spaces from \eqref{sob}. It fact, for every measurable set $\Omega\subset\R^d$, we have the inclusion $H^1_0(\Omega)\subset\widetilde{H}^1_0(\Omega)$. Moreover, since $\widetilde H^1_0(\Omega)$ is separable, it is not hard to check that there is a measurable set $U\subset\R^d$ such that $U\subset\Omega$ a.e. and $H^1_0(U)=\widetilde{H}^1_0(\Omega)$ (take, for example, $U$ to be  the union of the supports of Sobolev functions, which form a dense subset of $\widetilde H^1_0(\Omega)$). The reason to work with definition \eqref{sob} instead of \eqref{sobcap1} is that we do not know the relation (if any) between the perimeter of $\Omega$ and the perimeter of $U$. 
\end{oss}

For any $\Omega\subset\R^d$ of finite measure and any $f\in L^2(\R^d)$, we define $R_\Omega(f)\in \widetilde H^1_0(\Omega)$ as the weak solution, in $\widetilde H^1_0(\Omega)$, of the equation
\begin{equation}\label{eqf}
-\Delta u=f,\qquad u\in\widetilde{H}^1_0(\Omega),
\end{equation}
or, equivalently, the unique minimizer in $\widetilde H^1_0(\Omega)$ of the convex functional
$$J_f(u)=\frac12\int_\Omega |\nabla u|^2\,dx-\int_\Omega fu\,dx.$$
Applying the Sobolev inequality in $\R^d$ and using $R_\Omega(f)$ as a test function in \eqref{eqf}, we have 
$$\|R_\Omega(f)\|^2_{L^2}\le |\Omega|^{2/d}\|R_\Omega(f)\|^2_{L^{\frac{2d}{d-2}}}\le C_d|\Omega|^{2/d}\|\nabla (R_\Omega(f))\|^2_{L^2}\le C_d|\Omega|^{2/d}\|R_\Omega(f)\|_{L^2}\|f\|_{L^2},$$
and so, $R_\Omega:L^2(\R^d)\to L^2(\R^d)$ is a bounded linear symmetric operator such that
$$\|R_\Omega\|_{\mathcal{L}(L^2(\R^d))}\le C_d|\Omega|^{2/d},\qquad R_\Omega(L^2(\R^d))\subset\widetilde{H}^1_0(\Omega).$$
Moreover, $R_\Omega$ is compact and positive and so, it has a discrete spectrum $\sigma(R_\Omega)$ contained in $\R^+$, which we write as 
$$0\le\dots\le\Lambda_k(\Omega)\le\dots\le\Lambda_1(\Omega).$$
We define $\widetilde\lambda_k(\Omega)$ as the inverse of $\Lambda_k(\Omega)$. Moreover, we have the variational characterization, analogous to the one in \eqref{lambdak}:
\begin{equation}
\widetilde\lambda_k(\Omega)=\min_{K\subset \widetilde{H}^1_0(\Omega)}\,\max_{u\in K}\,\,\frac{\int_{\Omega} |\nabla u|^2\,dx}{\int_{\Omega} u^2\,dx},
\end{equation}
where the minimum is over all $k$-dimensional linear subspaces $K$ of $\widetilde{H}^1_0(\Omega)$.

\bigskip
   For any measurable set $\Omega$ of finite Lebesgue measure, we denote with $w_\Omega$ the weak solution of the equation
\begin{equation}\label{eq1}
-\Delta w_\Omega=1,\qquad w_\Omega\in\widetilde{H}^1_0(\Omega),
\end{equation}   
i.e. the $w_\Omega$ is the unique minimizer in $\widetilde{H}^1_0(\Omega)$ of the functional 
\begin{equation}\label{eq2}
J(w)=\frac12\int_{\Omega}|\nabla w|^2\,dx-\int_\Omega w\,dx.
\end{equation}
We define the energy functional $\widetilde E(\Omega)$ as 
\begin{equation}\label{eq3}
\widetilde{E}(\Omega)=\min_{w\in \widetilde{H}^1_0(\Omega)}J(w)=J(w_\Omega)=-\frac12\int_\Omega w_\Omega\,dx,
\end{equation}
where to obtain the last equality we have used the definition of \(J\) and the relation
\[
\int_\Omega |\nabla w_\Omega|^2\,dx=\int_\Omega w_\Omega\,dx,
\]
which follows after testing \eqref{eq1} with \(w_\Omega\).
\begin{oss}\label{propenfun}
For the solution of \eqref{eq1} we have the estimates 
\begin{equation}\label{propenfun1}
\|w_\Omega\|_{L^2}\le C_d|\Omega|^{1+2/d},\qquad\|\nabla w_\Omega\|^2_{L^2}\le C_d|\Omega|^{3/4+2/d}.
\end{equation}
Moreover, by \cite{talenti},  $w_\Omega\in L^\infty(\R^d)$ and 
\begin{equation}\label{propenfun2}
\|w_\Omega\|_{L^\infty}\le C_d|\Omega|^{2/d},
\end{equation}
where $C_d$ is a constant depending only on the dimension. We also note that in the framework of Sobolev-like spaces, the weak maximum principle still holds ($w_U\le w_\Omega$, whenever $U\subset\Omega$), while the analogous of the strong maximum principle is the following equality (see Proposition \ref{soblike})
\begin{equation}\label{propenfun3}
H^1_0(\{w_\Omega>0\})=\widetilde{H}^1_0(\{w_\Omega>0\})=\widetilde{H}^1_0(\Omega),
\end{equation}
i.e. one may take $U=\{w_\Omega>0\}$ in Remark \ref{sobcap}. 
\end{oss} 

The relation between the operator $R_\Omega$ and the solution $w_\Omega$, is explained in the following two propositions
\begin{prop}\label{prop33}
Suppose that $\Omega_n\subset\R^d$ is a sequence of measurable sets of uniformly bounded Lebesgue measure and suppose that $w_{\Omega_n}$ converges to some $w\in H^1(\R^d)$ strongly in $L^2(\R^d)$, as $n\to\infty$. Then, there is a compact self-adjoint operator $R$ on $L^2(\R^d)$ such that $R_{\Omega_n}$ converges to $R$ in the strong operator topology in $\mathcal{L}(L^2(\R^d))$ and $R\le R_\Omega$ in sense of operators, i.e.
$$\int_{\R^d}R(f)f\,dx\le \int_{\R^d}R_\Omega(f)f\,dx,\ \hbox{for every}\ f\in L^2(\R^d),$$
where we set $\Omega=\{w>0\}$.  
\end{prop}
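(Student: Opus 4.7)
The strategy is to extract a subsequential strong-operator limit $R$ of $R_{\Omega_n}$, identify that the limit takes values in $\widetilde{H}^1_0(\Omega)$ via a pointwise bound by the torsion functions $w_{\Omega_n}$, and then compare $R$ with $R_\Omega$ by a one-line variational argument. Testing the defining PDE with $R_{\Omega_n}(f)$ gives the energy identity
$$\|\nabla R_{\Omega_n}(f)\|_{L^2}^2=\int_{\R^d} f\, R_{\Omega_n}(f)\,dx\le C_d|\Omega_n|^{2/d}\|f\|_{L^2}^2,$$
so $R_{\Omega_n}(f)$ is uniformly bounded in $H^1(\R^d)$ for every $f\in L^2$. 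Moreover, for $f\in L^\infty\cap L^2$ the weak maximum principle applied to $\|f\|_\infty w_{\Omega_n}\pm R_{\Omega_n}(f)\in\widetilde{H}^1_0(\Omega_n)$ yields the crucial pointwise estimate
$$|R_{\Omega_n}(f)|\le\|f\|_\infty\, w_{\Omega_n}\quad\text{a.e.\ in }\R^d.$$

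Next, I fix a countable dense subset $\{f_j\}\subset L^\infty\cap L^2(\R^d)$ and, by Cantor diagonal extraction, pass to a (not relabeled) subsequence along which $R_{\Omega_n}(f_j)\rightharpoonup u_j$ weakly in $H^1(\R^d)$ for every $j$. The pointwise domination above, combined with the hypothesized strong $L^2$ convergence $w_{\Omega_n}\to w$, supplies tightness and equi-integrability of $R_{\Omega_n}(f_j)$, so local Rellich upgrades the weak $H^1$ convergence to strong $L^2$ convergence $R_{\Omega_n}(f_j)\to u_j$. Setting $Rf_j:=u_j$ and extending by continuity from the uniform operator bound $\|R_{\Omega_n}\|_{\mathcal{L}(L^2)}\le C$, one obtains $R_{\Omega_n}\to R$ in the strong operator topology on all of $L^2(\R^d)$ via a standard three-$\varepsilon$ argument. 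Passing to the pointwise limit in $|R_{\Omega_n}(f_j)|\le\|f_j\|_\infty w_{\Omega_n}$ gives $|Rf|\le\|f\|_\infty w$ a.e., so $Rf=0$ a.e.\ on $\{w=0\}$ and therefore $Rf\in\widetilde{H}^1_0(\Omega)$ with $\Omega=\{w>0\}$.

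To prove $R\le R_\Omega$, I pass to the limit in the energy identity $\int f R_{\Omega_n}(f)\,dx=\|\nabla R_{\Omega_n}(f)\|_{L^2}^2$: the left-hand side converges to $\int fRf\,dx$ by strong $L^2$ convergence, while weak $H^1$ lower semicontinuity of the Dirichlet energy on the right gives $\int|\nabla Rf|^2\,dx\le\int fRf\,dx$. Since $Rf\in\widetilde{H}^1_0(\Omega)$ it is an admissible competitor in the variational characterization
$$\int_{\R^d} f R_\Omega(f)\,dx=\max_{u\in\widetilde{H}^1_0(\Omega)}\Bigl(2\int_{\R^d} fu\,dx-\int_{\R^d}|\nabla u|^2\,dx\Bigr),$$
which immediately yields $\int fR_\Omega f\,dx\ge 2\int fRf\,dx-\int|\nabla Rf|^2\,dx\ge\int fRf\,dx$, as required. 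Self-adjointness of $R$ is inherited from $\int gR_{\Omega_n}f\,dx=\int f R_{\Omega_n}g\,dx$ by taking limits, and compactness follows from the Dirichlet bound together with the tightness furnished by $|Rf|\le\|f\|_\infty w$ (noting that $|\{w>0\}|\le\liminf_n|\Omega_n|<\infty$, so $Rf$ is essentially supported on a set of finite measure decaying at infinity). The main obstacle is identifying the correct support of the limit, namely $Rf\in\widetilde{H}^1_0(\Omega)$ with $\Omega=\{w>0\}$: without the pointwise bound by $w_{\Omega_n}$ and the assumed strong $L^2$ convergence of these potentials, one has no mechanism to force $Rf$ to vanish on the complement of $\{w>0\}$, and the short variational comparison with $R_\Omega$ hinges entirely on this support property.
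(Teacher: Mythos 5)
The paper does not actually prove this proposition; it simply cites \cite{buc00}, so your argument has to stand on its own. Most of it does: the uniform $H^1$ bound, the comparison $|R_{\Omega_n}(f)|\le\|f\|_\infty w_{\Omega_n}$ via the weak maximum principle, the identification of the support of the limit through $|Rf|\le\|f\|_\infty w$, and the final variational comparison (combining $\int|\nabla Rf|^2\,dx\le\liminf_n\int|\nabla R_{\Omega_n}(f)|^2\,dx=\int fRf\,dx$ with the admissibility of $Rf$ in the dual formulation of $\widetilde E$) are all correct, and that last step is a genuinely clean way to obtain $R\le R_\Omega$. Two smaller points should be tightened. First, for the compactness of $R$ and for the membership $Rf\in\widetilde H^1_0(\Omega)$ when $f\in L^2$ is not bounded, the $L^\infty$ domination is unavailable, since $\|f\|_\infty$ is not controlled by $\|f\|_{L^2}$; you need to truncate $f$ and then use that $Rf$ lies in $H^1(\R^d)$, vanishes a.e.\ outside the finite-measure set $\Omega$, and has Dirichlet energy bounded by $C\|f\|_{L^2}^2$, so that Sobolev's inequality gives the uniform tail bound $\int_{B_\rho^c}|Rf|^2\,dx\le C\|\nabla Rf\|_{L^2}^2\,|\Omega\setminus B_\rho|^{2/d}\to0$. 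You gesture at this in the parenthetical, but as written the tightness of the image of the unit ball is not justified.

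The one genuine gap is that the proposition asserts convergence of the \emph{whole} sequence $R_{\Omega_n}$, while your Cantor diagonal extraction only produces a convergent subsequence. To conclude you must show that the subsequential limit does not depend on the subsequence. This is not automatic from your construction: it amounts to the fact that the strong $L^2$ limit $w$ of the torsion functions determines the limit resolvent, i.e.\ the injectivity of the map $\mu\mapsto w_\mu$ on capacitary measures (in general $R$ is the resolvent of a relaxed domain $-\Delta+\mu$, not of a set), and this is precisely where the $\gamma$-convergence machinery enters Bucur's proof in \cite{buc00}. For the way the proposition is used here (inside Theorem \ref{cc}, which is itself an ``up to a subsequence'' statement) your subsequential version suffices, but as a proof of the statement as written the argument is incomplete without this uniqueness step.
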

\begin{proof}
See \cite[Proposition 3.3]{buc00}.
\end{proof}

\begin{prop}\label{lemma36}
Suppose that $\Omega_1$ and $\Omega_2$ are two sets of finite measure in $\R^d$ such that $\Omega_1\subset\Omega_2$. Then, there are a positive constant $C$ and a real number $\theta\in(0,1)$, depending on the measure of $\Omega_2$ and the dimension $d$, such that 
$$\|R_{\Omega_1}-R_{\Omega_2}\|_{\mathcal{L}(L^2(\R^d))}\le C\|w_{\Omega_1}-w_{\Omega_2}\|^\theta
_{L^2(\R^d)}$$
\end{prop}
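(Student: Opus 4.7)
The plan is to combine a pointwise comparison principle with a Sobolev-type gain of integrability and a Riesz--Thorin interpolation argument. Set $A := R_{\Omega_2} - R_{\Omega_1}$ and $v := w_{\Omega_2} - w_{\Omega_1}$; the inclusion $\widetilde H^1_0(\Omega_1) \subset \widetilde H^1_0(\Omega_2)$ and the weak maximum principle force $A \ge 0$ (as operator) and $v \ge 0$ (pointwise).

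First I would establish the pointwise comparison $0 \le A(f) \le \|f\|_{L^\infty} v$ for every nonnegative $f \in L^\infty(\R^d)$. Indeed, applying the monotonicity $R_{\Omega_1}(g) \le R_{\Omega_2}(g)$ (valid for $g \ge 0$) to $g = \|f\|_{L^\infty} - f$ yields the upper bound upon rearrangement, while the lower bound is the same monotonicity applied to $f$ itself. Splitting a signed $f \in L^\infty$ into positive and negative parts gives $|A(f)| \le 2\|f\|_{L^\infty} v$ pointwise, and integrating in $L^2$ produces
$$\|A\|_{L^\infty(\R^d) \to L^2(\R^d)} \le 2\|v\|_{L^2(\R^d)}.$$
Since $A$ has a symmetric kernel, duality also gives $\|A\|_{L^2 \to L^1} \le 2\|v\|_{L^2}$.

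Next I would extract regularity from the resolvent via Sobolev's inequality. For any $\Omega \subset \Omega_2$ of finite measure, testing the equation $-\Delta R_\Omega(f) = f$ against $R_\Omega(f) \in \widetilde H^1_0(\Omega)$, using the embedding $H^1(\R^d) \hookrightarrow L^{2^*}(\R^d)$ (with $2^* = 2d/(d-2)$ in dimension $d \ge 3$), and applying H\"older's inequality (exploiting that $R_\Omega(f)$ is supported in $\Omega \subset \Omega_2$) yields
$$\|R_\Omega(f)\|_{L^{2^*}(\R^d)} \le C(d,|\Omega_2|)\|f\|_{L^2(\R^d)}.$$
Consequently $\|A\|_{L^2 \to L^{2^*}} \le 2C(d,|\Omega_2|)$, and by duality $\|A\|_{L^{(2^*)'} \to L^2}$ is bounded by the same constant.

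Finally, Riesz--Thorin interpolation with fixed codomain $L^2$, between the endpoints $A : L^{(2^*)'} \to L^2$ and $A : L^\infty \to L^2$, gives at the parameter $\theta = 2/(d+2)$ --- which is precisely the value making the interpolated domain equal to $L^2$ --- the estimate
$$\|A\|_{L^2 \to L^2} \le \bigl(2C(d,|\Omega_2|)\bigr)^{1-\theta}\bigl(2\|v\|_{L^2}\bigr)^{\theta},$$
which is the claim with $\theta = 2/(d+2) \in (0,1)$. The low-dimensional cases $d = 1,2$ are handled by the same scheme with $2^*$ replaced by any fixed exponent $q > 2$ and the corresponding Sobolev embedding; only the numerical value of $\theta$ changes. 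The main obstacle is justifying that the monotonicity $R_{\Omega_1}(g) \le R_{\Omega_2}(g)$ and hence the pointwise comparison of Step 1 go through in the Sobolev-like setting of $\widetilde H^1_0$, which reduces to the weak maximum principle already recorded in Remark \ref{propenfun}.
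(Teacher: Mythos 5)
Your argument is correct. Note first that the paper does not actually prove this proposition: it simply cites \cite[Lemma 3.6]{buc00}, so what you have produced is a self-contained reconstruction of (essentially) Bucur's original argument. The two pillars are exactly the right ones: the pointwise domination $0\le R_{\Omega_2}(f)-R_{\Omega_1}(f)\le \|f\|_{L^\infty}\,(w_{\Omega_2}-w_{\Omega_1})$ obtained by applying domain monotonicity to $\|f\|_{L^\infty}-f$ (legitimate even though the constant is not in $L^2(\R^d)$, since $|\Omega_2|<\infty$ and the weak formulation only sees the datum on $\Omega_2$, so $R_\Omega(\|f\|_{L^\infty})=\|f\|_{L^\infty}w_\Omega$ by linearity), and the uniform bound $\|R_\Omega\|_{L^2\to L^{2^*}}\le C(d,|\Omega_2|)$ from Sobolev. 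Your closing of the gap via Riesz--Thorin between $L^{(2^*)'}\to L^2$ and $L^\infty\to L^2$, landing on $L^2\to L^2$ at $\theta=2/(d+2)$, is a clean packaging of what is often done in the literature by a truncation $f=f\mathbf 1_{\{|f|\le M\}}+f\mathbf 1_{\{|f|>M\}}$ followed by optimization in $M$; both yield the same power-type modulus. The one point you rightly flag --- the validity of the comparison principle in the $\widetilde H^1_0$ setting --- does go through: for $g\ge 0$ and $u_i=R_{\Omega_i}(g)$ one checks that $(u_1-u_2)_+\in\widetilde H^1_0(\Omega_1)$ (it vanishes a.e.\ on $\Omega_1^c$ because $u_1=0$ and $u_2\ge0$ there) and is admissible as a test function for both equations, giving $\int|\nabla(u_1-u_2)_+|^2=0$; this is the same mechanism as the weak maximum principle recorded in Remark \ref{propenfun}. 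To be fully rigorous you should run the interpolation on simple functions with finite-measure support (dense in every $L^p$ involved) and extend by density, since $A$ is not a priori defined on all of $L^\infty(\R^d)$; this is routine. The auxiliary bound $\|A\|_{L^2\to L^1}\le 2\|v\|_{L^2}$ is not needed for your chosen pair of endpoints and can be dropped.
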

\begin{proof}
See \cite[Lemma 3.6]{buc00}.
\end{proof}




\section{Existence of generalized solutions}\label{ex}
In this section, we prove that for every $k\in\N$ and every $c>0$ the problem 
\begin{equation}\label{lbk}
\min\left\{\widetilde\lambda_k(\Omega):\ \Omega\subset\R^d,\ \Omega\ \hbox{measurable},\ P(\Omega)= c,\ |\Omega|<\infty\right\},
\end{equation} 
has a solution (see Theorem \ref{th1}). More generally, in Theorem \ref{thfper} we prove that there is a solution of 
\begin{equation}\label{lbfper}
\min\left\{f\left(\widetilde\lambda_{k_1}(\Omega),\dots,\widetilde\lambda_{k_p}(\Omega)\right):\ \Omega\subset\R^d,\ \Omega\ \hbox{measurable},\ P(\Omega)= c,\ |\Omega|<\infty\right\},
\end{equation} 
where $f$ is an increasing function satisfying \((f1), (f2), (f3)\). We will use a combination of a concentration-compactness principle and an induction argument, as in \cite{bulbk}. In order to deal with the \emph{dichotomy} case, we will show, on each step of the induction, that the optimal sets are  bounded. The following theorem is a straightforward adaptation of \cite[Theorem 2.2]{buc00} and already appeared in \cite{bubuhe}. We report the detailed proof in the Appendix for the sake of completeness.

\begin{teo}\label{cc}
Suppose that $\Omega_n\subset\R^d$ is a sequence of measurable sets of finite measure and uniformly bounded perimeter. Then, up to a subsequence, one of the following three situations occurs:
\begin{enumerate}[(i)]
\item\emph{Compactness:} There is a sequence $y_n\in\R^d$, a measurable set $\Omega\subset\R^d$ and a bounded self-adjoint operator $R:L^2(\R^d)\to L^2(\R^d)$, such that $\1_{y_n+\Omega_n}\to \1_{\Omega}$ in $L^1(\R^d)$, $R_{y_n+\Omega_n}\to R$ in $\mathcal{L}(L^2(\R^d))$, and $R\le R_\Omega$.
\item\emph{Dichotomy:} There are sequences of measurable sets $A_n\subset\R^d$ and $B_n\subset\R^d$ such that:
\begin{enumerate}[(a)]
\item $A_n\cup B_n\subset\Omega_n$;
\item $d(A_n,B_n)\to\infty$, as $n\to\infty$;
\item $\liminf_{n\to\infty}|A_n|>0$ and $\liminf_{n\to\infty}|B_n|>0$;
\item $\limsup_{n\to\infty}\left(P(A_n)+P(B_n)-P(\Omega_n)\right)\le0$.
\item $\lim_{n\to\infty}\|R_{A_n\cup B_n}-R_{\Omega_n}\|_{\mathcal{L}(L^2(\R^d))}=0$.
\end{enumerate} 
\item\emph{Vanishing:} For every $\eps>0$ and every  $R>0$ there exists $N\in\N$ such that 
$$\sup_{n\ge N}\sup_{x\in\R^d}|\Omega_n\cap B_R(x)|\le\eps.$$
Moreover,  $\|R_{\Omega_n}\|_{\mathcal{L}(L^2(\R^d))}\to 0$ as $n\to\infty$.
\end{enumerate}
\end{teo}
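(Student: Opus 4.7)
My plan is to run a Lions-type concentration--compactness argument adapted to the BV setting. Let $Q_n(t):=\sup_{x\in\R^d}|\Omega_n\cap B_t(x)|$; each $Q_n$ is non-decreasing in $t$ and bounded by $\sup_n|\Omega_n|<\infty$, so Helly's selection theorem gives a subsequence along which $Q_n\to Q$ pointwise to a monotone limit. Set $\alpha:=\lim_{t\to\infty}Q(t)$ and, along a further subsequence, $M:=\lim|\Omega_n|$. The three alternatives of the theorem correspond to $\alpha=0$ (vanishing), $\alpha=M$ (compactness), and $0<\alpha<M$ (dichotomy); the resolvent statements will in every case be tied back to the torsion functions $w_{\Omega_n}$ via Propositions \ref{prop33}--\ref{lemma36}.

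In the vanishing case the measure statement in (iii) is immediate from $Q_n(R)\to Q(R)\le\alpha=0$. The norm decay $\|R_{\Omega_n}\|_{\mathcal{L}(L^2)}=\widetilde\lambda_1(\Omega_n)^{-1}\to 0$ follows from the bound
$$\widetilde\lambda_1(\Omega)\ge c_d\Bigl(\sup_{x\in\R^d}|\Omega\cap B_R(x)|\Bigr)^{-2/d},$$
which I would obtain by covering $\R^d$ by balls of radius $R$ with bounded overlap, applying the Sobolev--Hölder estimate $\|u\|^2_{L^2(B_R(x))}\le C_d|\Omega\cap B_R(x)|^{2/d}\|\nabla u\|^2_{L^2(B_R(x))}$ on each ball, and summing. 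In the compactness case I pick centers $y_n$ approximately realizing $Q_n(t_n)$ for some $t_n\to\infty$ and translate so that $y_n=0$; the equality $\alpha=M$ then forces tightness, i.e.\ $|\Omega_n\setminus B_R|\to 0$ as $R\to\infty$ uniformly in $n$. Combined with the uniform BV bound on $\1_{\Omega_n}$, this yields $\1_{\Omega_n}\to\1_\Omega$ in $L^1(\R^d)$, and the uniform $H^1\cap L^\infty$ bounds of Remark \ref{propenfun} give $w_{\Omega_n}\to w$ in $L^2(\R^d)$ along a subsequence; Proposition \ref{prop33} then supplies the operator $R$ with $R\le R_\Omega$.

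The dichotomy case is the main difficulty. After translation, choose $R_n<R_n'$ with $R_n'-R_n\to\infty$ such that both $|\Omega_n\cap B_{R_n}|$ and $|\Omega_n\cap B_{R_n'}|$ converge to $\alpha$, so that $|\Omega_n\cap(B_{R_n'}\setminus B_{R_n})|\to 0$. Splitting the annulus into thirds and averaging the co-area identity $\int_a^b\H^{d-1}(\Omega_n\cap\partial B_r)\,dr=|\Omega_n\cap\{a<|x|<b\}|$ on the first and last third, extract radii $r_n,r_n'$ with $r_n'-r_n\ge(R_n'-R_n)/3\to\infty$ and $\H^{d-1}(\Omega_n\cap\partial B_{r_n})+\H^{d-1}(\Omega_n\cap\partial B_{r_n'})\to 0$. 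Setting $A_n:=\Omega_n\cap B_{r_n}$ and $B_n:=\Omega_n\setminus\overline{B_{r_n'}}$ yields (a)--(c) directly and
$$P(A_n)+P(B_n)\le P(\Omega_n;B_{r_n})+P(\Omega_n;\R^d\setminus\overline{B_{r_n'}})+\H^{d-1}(\Omega_n\cap\partial B_{r_n})+\H^{d-1}(\Omega_n\cap\partial B_{r_n'})\le P(\Omega_n)+o(1),$$
which is (d). Property (e) reduces, via Proposition \ref{lemma36} applied to $A_n\cup B_n\subset\Omega_n$, to checking $\|w_{A_n\cup B_n}-w_{\Omega_n}\|_{L^2}\to 0$. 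The uniform $L^\infty$ bound of Remark \ref{propenfun} turns this into an $L^1$ bound on the non-negative function $w_{\Omega_n}-w_{A_n\cup B_n}$, which I would obtain by testing the minimization problem defining $\widetilde E(A_n\cup B_n)$ against a truncation of $w_{\Omega_n}$ by a Lipschitz partition of unity supported away from the vanishing annulus, and controlling the cross-terms by means of the small spherical slices selected above. This last step is precisely where I expect the main technical obstacle: producing admissible competitors in $\widetilde H^1_0(A_n\cup B_n)$ close to $w_{\Omega_n}$ while keeping the cutoff error asymptotically negligible.
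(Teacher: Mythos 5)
Your overall architecture -- concentration functions $Q_n$, the trichotomy according to $\alpha=\lim_{r\to\infty}Q(r)$, tightness plus $BV/H^1$ compactness and Proposition \ref{prop33} in the compactness case, and the two-radii annular splitting giving (a)--(d) of the dichotomy -- coincides with the paper's. In the vanishing case you replace the paper's use of Lieb's lemma by a covering argument; this is a legitimate and standard alternative, but the local inequality you invoke, $\|u\|^2_{L^2(B_R(x))}\le C_d|\Omega\cap B_R(x)|^{2/d}\|\nabla u\|^2_{L^2(B_R(x))}$, is false as written (a function of $\widetilde H^1_0(\Omega)$ need not vanish on $\partial B_R(x)$: take $u$ nearly constant on a ball contained in $\Omega$), and summing local $L^{2^*}$ norms over a cover goes the wrong way since the exponent $2/2^*$ is below one. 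The correct implementation takes a partition of unity with $\sum_i\chi_i^2=1$ and $\sum_i|\nabla\chi_i|^2\le CR^{-2}$, applies Sobolev--H\"older to the compactly supported functions $u\chi_i$, and absorbs the resulting extra term $CR^{-2}\int u^2$; with that fix your route is fine and genuinely bypasses Lieb's lemma.

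The genuine gap is item (e) of the dichotomy, which you yourself leave open as ``the main technical obstacle,'' and whose proposed mechanism is off target: the radii $r_n,r_n'$ with $\H^{d-1}(\Omega_n\cap\partial B_{r_n})+\H^{d-1}(\Omega_n\cap\partial B_{r_n'})\to0$ serve only the perimeter estimate (d) and do not control the gradient cross-terms of any cutoff. What closes the step is the following. Write $U_n=A_n\cup B_n$ and use that $w_{U_n}$ is the orthogonal projection of $w_{\Omega_n}$ onto $\widetilde H^1_0(U_n)$ for the Dirichlet scalar product, so that $\int|\nabla(w_{\Omega_n}-w_{U_n})|^2\,dx\le\int|\nabla(w_{\Omega_n}-v)|^2\,dx$ for every competitor $v\in\widetilde H^1_0(U_n)$. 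Take $v=u_n^1+u_n^2$ with $u_n^1=\phi_n w_{\Omega_n}$ and $u_n^2=(1-\psi_n)w_{\Omega_n}$, where the cutoffs $\phi_n,\psi_n$ transition across annuli of width comparable to $r_n'-r_n\to\infty$. Then $|\nabla\phi_n|+|\nabla\psi_n|=O\big((r_n'-r_n)^{-1}\big)$, so the first-order errors are $O\big((r_n'-r_n)^{-1}\big)$ by the uniform bounds of Remark \ref{propenfun}, while the zeroth-order error is bounded by $\|w_{\Omega_n}\|_{L^\infty}\,|\Omega_n\cap(B_{R_n'}\setminus B_{R_n})|\to0$; expanding the square and using $-\Delta w_{\Omega_n}=1$ gives $\|\nabla(w_{\Omega_n}-w_{U_n})\|_{L^2}\to0$, hence $\|w_{\Omega_n}-w_{U_n}\|_{L^2}\to0$, and Proposition \ref{lemma36} yields (e). Your variant -- bounding $\|w_{\Omega_n}-w_{U_n}\|_{L^1}=2\big(\widetilde E(U_n)-\widetilde E(\Omega_n)\big)$ by the energy of the same competitor and upgrading to $L^2$ via the $L^\infty$ bound -- is equally viable, but only once the cross-terms are controlled by the width of the transition annulus rather than by the small spherical slices.
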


\begin{oss}\label{simple}Notice that if, in the previous theorem, we assume 

\begin{equation}\label{sesto}
\sup_{n\in \N} \widetilde \lambda_1(\Omega_n)<+\infty,
\end{equation}
 then the \emph{vanishing} cannot occur. Indeed, due to the equality
\[
\|R_{\Omega_n}\|_{\mathcal{L}(L^2(\R^d))}=\frac{1}{\widetilde \lambda_1(\Omega_n)},
\]
the sequence of resolvents does not converge to zero in norm.
\end{oss}

\begin{oss}\label{simple2}If the \emph{compactness} occurs for the sequence $\Omega_n\subset\R^d$, then we have 
$$P(\Omega)\le\liminf_{n\to\infty}P(\Omega_n),\qquad \widetilde\lambda_k(\Omega)\le\liminf_{n\to\infty}\widetilde\lambda_k(\Omega_n),$$
for every $k\in\N$.
If the \emph{dichotomy} occurs, then we have 
$$\liminf_{n\to\infty}P(A_n\cup B_n)\le\liminf_{n\to\infty}P(\Omega_n),\qquad \liminf_{n\to\infty}\widetilde\lambda_k(A_n\cup B_n)\le\liminf_{n\to\infty}\widetilde\lambda_k(\Omega_n),$$
for every $k\in\N$.

\end{oss}

%

As we already mentioned in the beginning of this section, the proof of existence of a minimizer of \eqref{lbfper} uses an induction argument which depends on some mild qualitative property of the minimizer (see Theorem \ref{th1} and Theorem \ref{thfper}). Thus, we will prove first a stronger result which states that any eventual solution of \eqref{lbfper} is a bounded set. We introduce the following notion which turns out to be a powerful tool in the analysis of optimal sets (see \cite{bulbk,bubuve} and \cite{bucve} for similar techniques)

\begin{deff}\label{sub}
Let $\Omega\subset\R^d$ be of finite perimeter and finite Lebesgue measure. We say that $\Omega$ is a {\bf local energy subsolution with respect to the perimeter} or simply {\bf energy subsolution} if there are constants $\varepsilon=\varepsilon (\Omega)>0$ and $k=k(\Omega)>0$ such that for each measurable set $U \subset\Omega$ with the property 
\begin{equation}\label{sub0}
\|w_\Omega-w_U\|_{L^2}\le\varepsilon,
\end{equation}
where $w_\Omega$ and $w_U$ are the solutions of \eqref{eq1}, we have 
\begin{equation}\label{sub1}
\widetilde E(\Omega)+kP(\Omega)\le \widetilde E(U)+k P(U).
\end{equation}
\end{deff}

\begin{prop}\label{lag}
Let $f$ be a function satisfying the assumptions $(f1)$, $(f2)$ and $(f3)$. Then any solution $\Omega\subset\R^d$ of \eqref{lbfper} is an energy subsolution.
\end{prop}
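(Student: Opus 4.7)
The natural approach is a rescaling / Lagrange-multiplier argument. Given a candidate $U\subset\Omega$ close to $\Omega$ in the torsion-function sense, I will first build an admissible competitor by a dilation of $U$, then use the optimality of $\Omega$ to derive a comparison inequality for the eigenvalues $\widetilde\lambda_{k_i}(U)$, and finally translate that eigenvalue inequality into the desired energy/perimeter inequality.

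Fix $U\subset\Omega$ with $\|w_\Omega-w_U\|_{L^2}\le\varepsilon$. If $P(U)\ge P(\Omega)$, the inequality in Definition \ref{sub} is immediate from the weak maximum principle $w_U\le w_\Omega$ (which gives $\widetilde E(U)\ge \widetilde E(\Omega)$), for any $k\ge 0$. So I may assume $P(U)<P(\Omega)$ and take as competitor $V:=tU$ with $t:=(P(\Omega)/P(U))^{1/(d-1)}>1$, so that $P(V)=P(\Omega)$, $|V|<\infty$ and $\widetilde\lambda_{k_i}(V)=t^{-2}\widetilde\lambda_{k_i}(U)$. The optimality of $\Omega$ then gives $f(\widetilde\lambda(\Omega))\le f(t^{-2}\widetilde\lambda(U))$. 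For $\varepsilon$ small, Proposition \ref{lemma36} forces the vectors $\widetilde\lambda(U)$ and $\widetilde\lambda(V)$ to lie in a fixed compact neighborhood of $\widetilde\lambda(\Omega)$ in $(0,\infty)^p$, on which the constants $L$ and $a$ in $(f2)$--$(f3)$ are uniform. Applying $(f3)$ to $\widetilde\lambda(U)\ge \widetilde\lambda(V)$ and combining with optimality,
\[
f(\widetilde\lambda(U))-f(\widetilde\lambda(\Omega))\;\ge\;a(1-t^{-2})\sum_i\widetilde\lambda_{k_i}(\Omega),
\]
and the elementary inequality $1-t^{-2}=1-(P(U)/P(\Omega))^{2/(d-1)}\ge c_d(P(\Omega)-P(U))/P(\Omega)$, with $c_d=c_d(d)>0$, promotes this to
\[
f(\widetilde\lambda(U))-f(\widetilde\lambda(\Omega))\;\ge\;C_1[P(\Omega)-P(U)],\qquad C_1=C_1(\Omega)>0.
\]

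The crucial remaining step is to bound $f(\widetilde\lambda(U))-f(\widetilde\lambda(\Omega))$ \emph{from above} by a multiple of $\widetilde E(U)-\widetilde E(\Omega)=\tfrac12\int(w_\Omega-w_U)$. The trick is min-max: denoting by $S_k$ the span of the first $k$ eigenfunctions $u_1^{(\Omega)},\dots,u_k^{(\Omega)}$ of $R_\Omega$, the positivity of $R_\Omega-R_U$ and the trace-vs.-top-eigenvalue bound for its restriction to $S_k$ yield
\[
\Lambda_k(\Omega)-\Lambda_k(U)\;\le\;\max_{v\in S_k,\,\|v\|_{L^2}=1}\langle(R_\Omega-R_U)v,v\rangle\;\le\;\sum_{j=1}^k\langle(R_\Omega-R_U)u_j^{(\Omega)},u_j^{(\Omega)}\rangle.
\]
Since $R_\Omega-R_U$ has the non-negative Green-function kernel $G_\Omega-G_U$ and the eigenfunctions $u_j^{(\Omega)}$ lie in $L^\infty(\R^d)$ (by elliptic regularity and a Talenti-type bound as for $w_\Omega$ in Remark \ref{propenfun}),
\[
\langle(R_\Omega-R_U)u_j^{(\Omega)},u_j^{(\Omega)}\rangle\;\le\;\|u_j^{(\Omega)}\|_{L^\infty}^{\,2}\,\langle(R_\Omega-R_U)1,1\rangle\;=\;2\|u_j^{(\Omega)}\|_{L^\infty}^{\,2}\,[\widetilde E(U)-\widetilde E(\Omega)].
\]
Passing from $\Lambda_{k_i}$ to $\widetilde\lambda_{k_i}=1/\Lambda_{k_i}$ (using $\Lambda_{k_i}(U)\ge\tfrac12\Lambda_{k_i}(\Omega)$ for $\varepsilon$ small), summing over $i$, and applying $(f2)$ gives $f(\widetilde\lambda(U))-f(\widetilde\lambda(\Omega))\le C_2[\widetilde E(U)-\widetilde E(\Omega)]$ for some $C_2=C_2(\Omega)>0$. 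Comparing this with the lower bound above produces
\[
\widetilde E(\Omega)+kP(\Omega)\;\le\;\widetilde E(U)+kP(U)\qquad\text{with}\qquad k:=C_1/C_2>0,
\]
which is the energy-subsolution property.

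The part I expect to be the main obstacle is precisely this upper bound on $f(\widetilde\lambda(U))-f(\widetilde\lambda(\Omega))$ in terms of the energy. The naive route via Proposition \ref{lemma36} would only give $\sum_i(\widetilde\lambda_{k_i}(U)-\widetilde\lambda_{k_i}(\Omega))\lesssim\|w_\Omega-w_U\|_{L^2}^{\theta}$ with $\theta<1$, which, combined with the interpolation $\|w_\Omega-w_U\|_{L^2}\lesssim[\widetilde E(U)-\widetilde E(\Omega)]^{1/2}$, is too weak to produce a linear bound in $P(\Omega)-P(U)$. The min-max argument using the $L^\infty$-bound on the first few eigenfunctions of $\Omega$ is exactly the refinement that gives a linear estimate and thereby closes the proof.
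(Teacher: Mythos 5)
Your proof is correct and follows the same skeleton as the paper's: rescale $U$ to $tU$ with $t=(P(\Omega)/P(U))^{1/(d-1)}$, use the optimality of $\Omega$ together with $(f3)$ and the elementary concavity estimate on $s\mapsto s^{2/(d-1)}$ to get $P(\Omega)-P(U)\le C\,\big(f(\widetilde\lambda(U))-f(\widetilde\lambda(\Omega))\big)$, and then control the right-hand side linearly by $\widetilde E(U)-\widetilde E(\Omega)$. The only substantive difference is in that last step: the paper simply invokes \cite[Lemma 4.1]{bulbk} for the bound $\widetilde\lambda_{k_j}(U)-\widetilde\lambda_{k_j}(\Omega)\le C\widetilde\lambda_{k_j}(U)\big(\widetilde E(U)-\widetilde E(\Omega)\big)$, whereas you prove the equivalent estimate inline via the min--max principle on the span of the first $k$ eigenfunctions of $R_\Omega$, the trace bound for the positive operator $R_\Omega-R_U$ restricted to that span, the nonnegativity of its kernel $G_\Omega-G_U$, and the $L^\infty$ bound on eigenfunctions --- which is essentially the standard proof of the cited lemma, so you have made the argument self-contained at no real cost. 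You are right that the crude route through Proposition \ref{lemma36} (which only gives a H\"older-type bound $\lesssim\|w_\Omega-w_U\|_{L^2}^\theta$) would be too weak here; Proposition \ref{lemma36} is still needed, as in the paper, only to keep $\widetilde\lambda(U)$ in a fixed compact set where the constants $L$ and $a$ of $(f2)$--$(f3)$ and the bound $\Lambda_{k_i}(U)\ge\tfrac12\Lambda_{k_i}(\Omega)$ are uniform. One small point worth making explicit: to keep $\widetilde\lambda(tU)$ away from $0$ (so that $(f3)$ applies with a uniform $a$) you should observe that $\|w_\Omega-w_U\|_{L^2}\le\eps$ with $\eps$ small forces $|U|$, hence $P(U)$, to be bounded below, so that $t$ is bounded above; the paper glosses over the same point.
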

\begin{proof}
Let $U\subset\Omega$ and $t=\left(P(\Omega)/P(U)\right)^{1/(d-1)}$. Suppose that $t>1$, i.e. $P(U)<P(\Omega)$. By the optimality of $\Omega$, properties \((f2), (f3)\), the trivial scaling properties  of the eigenvalues and of the perimeter and the monotonicty of eigenvalues with respect to set inclusion, we obtain
\begin{equation*}
\begin{split}
0&\le f\left(\widetilde\lambda_{k_1}(tU),\dots,\widetilde\lambda_{k_p}(tU)\right)-f\left(\widetilde\lambda_{k_1}(\Omega),\dots,\widetilde\lambda_{k_p}(\Omega)\right)\\
&=f\left(\widetilde\lambda_{k_1}(tU),\dots,\widetilde\lambda_{k_p}(tU)\right)-f\left(\widetilde\lambda_{k_1}(U),\dots,\widetilde\lambda_{k_p}(U)\right)\\
&+f\left(\widetilde\lambda_{k_1}(U),\dots,\widetilde\lambda_{k_p}(U)\right)-f\left(\widetilde\lambda_{k_1}(\Omega),\dots,\widetilde\lambda_{k_p}(\Omega)\right)\\
&\le \frac{a}{p}\big(P(\Omega)\big)^{-\frac{2}{d-1}}\left(\sum_{j=1}^p\widetilde\lambda_{k_j}(U)\right)\left(P(U)^{\frac{2}{d-1}}-P(\Omega)^{\frac{2}{d-1}}\right)\\
&+L\sum_{j=1}^p\left(\widetilde\lambda_{k_j}(U)-
\widetilde\lambda_{k_j}(\Omega)\right)
\end{split}
\end{equation*}
where $L$ is the (local) Lipschitz constant of $f$ and $a$ is the constant from $(f3)$.  Using the concavity of the function \(z\mapsto z^{\frac 
{2} {d-1}}\) if \(d\ge 3\), or the fact that \(P(U)<P(\Omega)\) if \(d=2\), we can bound
\[
P(U)^{\frac{2}{d-1}}-P(\Omega)^{\frac{2}{d-1}}\le C(\Omega)\left(P(U)-P(\Omega)\right).
\]
By Proposition \ref{lemma36}, we have the estimate
\begin{equation}\label{lagper1}
0\le \widetilde\lambda_{k_j}(U)-\widetilde\lambda_{k_j}(\Omega)\le C\|w_\Omega-w_U\|^\theta_{L^2},\ \forall j=1,\dots,p.
\end{equation}
Thus, there are constants $\Lambda(\Omega)>0$ and $\eps(\Omega)>0$ such that for each $U\subset\Omega$ with the property 
$$P(U)<P(\Omega), \qquad \|w_\Omega-w_U\|_{L^2}\le\varepsilon,$$
we have 
\begin{equation}\label{maldipancia}
\sum_{j=1}^p\widetilde\lambda_{k_j}(\Omega)+\Lambda P(\Omega)\le \sum_{j=1}^p\widetilde\lambda_{k_j}(U)+\Lambda P(U).
\end{equation}
Moreover, thanks to the monotonicity of the eigenavlues by set inclusion, the above inequality trivially holds also if \(P(\Omega)\le P(U)\).

 We are now in the  position to prove that any solution $\Omega$ of \eqref{lbfper} is an energy subsolution. Indeed, by  \cite[Lemma 4.1]{bulbk}, for every  $U\subset\Omega$ we have
\[
 \widetilde\lambda_{k_j}(U)-\widetilde\lambda_{k_j}(\Omega)\le C \widetilde\lambda_{k_j}(U)\left(\widetilde E(U)-\widetilde E(\Omega)\right),
\]   
which together with    \eqref{maldipancia} (which holds for every \(U\subset \Omega\) satisfying \eqref{sub0}) and the monotonicity of the eigenavalues by set inclusion,  give
$$\Lambda\left(P(\Omega)-P(U)\right)\le \widetilde\lambda_{k_j}(U)-\widetilde\lambda_{k_j}(\Omega)\le C \widetilde\lambda_{k_j}(U)\left(\widetilde E(U)-\widetilde E(\Omega)\right),$$
where $C$ is a constant depending on $\Omega$ and $k_p$. Using again \eqref{lagper1}, we see that all the  $\widetilde\lambda_{k_j}(U)$, for $j=1,\dots,p$, remain bounded by a constant depending only on \(\Omega\) as $E(U)-E(\Omega)\le\eps$, concluding the proof.
\end{proof}

\begin{oss}
In the case $p=1$ and $f(x)=x$, the proof of Proposition \ref{bounded} can be simplified due to the following fact: \emph{There exists a positive constant $\Lambda>0$ such that any solution $\Omega\subset\R^d$ of \eqref{lbk} is also a solution of the problem}
\begin{equation}\label{lbklag}
\min\left\{\widetilde\lambda_k(\Omega)+\Lambda P(\Omega):\ \Omega\subset\R^d,\ \Omega\ \hbox{measurable},\ |\Omega|<+\infty\right\}.
\end{equation}
In order to prove that, we first notice that, by the scaling properties of $\lambda_k$  and of  the perimeter, $\Omega\subset\R^d$ is a solution of \eqref{lbklag}, if and only if, the following two conditions are satisfied:
\begin{enumerate}[(1)]
\item $\Omega$ is a solution of \eqref{lbk} with $c=P(\Omega)$.
\item The function $F(t)=\lambda_k(t\Omega)+\Lambda P(t\Omega)$, defined on the positive real numbers, achieves its minimum in $t=1$.  
\end{enumerate}
Thus, if $\Omega$ is a solution of \eqref{lbk}, then it is sufficient to choose $\Lambda>0$ such that the derivative 
$$F'(t)=-2\lambda_k(\Omega)t^{-3}+(d-1)\Lambda P(\Omega)t^{d-2},$$
vanishes in $t=1$, i.e.
\begin{equation}\label{lag1}
\Lambda=\frac{2\lambda_k(\Omega)}{(d-1)P(\Omega)}.
\end{equation}
\end{oss}

\begin{lemma}\label{bounded}
Let $\Omega$ be an energy subsolution. Then $\Omega$ is a bounded set.
\end{lemma}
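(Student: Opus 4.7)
The plan is to argue by contradiction: suppose $\Omega$ is unbounded, so that the tail mass $m(r):=|\Omega\setminus B_r(0)|$ is strictly positive for every $r>0$ while $m(r)\to 0$ as $r\to\infty$ (since $|\Omega|<+\infty$). I will derive a differential inequality for $m$ whose principal part is of super-critical extinction type, and then conclude that $m$ must vanish in finite time, yielding the sought contradiction.

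The natural comparison set is $U_r:=\Omega\cap B_r$: since $w_{U_r}\to w_\Omega$ in $L^2$ as $r\to\infty$, for $r\ge R_0$ large enough one has $\|w_\Omega-w_{U_r}\|_{L^2}\le\varepsilon(\Omega)$, so the energy subsolution property yields
\[
k\bigl[P(\Omega)-P(U_r)\bigr]\le \widetilde E(U_r)-\widetilde E(\Omega).
\]
For the energy side I would use the Lipschitz cutoff $\eta_r$ equal to $1$ on $B_{r-1}$, vanishing outside $B_r$, with $|\nabla\eta_r|\le 1$; since $\eta_r w_\Omega\in\widetilde H^1_0(U_r)$, testing the equation $-\Delta w_\Omega=1$ against $\eta_r^2 w_\Omega$ to eliminate the cross term in the expansion of $|\nabla(\eta_r w_\Omega)|^2$ leads to
\[
\widetilde E(U_r)-\widetilde E(\Omega)\le \tfrac12\int(1-\eta_r)^2 w_\Omega+\tfrac12\int w_\Omega^2|\nabla\eta_r|^2 \le C(\Omega)\,m(r-1),
\]
using $\|w_\Omega\|_\infty\le C_d|\Omega|^{2/d}$ from Remark \ref{propenfun}. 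On the perimeter side, the standard decomposition $P(\Omega)-P(U_r)=P(\Omega;B_r^c)-\H^{d-1}(\Omega\cap\partial B_r)$, together with the isoperimetric inequality applied to $\Omega\setminus B_r$ and the coarea identity $\H^{d-1}(\Omega\cap\partial B_r)=-m'(r)$, combines with the subsolution estimate to yield, for a.e.\ $r\ge R_0$,
\[
c_d\,m(r)^{(d-1)/d}\;\le\;\Lambda\,m(r-1)\;-\;2m'(r),
\]
for a constant $\Lambda=\Lambda(\Omega,k)>0$.

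The hard part will be the extinction analysis of this delay differential inequality. Since the exponent $(d-1)/d$ is strictly less than $1$, the principal part alone corresponds to an ODE that extinguishes in finite time, but the lower-order delay term $\Lambda m(r-1)$ must be absorbed. I would split $r\ge R_0$ into a \emph{regular} regime, where $\Lambda m(r-1)\le(c_d/2)\,m(r)^{(d-1)/d}$ so that $-m'(r)\ge (c_d/4)\,m(r)^{(d-1)/d}$ and direct integration gives extinction within time $\sim m(R_0)^{1/d}$, and a \emph{contractive} regime, where $m(r)\le K\,m(r-1)^{d/(d-1)}$ represents a strong drop over unit intervals. Using that $m(r-1)\to 0$, a bootstrap with $R_0$ chosen sufficiently large shows that one of the two mechanisms must ultimately force $m(r_1)=0$ for some finite $r_1$, contradicting the assumption that $\Omega$ is unbounded.
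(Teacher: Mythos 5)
Your setup is sound and runs parallel to the paper's: the comparison with $U_r=\Omega\cap B_r$, the identity $\widetilde E(U_r)-\widetilde E(\Omega)\le\frac12\int(1-\eta_r)^2w_\Omega+\frac12\int w_\Omega^2|\nabla\eta_r|^2$, and the perimeter/isoperimetric/coarea bookkeeping are all correct. The gap is in the final step: the delay differential inequality
\[
c_d\,m(r)^{\frac{d-1}{d}}\le \Lambda\,m(r-1)-2m'(r)
\]
does \emph{not} force finite-time extinction, and your dichotomy does not close. The ``contractive regime'' $m(r)\le K\,m(r-1)^{d/(d-1)}$ can persist for all large $r$: for instance $m(r)=\exp(-e^{\beta r})$ with $e^{-\beta}<\frac{d-1}{d}$ is decreasing, tends to $0$, never vanishes, and satisfies the displayed inequality for all large $r$ through the term $\Lambda m(r-1)$ alone (since $\exp(-e^{-\beta}e^{\beta r})\gg\exp(-\frac{d-1}{d}e^{\beta r})$), so the ``regular regime'' need never occur and no contradiction is reached. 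The delay term $m(r-1)$ is fatal precisely because $m(r-1)$ may be enormously larger than $m(r)^{(d-1)/d}$ when $m$ decays very fast.

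The paper avoids the delay by replacing the crude cutoff $\eta_r$ with the barrier $v_t$, a truncated parabola satisfying $-\Delta v_t=1$. Comparing $w_\Omega$ with $w_\Omega\wedge v_t$ and integrating by parts, the bulk terms over the transition slab cancel exactly against the source term, leaving only a boundary integral on the slice $H_t$ itself plus the tail integral $\int_{H_t^+}w_\Omega$. The resulting inequality $\phi(t)^{(d-1)/d}\le C(-\phi'(t)+\phi(t))$ involves quantities evaluated only at $t$; since $\phi\to0$, the term $C\phi$ is absorbed into the left-hand side for large $t$, yielding a genuine ODE $\phi'\le-c\,\phi^{(d-1)/d}$ with finite-time extinction. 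To repair your argument you would need to make the same move in the radial setting (a barrier with $-\Delta v\le1$ near $\partial B_r$, not a Lipschitz cutoff on a unit annulus), so that the energy excess is controlled by $\H^{d-1}(\Omega\cap\partial B_r)=-m'(r)$ and $m(r)$ rather than by $m(r-1)$.
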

\begin{proof}
For each $t\in\R$, we set 
\begin{equation}
H_t=\{x\in\R^d:\ x_1=t\},\qquad H_t^+=\{x\in\R^d:\ x_1>t\},\qquad H_t^-=\{x\in\R^d:\ x_1<t\}.
\end{equation}
We prove that there is some $t\in\R$ such that $|H_t^+\cap \Omega|=0$. For sake of simplicity, set $w:=w_\Omega$ and $M=\|w\|_{L^\infty}$. For any $t\in\R$ consider the function
\begin{equation}
v_t(x_1,\dots,x_d)=\begin{cases}
\begin{array}{ll}
M&,\ x_1\le t-\sqrt M,\\
\frac12\Big(2M-(x_1-t+\sqrt {2M})^2\Big)&, t-\sqrt {2M}\le x_1\le t,\\
0&,\ t\le x_1.
\end{array}
\end{cases}
\end{equation}

Consider the set $\Omega_t=\Omega\cap H_t^-$, obtained ``cutting'' \(\Omega\) with an hyperplane, and the function $w_t=w\wedge v_t\in \widetilde H^1_0(\Omega_t)$. We recall that $w_{\Omega_t}$ is the orthogonal projection of $w$ on $\widetilde{H}^1_0(\Omega_t)$ with respect to the $H_0^1(\Omega)$ scalar product\footnote{We recall that, thanks to the Poincar\`e inequality, \(\widetilde H^1_0(\Omega)\) is an Hilbert space with the scalar product given by \[\langle u,v\rangle_{\widetilde H_0^1(\Omega)} =\int \nabla u\cdot \nabla v\,dx. \]}, hence
$$\|w_{\Omega_t}-w\|_{L^2}\le C(\Omega)\|\nabla w-\nabla w_{\Omega_t}\|_{L^2}\le C(\Omega)\|\nabla(w-w_t)\|_{L^2},$$
for some constant $C(\Omega)$ depending on $\Omega$. Thus, for $t$ big enough, we have $E(\Omega_t)-E(\Omega)\le\eps$.  
Hence, we can use $\Omega_t$ as a competitor in \eqref{sub1}. We thus get the inequality:
$$J(w)+kP(\Omega)=\widetilde E(\Omega)+kP(\Omega)\le\widetilde E(\Omega_t)+kP(\Omega_t)\le J(w_t)+kP(\Omega_t),$$
where the functional $J$ is defined in \eqref{eq2}. Hence we get
$$\frac{1}{2}\int_{\Omega}|\nabla w|^2\,dx-\int_\Omega w\,dx+kP(\Omega)\le \frac{1}{2}\int_{\Omega_t}|\nabla w_t|^2\,dx-\int_{\Omega_t} w_t\,dx+kP(\Omega_t).$$
Notice that $w_t=0$ on $H_t^+$ and $w_t=w$ on $H_{t-\sqrt{2 M}}^-$. Setting $t_-=t-\sqrt{2M}$, and using the inequality
\begin{equation*}\label{disuvec}
\frac{|a|^2}{2}-\frac{|b|^2}{2}\le a\cdot (a-b)\qquad \forall\, a,b \in \R^d,
\end{equation*}
 we obtain
\begin{equation*}\label{bounded1}
\begin{split}
\frac{1}{2}\int_{H_t^+}|\nabla w|^2\,dx+k(P(\Omega)-P(\Omega_t))&\le \frac{1}{2}\int_{\{t_-<x_1<t\}}|\nabla w_t|^2-|\nabla w|^2\,dx+\int_{H_{t_-}^+}(w-w_t)\,dx\\
&\le \int_{\{t_-<x_1<t\}}\nabla w_t\cdot\nabla (w_t-w)\,dx+\int_{H_{t_-}^+}(w-w_t)\,dx\\
&=-\int_{\{t_-<x_1<t\}}\nabla v_t\cdot\nabla (w-v_t)_+\,dx+\int_{H_{t_-}^+}(w-v_t)_+\,dx\\
&=-\int_{H_t}w\,\dfrac{\partial v_t}{\partial x_1}\,d\H^{d-1}+\int_{H_t^+}w\,dx\\
&=\sqrt{2M}\int_{H_t}w\,d\H^{d-1}+\int_{H_t^+}w\,dx
\end{split}
\end{equation*}
where for a generic $u\in H^1(\R^d)$, with $u_+:=\sup\{u,0\}$ we indicate the positive part of $u$. Using again the boundedness of $w$, we get
\begin{equation}\label{bounded2}
k(P(\Omega, H^+_t)-P(H^+_t,\Omega))\le \sqrt 2 M^{3/2}\H^{d-1}(H_t\cap \Omega)+M|\Omega\cap H^+_t|.
\end{equation}
On the other hand, by the isoperimetric inequality, for almost every \(t\) we have 
\begin{equation}\label{bounded3}
|\Omega\cap H_t^+|^{\frac{d-1}{d}}\le C_d P(\Omega\cap H_t^+)= C_d\left(\H^{d-1}(H_t\cap \Omega)+P(\Omega, H_t^+)\right)\\
\end{equation}
Putting together \eqref{bounded2} and \eqref{bounded3} we obtain
\begin{equation}\label{bounded4}
|\Omega\cap H_t^+|^{\frac{d-1}{d}} \le C_1\left(\H^{d-1}(H^+_t\cap \Omega)+|\Omega\cap H_t^+|\right),
\end{equation}
where $C_1$ is some constant depending on the dimension $d$, the constant $k$ and the norm $M$. Setting $\phi(t)=|\Omega\cap H_t^+|$, we have that $\phi(t)\to0$ as $t\to+\infty$ and $\phi'(t)=-\H^{d-1}(H_t\cap\Omega)$. Chosing \(T=T(\Omega)\) such that
\[
C_1\phi(t)\le \frac 1  2 \phi(t)^{\frac{d-1}{d}} \qquad \forall \, t\ge T,
\]
equation \eqref{bounded4} gives
\begin{equation*}
\phi'(t)\le -2C_1\phi(t)^{1-1/d} \qquad \forall\, t\ge T,
\end{equation*}
which  implies that $\phi(\bar t)$ vanishes for some $\bar t\in\R$. Repeating this argument in any direction, we obtain that $\Omega$ is bounded.
\end{proof}

We are now in position to prove the existence of an optimal set for \eqref{lbfper}. We first prove the result for the problem \eqref{lbk}. This is just a particular case of \eqref{lbfper}, but it will be the first step of the proof of the more general Theorem \ref{thfper}, which is based on the same idea.

\begin{teo}\label{th1}
For any $k\in\N$, there exists a solution $\Omega\subset\R^d$ of \eqref{lbk}. Moreover, any solution of \eqref{lbk} is a bounded set.
\end{teo}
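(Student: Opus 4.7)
My plan is to proceed by induction on $k$, combining the concentration-compactness alternative of Theorem \ref{cc} with a simple scaling argument; the boundedness claim then follows immediately from Proposition \ref{lag} (applied with $p=1$ and $f(x)=x$) together with Lemma \ref{bounded}, so the real content of the theorem is the existence statement.

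Fix $k\in\N$, let $\Omega_n$ be a minimizing sequence with $P(\Omega_n)=c$, and let $m$ denote the infimum in \eqref{lbk} (which is finite since balls, suitably rescaled, are admissible). I would apply Theorem \ref{cc} to $\Omega_n$. Since $\widetilde\lambda_1(\Omega_n)\le\widetilde\lambda_k(\Omega_n)$ is uniformly bounded, Remark \ref{simple} excludes vanishing. In the compactness case, Remark \ref{simple2} yields a translated limit $\Omega$ with $P(\Omega)\le c$ and $\widetilde\lambda_k(\Omega)\le m$; rescaling by $t=(c/P(\Omega))^{1/(d-1)}\ge 1$ produces an admissible set $t\Omega$ with $P(t\Omega)=c$ and $\widetilde\lambda_k(t\Omega)=t^{-2}\widetilde\lambda_k(\Omega)\le m$, which is therefore a minimizer.

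The main obstacle is the dichotomy case. Since $d(A_n,B_n)\to\infty$, the subspaces $\widetilde H^1_0(A_n)$ and $\widetilde H^1_0(B_n)$ become asymptotically orthogonal inside $\widetilde H^1_0(A_n\cup B_n)$, and the spectrum of the disjoint union is the sorted merger of the two spectra. Combined with property (e) of Theorem \ref{cc} and the continuity of eigenvalues under operator-norm convergence of resolvents, this yields
\[
\widetilde\lambda_k(\Omega_n)+o(1)=\widetilde\lambda_k(A_n\cup B_n)=\max\bigl(\widetilde\lambda_i(A_n),\widetilde\lambda_j(B_n)\bigr),
\]
for some pair of nonnegative integers $i+j=k$ that we fix along a subsequence. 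The degenerate case $j=0$ (or symmetrically $i=0$) is ruled out by a scaling contradiction: since $|B_n|$ is bounded below by (c) of Theorem \ref{cc}, the isoperimetric inequality gives $P(B_n)\ge\delta>0$, hence $P(A_n)\le c-\delta+o(1)$, and rescaling $A_n$ up to perimeter $c$ produces a competitor with $k$-th eigenvalue strictly less than $m$, contradicting the definition of $m$. Therefore $i,j\ge 1$ and both indices are strictly less than $k$.

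The inductive hypothesis, applied to the subproblems of minimizing $\widetilde\lambda_i$ and $\widetilde\lambda_j$ under perimeter constraints $c_1=\lim P(A_n)$ and $c_2=\lim P(B_n)$ respectively (with $c_1+c_2\le c$), then provides minimizers $A^\star$ and $B^\star$. Their disjoint translated union $\Omega^\star=A^\star\sqcup B^\star$ satisfies $P(\Omega^\star)\le c$ and $\widetilde\lambda_k(\Omega^\star)\le\max(\widetilde\lambda_i(A^\star),\widetilde\lambda_j(B^\star))\le m$, and a final rescaling yields an admissible minimizer. The base case $k=1$ is automatic: $i+j=1$ forces the degenerate case that we already ruled out, so dichotomy cannot occur and compactness alone gives the conclusion. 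The step I expect to be the most delicate is the index/perimeter bookkeeping in the dichotomy alternative, where one must carefully identify the indices $i,j$ and the corresponding perimeter masses $c_1, c_2$ before invoking the inductive hypothesis.
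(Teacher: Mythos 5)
Your proof is correct and follows essentially the same strategy as the paper: induction on $k$, the concentration-compactness alternative of Theorem \ref{cc} with vanishing excluded via Remark \ref{simple}, a rescaling argument to discard the degenerate dichotomy in which one piece carries all of the first $k$ eigenvalues, and then the inductive hypothesis together with boundedness (Proposition \ref{lag} with $p=1$ plus Lemma \ref{bounded}) to place the two optimal pieces at positive distance and glue them. The only divergence is the base case $k=1$: the paper invokes the symmetrization fact that the ball minimizes $\widetilde\lambda_1$ under a perimeter constraint, whereas you rule out dichotomy directly because $\widetilde\lambda_1$ of a distant disjoint union is realized by a single piece of strictly smaller perimeter; both arguments are valid.
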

\begin{proof}
Without loss of generality we assume that $c=1$. We prove the theorem by induction on $k\in\N$. For $k=1$ the existence holds, since by a standard symmetrization argument, we have that the optimal set is a ball of perimeter $1$.

Let $k>1$ and let $\Omega_n$ be a minimizing sequence for \eqref{lbk}. We note that clearly the perimeters are bounded and the sets have finite measures, hence the assumptions of Theorem \ref{cc} are satisfied. Moreover by the monotonicity of the eigenvalues
\[
\limsup_{n \to \infty} \widetilde \lambda_1(\Omega_n)\le \limsup_{n \to \infty} \widetilde \lambda_k(\Omega_n)<+\infty,
\]
hence by Remark \ref{simple} we have only   two possibilities:
\begin{enumerate}[(i)]
\item\emph{Compactness:} Since $y_n+\Omega_n$ is also a minimizing sequence, we have that the limit $\Omega$ is such that 
\[
P(\Omega)\le 1 \quad\text{ and }\quad \widetilde \lambda_k(\Omega)\le\liminf_{n\to\infty}\widetilde \lambda_k(\Omega_n).
\]
Thus $\Omega$ is a solution of \eqref{lbk}.
\item\emph{Dichotomy:} By the dichotomy case of Theorem \ref{cc} and a scaling argument, we can construct a sequence of sets  
\[
\widetilde\Omega_n=A_n\cup B_n,
\]
satisfying \(P(\widetilde \Omega_n)=1\), $d(A_n,B_n)\to\infty$ and which is still a minimizing sequence
 for \eqref{lbk}. 
 Without loss of generality and up to extracting a subsequence\footnote{We recall that if \(\Omega=A\cup B\) with \(\dist(A,B)>0\), \(\widetilde H_0^1(\Omega)=\widetilde H^1_0(A)\oplus \widetilde H_0^1(B)\) hence the spectrum of the Dirichlet Laplacian of \(\Omega\) is given by the union of the spectrum of the Dirichlet Laplacian of \(A\) and of \(B\).}, if necessary, we may assume that $\lambda_k(A_n\cup B_n)=\lambda_l(A_n)$ for some $l\le k$. We note that since $\widetilde\Omega_n$ is minimizing, we must have $l<k$. Indeed, if this is not the case, i.e. $l=k$, the sequence $A_n$ is such that $\widetilde\lambda_k(A_n)=\widetilde\lambda_k(\widetilde\Omega_n)$ but on the other hand, by Theorem \ref{cc} 
$$\limsup_{n\to\infty}P(A_n)\le 1-\liminf_{n\to\infty}P(B_n)<1,$$
where the strict inequality is due to point $(c)$ of the dichotomy case in Theorem \ref{cc} and the isoperimetric inequality
\[
\liminf_{n\to \infty} P(B_n)\ge C_d \liminf_{n\to \infty} |B_n|^{\frac{d-1}{d}}>0.
\]
Thus an appropriate rescaling of $A_n$ would lead to a strictly better minimizing sequence, which is impossible. Hence \(l<k\), taking the biggest possible $l$, we can assume $\widetilde\lambda_l(A_n)<\widetilde\lambda_{l+1}(A_n)$. Since the  spectrum of the Dirichlet Laplacian of \(\widetilde\Omega_n\) is given by the union of the spectrum of the the Laplacian on \(A_n\) and \(B_n\), this assumption implies 
\begin{equation}\label{corridoio}
\max\big\{\widetilde\lambda_l(A_n),\widetilde\lambda_{k-l} (B_n)\big\}=\widetilde\lambda_l(A_n)=\widetilde\lambda_k(\widetilde\Omega_n).
\end{equation}

Up to a extract a subsequence, we suppose that the following limits exist:
$$\lim_{n\to\infty}P(A_n),\qquad \lim_{n\to\infty}P(B_n),\qquad \lim_{n\to\infty}\widetilde\lambda_l(A_n),\qquad \lim_{n\to\infty}\widetilde\lambda_{k-l}(B_n).$$
Let $\Omega_l^\ast$ and $\Omega_{k-l}^\ast$ be the optimal sets for \eqref{lbk} for $\lambda_l$ with constraint $c_A=\lim_{n\to\infty}P(A_n)$ and $\lambda_{k-l}$ with constraint $c_B=\lim_{n\to\infty}P(B_n)$, respectively. Since, thanks to Lemma \ref{bounded}, they are bounded, we may suppose that up to translation, they are at positive distance. Then
\[
P(\Omega_l^\ast\cup\Omega_{k-l}^\ast)=P(\Omega_l^\ast)+P(\Omega_{k-l}^\ast)=\lim_{n\to \infty} P(A_n)+\lim_{n\to \infty} P(B_n) \le 1
\]
and, by the choice of \(\Omega_l^\ast\) and \(\Omega_{k-l}^\ast\),
\[
\widetilde \lambda_l(\Omega_l^\ast)\le \lim_{n\to\infty}\widetilde\lambda_l(A_n),\qquad\widetilde \lambda_{k-l}(\Omega_{k-l}^\ast)\le \lim_{n\to\infty}\widetilde \lambda_{k-l}(B_n).
 \]
As a consequence, thanks to \eqref{corridoio},
\[
\begin{split}
\widetilde \lambda_k(\Omega_l^\ast\cup\Omega_{k-l}^\ast)&=\max\big\{\widetilde \lambda_l(\Omega_l^\ast),\widetilde \lambda_{k-l}(\Omega_{k-l}^\ast)\big\}\\
&\le \lim_{n\to\infty}\max\big\{\widetilde\lambda_l(A_n),\widetilde \lambda_{k-l}(B_n)\big\}\\
& =\liminf _{n\to\infty}\widetilde \lambda_{k}(\widetilde \Omega_n).
\end{split}
\]
Hence $\Omega_l^\ast\cup\Omega_{k-l}^\ast$ is a solution of \eqref{lbk}.
\end{enumerate}
\end{proof}

We now prove the existence in the general case of a spectral functional of the form $F(\Omega)=f\left(\widetilde\lambda_1(\Omega),\dots,\widetilde\lambda_k(\Omega)\right)$. 

\begin{teo}\label{thfper}
Suppose that $f$ satisfies $(f1)$, $(f2)$ and $(f3)$. Then there exists a solution $\Omega\subset\R^d$ of \eqref{lbfper}.
Moreover, every solution of \eqref{lbfper} is a bounded set.
\end{teo}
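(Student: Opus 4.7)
The plan is to follow Theorem \ref{th1} by an induction on the largest index $k_p$, handling the presence of several eigenvalues through a careful choice of auxiliary sub-functionals. The base case $k_p=1$ forces $p=1$ and $k_1=1$; by the strict monotonicity $(f3)$ the problem reduces to minimizing $\widetilde\lambda_1$ at fixed perimeter, whose solution is the ball via Faber--Krahn.

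For the inductive step, pick a minimizing sequence $(\widetilde\Omega_n)$. Assumption $(f1)$ combined with the uniform upper bound on $f(\widetilde\lambda_{k_1}(\widetilde\Omega_n),\dots,\widetilde\lambda_{k_p}(\widetilde\Omega_n))$ forces each $\widetilde\lambda_{k_j}(\widetilde\Omega_n)$ to be bounded, hence $\sup_n\widetilde\lambda_1(\widetilde\Omega_n)<+\infty$; by Remark \ref{simple} vanishing in Theorem \ref{cc} is ruled out. The compactness case yields a minimizer via Remark \ref{simple2}.

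In the dichotomy case, the resolvent convergence $\|R_{A_n\cup B_n}-R_{\widetilde\Omega_n}\|_{\mathcal{L}(L^2(\R^d))}\to 0$ allows me to replace $\widetilde\Omega_n$ by $A_n\cup B_n$, still minimizing; by $(f3)$ the limit perimeters satisfy $c_A+c_B=c$, where $c_A:=\lim_n P(A_n)$ and $c_B:=\lim_n P(B_n)$ (otherwise a rescaling of $A_n\cup B_n$ to perimeter $c$ would strictly decrease every $\widetilde\lambda_{k_j}$ and beat the minimum). Up to a subsequence, each $\widetilde\lambda_{k_j}(A_n\cup B_n)$ is realized as a fixed eigenvalue $\widetilde\lambda_{l_j^A}(A_n)$ or $\widetilde\lambda_{l_j^B}(B_n)$, with $l_j^A+l_j^B=k_j$. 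The rescaling argument used in Theorem \ref{th1}, combined with $(f3)$, shows that the top indices satisfy $l_p^A<k_p$ and $l_p^B<k_p$: if instead $l_p^A=k_p$, then necessarily $\widetilde\lambda_{k_j}(\widetilde\Omega_n)=\widetilde\lambda_{k_j}(A_n)$ for every $j$, and rescaling $A_n$ by $(c/P(A_n))^{1/(d-1)}>1$ strictly decreases each $\widetilde\lambda_{k_j}$, contradicting minimality.

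I then invoke the inductive hypothesis on each side with perimeters $c_A, c_B$ and auxiliary spectral functionals $f^A, f^B$ inherited from $f$ by freezing the coordinates belonging to the opposite piece at their limit values $\bar\lambda^A_j:=\lim_n\widetilde\lambda_{l_j^A}(A_n)$ and $\bar\lambda^B_j:=\lim_n\widetilde\lambda_{l_j^B}(B_n)$; both $f^A, f^B$ inherit $(f1)$--$(f3)$ and have top indices strictly less than $k_p$. The resulting optimal sets $\Omega^A,\Omega^B$ are bounded by Lemma \ref{bounded}, so after translating them to positive distance, $\Omega^\ast:=\Omega^A\cup\Omega^B$ is admissible with $P(\Omega^\ast)=c$, and since the spectrum of a disjoint union is the sorted merge of the two spectra,
\[
\widetilde\lambda_{k_j}(\Omega^\ast)\le\max\bigl\{\widetilde\lambda_{l_j^A}(\Omega^A),\,\widetilde\lambda_{l_j^B}(\Omega^B)\bigr\}\le\lim_n\widetilde\lambda_{k_j}(\widetilde\Omega_n)\qquad\forall\,j,
\]
so $(f3)$ monotonicity makes $\Omega^\ast$ a minimizer. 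Boundedness of any such minimizer then follows at once from Proposition \ref{lag} combined with Lemma \ref{bounded}. The main obstacle is to pass from the scalar optimality of $\Omega^A, \Omega^B$ for $f^A, f^B$ to the coordinatewise inequalities $\widetilde\lambda_{l_j^A}(\Omega^A)\le\bar\lambda^A_j$ and its $B$-counterpart used in the chain above; a priori, minimality of a single scalar $f^A$ only gives a joint bound $f^A(\widetilde\lambda_{l^A}(\Omega^A))\le f^A(\bar\lambda^A)$, and upgrading to the coordinatewise estimate requires either choosing $f^A$ strictly increasing in each variable in a sufficiently strong sense (for instance a large-degree positive polynomial in the $\widetilde\lambda_{l_j^A}$) or passing through a Pareto-optimal choice of $\Omega^A$.
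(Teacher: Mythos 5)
Your overall strategy coincides with the paper's: concentration--compactness (Theorem \ref{cc}), exclusion of vanishing via $(f1)$ and Remark \ref{simple}, and, in the dichotomy case, a splitting of the eigenvalues between $A_n$ and $B_n$ followed by an appeal to an inductive hypothesis for auxiliary functionals obtained by freezing coordinates. One structural difference: the paper inducts on the number $p$ of arguments of $f$ rather than on $k_p$, which makes the strict decrease of the induction parameter immediate (the nontrivial splitting gives $1\le l<p$ variables on each side), whereas your claim that the top \emph{eigenvalue indices} strictly decrease on both sides needs its own justification.

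The gap you flag at the end is genuine, and it is exactly the point where your argument diverges from (and falls short of) the paper's. Replacing $A_n$ and $B_n$ \emph{simultaneously} by $\Omega^A$ and $\Omega^B$ requires the coordinatewise inequalities $\widetilde\lambda_{l_j^A}(\Omega^A)\le\bar\lambda^A_j$, which scalar optimality for $f^A$ does not deliver; neither of your suggested repairs works, since $f^A$ is dictated by $f$ (you are not free to replace it by a large-degree polynomial without changing the problem), and Pareto optimality of $\Omega^A$ compares it against other admissible sets, not against the limit vector $\bar\lambda^A$. The paper avoids the issue by performing the replacement \emph{sequentially}: first only $A_n$ is replaced by $A^\ast$, a minimizer of $f_\alpha$ (the restriction of $f$ with the $\beta$-coordinates frozen at $\lim_n\widetilde\lambda_{\beta_j}(B_n)$); the local Lipschitz continuity $(f2)$ then shows that the \emph{scalar} value $f_\alpha(\widetilde\lambda_{\alpha}(A^\ast))$ does not exceed the infimum of \eqref{lbfper}, so that $A^\ast\cup B_n$ is still a minimizing sequence. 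Only afterwards is $f_\beta$ defined by freezing the $\alpha$-coordinates at the \emph{actual} eigenvalues of $A^\ast$ (not at the limits of $\widetilde\lambda_{\alpha_i}(A_n)$), and $B_n$ is replaced by a minimizer $B^\ast$ of $f_\beta$. Every comparison in this scheme is scalar, so no coordinatewise estimate is ever needed. You should restructure your dichotomy step accordingly; as written, the proof is incomplete.
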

\begin{proof}
As in the proof of Theorem \ref{th1}, we proceed by induction, this time on the number of variables $p$. If $p=1$, then thanks to the monotonicity of \(f\), any solution of \eqref{lbk} is also a solution of \eqref{lbfper} and so we have the claim by Theorem \ref{th1}. 

Consider now the functional $F(\Omega)=f(\widetilde\lambda_{k_1}(\Omega),\dots,\widetilde\lambda_{k_p}(\Omega))$ and let $\Omega_n$ be a minimizing sequence. By Theorem \ref{cc} and using \((f1)\) and Remark \ref{simple}, there are two possible behaviours for the sequence $\Omega_n$: compactness and dichotomy. 

\noindent
If the compactness occurs, we  immediately obtain the existence of an optimal set. Otherwise, in the dichotomy case,  we may suppose that $\Omega_n=A_n\cup B_n$, where the Lebesgue measure of $A_n$ and $B_n$ is uniformly bounded from below and $\dist(A_n,B_n)\to\infty$. Moreover, up to a extracting a subsequence, we may suppose that there is some $1\le l<p$ and two sets of natural numbers
$$1\le\alpha_1<\dots<\alpha_l,\qquad 1\le\beta_{l+1}<\dots<\beta_{p},$$
such that for every $n\in\N$, we have
$$\left\{\widetilde\lambda_{\alpha_1}(A_n),\dots,\widetilde\lambda_{\alpha_l}(A_n),\widetilde\lambda_{\beta_{l+1}}(B_n),\dots,\widetilde\lambda_{\beta_{p}}(B_n)\right\}=\left\{\widetilde\lambda_{k_1}(\Omega_n),\dots,\widetilde\lambda_{k_p}(\Omega_n)\right\}.$$
Indeed, if all the eigenvalues of \(\Omega_n\) are realized by, say, \(A_n\) arguing as in the proof of Theorem \ref{th1} we can construct a strictly better minimizing sequence.
Moreover, without loss of generality we may suppose that
$$\widetilde\lambda_{\alpha_i}(A_n)=\widetilde\lambda_{k_i}(\Omega_n), \forall i=1,\dots,l;\qquad \widetilde\lambda_{\beta_j}(B_n)=\widetilde\lambda_{k_j}(\Omega_n), \forall j=l+1,\dots,p.$$ 
We can also assume that the sequences $\widetilde\lambda_{\alpha_i}(A_n)$ and $\widetilde\lambda_{\beta_j}(B_n)$ converge as $n\to\infty$. By a scaling argument as in Theorem \ref{th1} we also have that without loss of generality $P(A_n)=c_\alpha$ and $P(B_n)=c_\beta$, where $c_\alpha$ and $c_\beta$ are fixed positive constants. Let $f_\alpha:\R^l\to\R$ be the restriction of $f$ to the $l$-dimensional hyperplane 
$$\left\{x\in\R^p:\ x_j=\lim_{n\to\infty}\widetilde\lambda_{\beta_j}(B_n),\ j=l+1,\dots,p\right\}.$$
Since $l<p$, by the inductive assumption, there is a solution $A^\ast$ of the problem 
\begin{equation}\label{lbfper1}
\min\left\{f_\alpha\left(\widetilde\lambda_{\alpha_1}(A),\dots,\widetilde\lambda_{\alpha_l}(A)\right):\ A\subset\R^d,\ A\ \hbox{measurable},\ P(A)= c_\alpha,\ |A|<\infty\right\}.
\end{equation}     
Since $f$ is locally Lipschitz, we have
\begin{equation*}
\begin{array}{ll}
\liminf_{n\to\infty}f\left(\widetilde\lambda_{\alpha_1}(A_n),\dots,\widetilde\lambda_{\alpha_l}(A_n),\widetilde\lambda_{\beta_{l+1}}(B_n),\dots,\widetilde\lambda_{\beta_p}(B_n)\right)\\
\\
\qquad=\liminf_{n\to\infty}f\left(\widetilde\lambda_{\alpha_1}(A_n),\dots,\widetilde\lambda_{\alpha_l}(A_n),\lim_{m\to\infty}\widetilde\lambda_{\beta_{l+1}}(B_m),\dots,\lim_{m\to\infty}\widetilde\lambda_{\beta_p}(B_m)\right),
\end{array}
\end{equation*}
and thus the minimum in \eqref{lbfper1} is smaller than the infimum in \eqref{lbfper}. Moreover, $A^\ast$ is bounded and so, we may suppose that $\dist(A^\ast,B_n)>0$, for all $n\in\N$. Thus, again by the Lipschitz condition on $f$, the sequence $A^\ast\cup B_n$ is minimizing for \eqref{lbfper}.

  Let now $f_\beta:\R^{p-l}\to\R$ be the restriction of $f$ to the $(p-l)$-dimensional hyperplane 
$$\left\{x\in\R^p:\ x_i=\widetilde\lambda_{\alpha_i}(A^\ast),\ i=1,\dots,l\right\}.$$
Let $B^\ast$ be a solution of the problem  
\begin{equation}\label{lbfper2}
\min\left\{f_\beta\left(\widetilde\lambda_{\beta_{l+1}}(B),\dots,\widetilde\lambda_{\beta_p}(B)\right):\ B\subset\R^d,\ B\ \hbox{measurable},\ P(B)= c_\beta,\ |B|<\infty\right\}.
\end{equation} 
We have that the minimum in \eqref{lbfper2} is smaller than the minimum in \eqref{lbfper1} and so than that in \eqref{lbfper}. On the other hand, since both $A^\ast$ and $B^\ast$ are bounded and the functionals we consider are translation invariant, we may suppose that $\dist(A^\ast,B^\ast)>0$. Thus the set $\Omega^\ast:=A^\ast\cup B^\ast$  is a solution of \eqref{lbfper}. 
\end{proof}




\section{Existence of an open solution }\label{exo}
In this section we study the shape optimization problems \eqref{lbko} and \eqref{lbfperint}. In Theorem \ref{thfpero} we prove that there exist solutions of \eqref{lbko} and \eqref{lbfperint} and we study their relation with the corresponding solutions of \eqref{lbk} and \eqref{lbfper}. Before we prove the theorem we need some preliminary results concerning the sets which, in some generalized sense, have positive mean curvature. 

\begin{deff}
We say that the measurable set $\Omega$ is a {\bf perimeter supersolution} if it has finite Lebesgue measure, finite perimeter and satisfies the following condition:
\begin{equation}\label{ext1}
P(\Omega)\le P(\widetilde\Omega),\ \hbox{for each}\ \ \widetilde\Omega\supset\Omega.
\end{equation} 
\end{deff}

\begin{oss}
Let $\Omega$ be an open set with boundary $\partial\Omega$ of class $C^2$. If $\Omega$ is a perimeter supersolution, then it has positive mean curvature with respect to the interior normal vector field on $\partial\Omega$. Lemma \ref{dist} below shows that, even if it is less regular, it has positive mean curvature in the viscosity sense.
\end{oss}

The following simple Lemma will play a crucial role in the sequel:
\begin{lemma}\label{super}
Suppose that $f:\R^p\to\R$ satisfies conditions $(f1)$, $(f2)$ and $(f3)$ and that $\Omega\subset\R^d$ is a solution of \eqref{lbfper}. Then $\Omega$ is a perimeter supersolution. 
\end{lemma}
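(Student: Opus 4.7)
The plan is to proceed by contradiction: assume $\Omega$ is optimal for \eqref{lbfper} but there exists $\widetilde\Omega\supset\Omega$ with $P(\widetilde\Omega)<P(\Omega)=c$, and then manufacture a strictly better competitor by rescaling $\widetilde\Omega$ back to the perimeter constraint.

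First I would dispose of degenerate cases. If $P(\widetilde\Omega)=+\infty$ the inequality in \eqref{ext1} is trivial, so I may assume $P(\widetilde\Omega)<P(\Omega)<\infty$. Since $\widetilde\Omega$ has finite perimeter, by the standard isoperimetric inequality either $|\widetilde\Omega|<\infty$ or $|\R^d\setminus\widetilde\Omega|<\infty$; the latter case gives $R_{\widetilde\Omega}$ unbounded and all the $\widetilde\lambda_{k_j}(\widetilde\Omega)$ equal to zero under the convention dictated by the variational formulation, which already contradicts optimality of $\Omega$ (since rescaling, or directly the monotonicity of $f$, makes its value drop below $f(\Omega)$). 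So assume $|\widetilde\Omega|<\infty$.

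Now set
\[
t:=\left(\frac{P(\Omega)}{P(\widetilde\Omega)}\right)^{1/(d-1)}>1,
\]
so that $P(t\widetilde\Omega)=P(\Omega)=c$, and $t\widetilde\Omega$ is admissible in \eqref{lbfper}. By the inclusion $\Omega\subset\widetilde\Omega$ we have $\widetilde H^1_0(\Omega)\subset\widetilde H^1_0(\widetilde\Omega)$, so the min-max characterization gives $\widetilde\lambda_{k_j}(\widetilde\Omega)\le\widetilde\lambda_{k_j}(\Omega)$ for every $j$. Combined with the scaling law $\widetilde\lambda_{k_j}(t\widetilde\Omega)=t^{-2}\widetilde\lambda_{k_j}(\widetilde\Omega)$ this yields
\[
\widetilde\lambda_{k_j}(t\widetilde\Omega)\le t^{-2}\widetilde\lambda_{k_j}(\Omega)<\widetilde\lambda_{k_j}(\Omega),\qquad j=1,\dots,p,
\]
the strict inequality following from $t>1$ and $\widetilde\lambda_{k_j}(\Omega)>0$.

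Finally I invoke the strict monotonicity contained in $(f3)$: since the vectors $x:=(\widetilde\lambda_{k_j}(\Omega))_j$ and $y:=(\widetilde\lambda_{k_j}(t\widetilde\Omega))_j$ both lie in a compact subset of $\R^p\setminus\{0\}$ (they are bounded above by $x$ and below by $t^{-2}x$, with $x$ having strictly positive entries), we obtain
\[
f(x)-f(y)\ge a\,|x-y|>0,
\]
i.e.\ $f(\widetilde\lambda_{k_1}(t\widetilde\Omega),\dots,\widetilde\lambda_{k_p}(t\widetilde\Omega))<f(\widetilde\lambda_{k_1}(\Omega),\dots,\widetilde\lambda_{k_p}(\Omega))$, contradicting the optimality of $\Omega$. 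The main delicate point I expect is the qualitative treatment of sets $\widetilde\Omega$ of infinite measure (equivalently, with finite co-measure), but this is handled by the preliminary reduction above.
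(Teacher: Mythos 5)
Your core argument is exactly the paper's proof: given $\widetilde\Omega\supset\Omega$ with $P(\widetilde\Omega)<P(\Omega)$, rescale by $t=(P(\Omega)/P(\widetilde\Omega))^{1/(d-1)}>1$ to restore the perimeter constraint, use monotonicity of the eigenvalues under inclusion plus the $t^{-2}$ scaling to get a strict componentwise decrease, and conclude via the strict monotonicity in $(f3)$; this is correct. The one place you go astray is the aside on supersets with $|\R^d\setminus\widetilde\Omega|<\infty$: such a set is not admissible in \eqref{lbfper} (which requires finite measure), so "its value drops below $f(\Omega)$" yields no contradiction with optimality, and indeed $\widetilde\Omega=\R^d$ shows that the inequality $P(\Omega)\le P(\widetilde\Omega)$ simply fails for such competitors. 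The definition of perimeter supersolution must be read as quantifying over finite-measure (or locally perturbed) supersets, which is how the paper tacitly treats it and how it is used later (the competitors in Lemma \ref{ext} are $\Omega\cup B_r(x)$); with that reading your preliminary reduction is unnecessary and the rest of your proof stands.
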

\begin{proof}
Suppose, by contradiction, that $\widetilde\Omega\supset\Omega$ is such that $P(\widetilde\Omega)<P(\Omega)$ and set $$t=\left(P(\Omega)/P(\widetilde\Omega)\right)^{1/(d-1)}>1.$$
 Then, for any $k\in\N$, we have
\[
\widetilde \lambda_k (t\widetilde \Omega)<\widetilde \lambda_k (\widetilde \Omega)\le \widetilde \lambda_k(\Omega).
\]
On the other hand $P(t\widetilde\Omega)=P(\Omega)$ and so, by the optimality of $\Omega$ and the strict monotonicity of \(f\), \((f3)\), we have
\begin{equation*}
\begin{array}{ll}
0&\le f\left(\widetilde\lambda_{k_1}(t\widetilde\Omega),\dots,\widetilde\lambda_{k_p}(t\widetilde\Omega)\right)-f\left(\widetilde\lambda_{k_1}(\Omega),\dots,\widetilde\lambda_{k_p}(\Omega)\right)\\
\\
&<f\left(\widetilde\lambda_{k_1}(\widetilde\Omega),\dots,\widetilde\lambda_{k_p}(\widetilde\Omega)\right)-f\left(\widetilde\lambda_{k_1}(\Omega),\dots,\widetilde\lambda_{k_p}(\Omega)\right)\le 0,
\end{array}
\end{equation*}
which is a contradiction.
\end{proof}

The following result is classical (see, for instance, \cite{giusti}, \cite[Theorem 16.14]{maggi}) and so we only sketch the proof.
\begin{lemma}\label{ext}
Let $\Omega\subset\R^d$ be a perimeter supersolution. Then there exists a positive constant \(\bar c\), depending only on the dimension $d$, such that for every \(x\in \R^d\), one of the following situations occur:
\begin{enumerate}[(a)]
\item there is some ball $B_r(x)$ with $r>0$ such that $B_r(x)\subset \Omega$ a.e.,
\item for each ball $B_r(x)\subset\R^d$,  we have $|B_r(x)\cap\Omega^c|\ge \bar c|B_r|$.
\end{enumerate}
\end{lemma}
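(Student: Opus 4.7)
Fix $x \in \R^d$ and set $V(r) := |B_r(x) \setminus \Omega|$. By Fubini, $V$ is absolutely continuous with $V(0) = 0$ and $V'(r) = \H^{d-1}(\partial B_r(x) \setminus \Omega)$ for a.e.\ $r > 0$. The plan is to derive from \eqref{ext1} a differential inequality of the form $V^{(d-1)/d} \le 2 c_d V'$, with $c_d$ the isoperimetric constant, and then read off the dichotomy: either $V$ vanishes on some interval $(0,r_0)$, which is case (a), or $V > 0$ throughout $(0,\infty)$ and integration of the ODE gives $V(r) \ge \bar c\,|B_r|$ for every $r > 0$, which is case (b).

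The key step is to test the supersolution condition against the competitor $\widetilde\Omega := \Omega \cup B_r(x)$. For a.e.\ $r > 0$ (precisely those with $\H^{d-1}(\partial^* \Omega \cap \partial B_r(x)) = 0$, which holds off a countable set since $\H^{d-1}(\partial^*\Omega) < \infty$), the standard decomposition of the reduced boundary of a union yields
\[
P(\Omega \cup B_r(x)) = P\bigl(\Omega;\,\R^d \setminus \overline{B_r(x)}\bigr) + \H^{d-1}(\partial B_r(x) \setminus \Omega),
\]
and combined with $P(\Omega) = P(\Omega;\,\R^d \setminus \overline{B_r(x)}) + P(\Omega; B_r(x))$, the inequality \eqref{ext1} collapses to $P(\Omega; B_r(x)) \le V'(r)$.

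To close the argument I would apply the isoperimetric inequality to the finite perimeter set $B_r(x) \setminus \Omega$, whose reduced boundary for the same good radii decomposes as $(\partial^* \Omega \cap B_r(x)) \sqcup (\partial B_r(x) \setminus \Omega)$; this gives
\[
V(r)^{(d-1)/d} \le c_d\bigl(P(\Omega; B_r(x)) + V'(r)\bigr) \le 2 c_d V'(r).
\]
If case (a) fails, then $V > 0$ on all of $(0,\infty)$, so $(V^{1/d})' \ge (2 c_d d)^{-1}$ a.e., and integrating from $0$ using $V(0) = 0$ and absolute continuity yields $V(r) \ge (2 c_d d)^{-d} r^d$ for every $r > 0$, which is case (b) with $\bar c = \omega_d^{-1} (2 c_d d)^{-d}$. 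The only measure-theoretic subtlety lies in the two reduced-boundary decompositions, for which one must first restrict to a.e.\ $r$ and then invoke continuity of $V$ to extend the conclusion to every $r > 0$; everything else is standard ODE bookkeeping.
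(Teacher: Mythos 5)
Your proof is correct and follows essentially the same route as the paper's: the competitor $\Omega\cup B_r(x)$ to get $P(\Omega;B_r(x))\le \H^{d-1}(\partial B_r(x)\setminus\Omega)$, the isoperimetric inequality applied to $B_r(x)\setminus\Omega$, and integration of the resulting differential inequality for $\phi(r)=|B_r(x)\setminus\Omega|$. You are in fact somewhat more careful than the paper about the a.e.-$r$ reduced-boundary decompositions and you make the constant explicit; no gaps.
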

\begin{proof}
Let $x\in\R^d$. Suppose that there is no $r>0$ such that $B_r(x)\subset\Omega$. We will prove that $(b)$ holds. Using the condition \eqref{ext1} for $\widetilde\Omega=\Omega\cup B_r(x)$ we get that for almost every \(r\),
\[
P(\Omega, B_r(x))\le \H^{d-1}(\partial B_r(x)\cap \Omega^c).
\]
Applying the isoperimetric inequality to $B_r(x)\setminus\Omega$, we obtain
\begin{equation}\label{ext2}
\begin{split}
|B_r(x)\setminus\Omega|^{1-1/d}&\le C_d\left(P(\Omega,B_r(x))+\H^{d-1}(\partial B_r(x)\cap \Omega^c)\right)\\
\\
&\le 2C_d\H^{d-1}(\partial B_r(x)\cap \Omega^c).
\end{split}
\end{equation}
Consider the function $\phi(r)=|B_r(x)\setminus\Omega|$. Note that $\phi(0)=0$ and $\phi'(r)=\H^{d-1}(\partial B_r(x)\cap\Omega)$ and so, by \eqref{ext2}, 
 \[
 \bar c \le \frac{d}{dr}\left(\phi(r)^{1/d}\right),
 \]
which after integration gives (b).
\end{proof}

\begin{deff}
If $\Omega\subset\R^d$ is a set if finite Lebesgue measure and if there is a constant $\bar{c}>0$ such that for each point $x\in\R^d$ one of the conditions $(a)$ and $(b)$, from Lemma \ref{ext}, holds, then we say that $\Omega$ satisfies an {\bf exterior density estimate}. 
\end{deff}

\begin{prop}\label{est}
Let $\Omega\subset\R^d$ be a set of finite Lebesgue measure satisfying an exterior density estimate. Then there are positive constants $C$ and $\beta$ such that, for each $x\in\R^d$ with the property that $|B_r(x)\cap\Omega^c|>0$, for every $r\ge 0$, we have 
\begin{equation}\label{est2}
\|w_\Omega\|_{L^\infty(B_r(x))}\le r^\beta \|w_\Omega\|_{L^\infty(R^d)},\ \hbox{for each}\ r>0.
\end{equation}
In particular, if $\Omega$ is a perimeter supersolution, then the above conclusion holds.
\end{prop}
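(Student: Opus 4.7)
The plan is to prove the one-step oscillation decay
\begin{equation}
\|w_\Omega\|_{L^\infty(B_{r/2}(x))} \le (1-\delta)\,\|w_\Omega\|_{L^\infty(B_r(x))} + \frac{r^2}{2d}, \tag{$\ast$}
\end{equation}
for every $r>0$, with $\delta\in(0,1)$ depending only on $d$ and $\bar c$, and then to iterate it across dyadic scales. Since $w_\Omega$ is a priori uniformly bounded by Remark \ref{propenfun}, unrolling $(\ast)$ along $r_n=r_0\,2^{-n}$ and summing the geometric contributions of the forcing term yields $\|w_\Omega\|_{L^\infty(B_{r_n}(x))}\le C\gamma^n$, where $\gamma=\max\{1-\delta,1/4\}\in(0,1)$. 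Rewriting $\gamma^n=(r_n/r_0)^\beta$ with $\beta:=\log_2(1/\gamma)>0$ delivers the claimed decay for small $r$, and the trivial bound $\|w_\Omega\|_{L^\infty}$ handles $r\ge r_0$.

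To prove $(\ast)$, set $M_r:=\|w_\Omega\|_{L^\infty(B_r(x))}$ and introduce the auxiliary function
\[
u(y):=w_\Omega(y)-\frac{r^2-|y-x|^2}{2d}.
\]
Since $-\Delta w_\Omega=1$ on $\Omega$ and $-\Delta\bigl((r^2-|y-x|^2)/(2d)\bigr)=1$, the function $u$ is harmonic on $\Omega\cap B_r(x)$. On $\partial B_r(x)\cap\Omega$ the quadratic correction vanishes, so $u\le M_r$; on $\partial\Omega\cap B_r(x)$ the trace of $w_\Omega$ vanishes and the correction is nonnegative, so $u\le 0$. By the maximum principle,
\[
u(y)\le M_r\,\omega_r(y)\qquad\text{for every }y\in\Omega\cap B_r(x),
\]
where $\omega_r$ denotes the harmonic measure of $\partial B_r(x)\cap\Omega$ inside $\Omega\cap B_r(x)$.

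The decisive step is the potential-theoretic bound $\omega_r(y)\le 1-\delta$ for every $y\in B_{r/2}(x)$, with $\delta=\delta(d,\bar c)>0$. This reflects the classical fact that a uniform Lebesgue density of $\Omega^c$ in $B_r(x)$ forces a scale-invariant lower bound on the Newtonian capacity of $B_r(x)\cap\Omega^c$ inside $B_{2r}(x)$, and consequently a uniform lower bound on the harmonic measure of $\partial\Omega\cap B_r(x)$ from interior points of $B_{r/2}(x)$. I expect this capacity--harmonic-measure step to be the main obstacle; in a self-contained presentation it is carried out either via the Wiener criterion or by constructing an explicit subharmonic barrier supported on the thick set $B_r(x)\cap\Omega^c$. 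Once it is in hand, plugging the bound back into the definition of $u$ yields $w_\Omega(y)\le(1-\delta)M_r+r^2/(2d)$ on $\Omega\cap B_{r/2}(x)$, and $w_\Omega=0$ on $B_{r/2}(x)\setminus\Omega$, which is $(\ast)$.

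The closing assertion is immediate from Lemma \ref{ext}: the hypothesis $|B_r(x)\cap\Omega^c|>0$ for every $r>0$ excludes alternative (a) in Lemma \ref{ext} at the point $x$, so alternative (b) holds there. Thus the exterior density estimate is satisfied at $x$, and the main argument applies verbatim.
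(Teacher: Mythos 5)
Your overall architecture (a one-step decay estimate at a fixed scale, then dyadic iteration to convert the geometric decay into a H\"older rate) is exactly the paper's, and your iteration and the final reduction to alternative (b) of Lemma \ref{ext} are fine. The problem is that your proof of the one-step estimate $(\ast)$ is not actually carried out: the entire content is pushed into the bound $\omega_r\le 1-\delta$ on $B_{r/2}(x)$ for the harmonic measure of $\partial B_r(x)\cap\Omega$, which you yourself flag as ``the main obstacle'' and defer to the Wiener criterion or an unspecified barrier. That is the theorem, not a footnote. Worse, the harmonic-measure setup presupposes structure that is not available at this stage: $\Omega$ is only a measurable set here (its openness, up to capacity, is the content of Proposition \ref{soblike}, which is proved \emph{after} and \emph{using} Proposition \ref{est}), so ``$u\le 0$ on $\partial\Omega\cap B_r(x)$'' and the maximum principle on $\Omega\cap B_r(x)$ with prescribed boundary data on $\partial\Omega$ require fine potential theory (quasi-continuity, capacitary boundary values) to make sense. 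The capacity lower bound from positive Lebesgue density and the capacity-to-harmonic-measure estimate are true classical facts, so the route can be completed, but as written the decisive inequality is asserted rather than proved.

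The paper avoids all of this with a purely measure-theoretic device that you should compare with your $u$. One uses that $\Delta w_\Omega+1\ge 0$ distributionally on all of $\R^d$, so that $y\mapsto w_\Omega(y)-\frac{1}{2d}(r^2-|y-x|^2)$ is subharmonic on the whole ball, not just on $\Omega\cap B_r(x)$. The sub-mean-value inequality then gives
\[
w_\Omega(y)\le \frac{r^2}{2d}+\fint_{B_r(y)}w_\Omega\,dz,
\]
and the solid average is at most $\bigl(1-2^{-d}\bar c\bigr)\|w_\Omega\|_{L^\infty(B_{2r}(x))}$ simply because $w_\Omega=0$ a.e.\ on $\Omega^c$, which by the density estimate occupies a fixed fraction of the ball. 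This yields your $(\ast)$ with $\delta=2^{-d}\bar c$ using nothing about $\partial\Omega$, no harmonic measure, and no capacity. If you want to keep your harmonic-measure formulation, you must (i) actually prove the capacity lower bound $\Cp(B_r(x)\cap\Omega^c)\ge c(d,\bar c)r^{d-2}$ from the density hypothesis and the ensuing bound on $\omega_r$, and (ii) justify the comparison principle on the merely measurable set $\Omega$; otherwise replace the whole step by the mean-value argument above.
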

\begin{proof}
Let $x\in\R^d$ be such that that $|B_r(x)\cap\Omega^c|>0$, for every $r>0$. Without loss of generality we can suppose that $x=0$. Setting $w:=w_\Omega$, we have that $\Delta w+1\ge0$ in distributional sense on $\R^d$. Thus, on each ball $B_r(y)$ the function 
$$u(x)=w(x)-\frac{1}{2d}(r^2-|x-y|^2),$$
is subharmonic. By the mean value property
\begin{equation}\label{est3}
w(y)\le \frac{r^2}{2d}+\frac{1}{\omega_d\,r^d}\int_{B_r(y)}w(x)\,dx.
\end{equation}
Let us define \(r_n=4^{-n}\). For any $y\in B_{r_{n+1}}(0)$, equation \eqref{est3} implies
\begin{equation}\label{1136}
\begin{split}
w(y)&\le \frac {r^2_{n}}{4d}+\frac{1}{|B_{2r_{n+1}}(y)|}\int_{B_{2r_{n+1}}(y)} w(x)\,dx\\
\\
&\le  \frac {r^2_{n}}{4d}+\frac{|\Omega\cap B_{2r_{n+1}}(y)|}{|B_{2r_{n+1}}(y)|}\|w\|_{L^\infty(B_{2r_{n+1}(y)})}\\
\\
&\le  \frac {r^2_{n}}{4d}+\left(1-\frac{|\Omega^c\cap B_{r_{n+1}}(0)|}{|B_{2r_{n+1}}|}\right)\|w\|_{L^\infty(B_{r_{n}}(0))}\\
\\
&\le  \frac {4^{-2n}}{4d}+\left(1-2^{-d}\bar{c}\right)\|w\|_{L^\infty(B_{r_{n}}(0))},
\end{split}
\end{equation}
where in the third inequality we have used the inclusion \(B_{r_{n+1}}(0)\subset B_{2r_{n+1}}(y)\) for every \(y\in B_{r_{n+1}}(0)\). Hence setting
\[
a_n=\|w\|_{L^\infty(B_{r_{n}}(0))},
\]
we have
\[
a_{n+1}\le  \frac{8^{-n}}{4d} +(1- 2^{-d}\bar c)a_n,
\]
which easily implies \(a_{n}\le C a_0 4^{-n\beta}\) for some constants \(\beta\) and \(C\) depending only on \(\bar c\). This gives~\eqref{est2}. 
\end{proof}

\begin{prop}\label{soblike}
Let $\Omega\subset\R^d$ be a set of finite Lebesgue measure satisfying an external density estimate. Then the set 
$$\Omega_1:=\left\{x\in\R^d:\ \exists\lim_{r\to0}\frac{|\Omega\cap B_r(x)|}{|B_r(x)|}=1\right\},$$
is open and  $\widetilde{H}^1_0(\Omega)=H^1_0(\Omega_1)$. In particular, if $\Omega$ is a perimeter supersolution, then $\Omega_1$ is open and $\widetilde{H}^1_0(\Omega)=H^1_0(\Omega_1)$. 
\end{prop}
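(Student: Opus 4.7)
The statement breaks into three claims: $\Omega_1$ is open, $\widetilde H^1_0(\Omega)=\widetilde H^1_0(\Omega_1)$, and $\widetilde H^1_0(\Omega_1)=H^1_0(\Omega_1)$. For openness, Lemma~\ref{ext} provides at every $x\in\R^d$ the dichotomy between (a) $B_r(x)\subset\Omega$ a.e.\ for some $r>0$, and (b) $|B_r(x)\cap\Omega^c|\ge\bar c|B_r|$ for every $r>0$; these are mutually exclusive, so $\Omega_1$ is exactly the set of points where (a) holds, and if (a) is realized at $x$ with radius $r$ then $B_{r/2}(x)\subset\Omega_1$ since $B_{r/2}(y)\subset B_r(x)\subset\Omega$ a.e.\ for every $y\in B_{r/2}(x)$. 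By Lebesgue's differentiation theorem applied to $\1_\Omega$, the symmetric difference $\Omega\triangle\Omega_1$ is Lebesgue null, so $\widetilde H^1_0(\Omega)=\widetilde H^1_0(\Omega_1)$ follows directly from \eqref{sob}. The inclusion $H^1_0(\Omega_1)\subset\widetilde H^1_0(\Omega_1)$ is classical (cf.\ Remark~\ref{sobcap}), so the substantive content lies in the reverse inclusion.

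To prove $\widetilde H^1_0(\Omega_1)\subset H^1_0(\Omega_1)$, I would take $u\in\widetilde H^1_0(\Omega_1)$ and, after truncating by $\zeta_R u$ with a smooth function $\zeta_R$ of $|x|$, assume $u$ has compact support. Set $d(x)=\dist(x,\Omega_1^c)$ and pick smooth cutoffs $\phi_\eps$ with $\phi_\eps=0$ on $\{d\le\eps\}$, $\phi_\eps=1$ on $\{d\ge 2\eps\}$, and $|\nabla\phi_\eps|\le C/\eps$. Since $\Omega_1$ is open, $\phi_\eps u$ has compact support inside $\Omega_1$ and therefore lies in $H^1_0(\Omega_1)$ by standard mollification. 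It remains to show $\phi_\eps u\to u$ in $H^1(\R^d)$; the $L^2$ piece and the $\phi_\eps\nabla u$ part of the gradient converge by dominated convergence, using that $u=0$ a.e.\ on $\Omega_1^c$ forces $\nabla u=0$ a.e.\ there as well.

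The delicate term is $\|u\nabla\phi_\eps\|_{L^2}^2\le C\eps^{-2}\int_{\{\eps<d<2\eps\}}u^2\,dx$, which is the main obstacle. I would cover $\{d\le 2\eps\}$ by a Besicovitch family of balls $B_{3\eps}(y_i)$ centered at points $y_i\in\Omega_1^c$; the exterior density estimate gives $|B_{3\eps}(y_i)\cap\Omega_1^c|\ge\bar c|B_{3\eps}|$, and since $u$ vanishes a.e.\ on this portion, the classical Poincar\'e inequality on a ball for functions with a fixed positive lower bound on the relative measure of the zero set yields $\int_{B_{3\eps}(y_i)}u^2\le C\eps^2\int_{B_{3\eps}(y_i)}|\nabla u|^2$. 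Summing with bounded overlap gives $\int_{\{d\le 2\eps\}}u^2\le C\eps^2\int_{\{d\le 5\eps\}}|\nabla u|^2$, hence $\|u\nabla\phi_\eps\|_{L^2}^2\le C\int_{\{d\le 5\eps\}}|\nabla u|^2\to 0$ as $\eps\to 0$ by absolute continuity of the integral, since $\nabla u=0$ a.e.\ on $\Omega_1^c$. The crux is precisely this Poincar\'e inequality with a density hypothesis on the zero set, for which the uniform constant $\bar c$ supplied by the exterior density estimate is exactly what is needed.
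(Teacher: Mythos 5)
Your proof is correct, but it takes a genuinely different route from the paper's. The paper never touches the boundary layer directly: it first shows $\widetilde H^1_0(\Omega)=H^1_0(\{w_\Omega>0\})$ for \emph{any} set of finite measure via the Dal Maso--Garroni penalized approximation $-\Delta u_n+nu_n=nu$ (using the bound $u_n\le nw_\Omega$ quasi-everywhere and strong $H^1$ convergence of $u_n$ to $u$), and only then invokes the density estimate, through Proposition \ref{est}, to identify $\{w_\Omega>0\}$ with $\Omega_1$ up to a set of zero capacity. You instead give the classical real-variable argument: openness of $\Omega_1$ and the identity $\widetilde H^1_0(\Omega)=\widetilde H^1_0(\Omega_1)$ exactly as one should, and then the hard inclusion $\widetilde H^1_0(\Omega_1)\subset H^1_0(\Omega_1)$ by cutting off near $\partial\Omega_1$ and absorbing the bad term $\|u\nabla\phi_\eps\|_{L^2}$ with a Poincar\'e inequality on balls centered in $\Omega_1^c$, where the uniform constant $\bar c$ guarantees the zero set of $u$ occupies a fixed fraction of each ball. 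This is self-contained and avoids capacity theory and the auxiliary function $w_\Omega$ entirely; the trade-off is that the paper's proof delivers the extra identity $H^1_0(\{w_\Omega>0\})=\widetilde H^1_0(\Omega)$ (equation \eqref{propenfun3}, quoted in Remark \ref{propenfun}) as a byproduct, which yours does not. Two small points to tighten: Besicovitch covers the set of centers, so to cover the layer $\{d\le 2\eps\}$ with bounded overlap you should take a maximal $3\eps$-separated net $\{y_i\}\subset\Omega_1^c$ and balls of radius $5\eps$ or $6\eps$ (harmless); and you should note explicitly that alternative (b) of Lemma \ref{ext} holds at every $y\in\Omega_1^c$ because alternative (a) would force $y$ to be a density-one point, and that the estimate transfers from $\Omega^c$ to $\Omega_1^c$ since $|\Omega\symd\Omega_1|=0$.
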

\begin{proof}
Thanks to Lemma \ref{ext},  $\Omega_1$ is an open set.  It remains to prove the equality between the Sobolev spaces. We first note that we have the equality $$\widetilde{H}^1_0(\Omega)= H^1_0(\{w_\Omega>0\}),$$
where $w_\Omega$ is as in Section \ref{pr}. Indeed, by the definitions of $H^1_0$ and $\widetilde H^1_0$, we have  
\[
H^1_0(\{w_\Omega>0\})\subset \widetilde{H}^1_0(\{w_\Omega>0\})\subset \widetilde{H}^1_0(\Omega),
\]
and so it is sufficient to prove that the inclusion $\widetilde{H}^1_0(\Omega)\subset H^1_0(\{w_\Omega>0\})$ holds. For, it is enough to check that for every positive and bounded $u\in\widetilde H^1_0(\Omega)$ we have $u\in H^1_0(\{w_\Omega>0\})$. Reasoning as in \cite[Proposition 3.1]{dmgar}, for every $u\in H^1_0(\Omega)$ such that $0\le u\le 1$ and every $n\in\N$, we consider the weak solution $u_n\in \widetilde H^1_0(\Omega)$ of the equation 
$$-\Delta u_n+nu_n=nu.$$
By the weak maximum principle, we have that $u_n\le n w_\Omega$ a.e. and so, by \cite[Lemma 3.3.30]{hepi05}, $u_n\le nw_\Omega$ quasi-everywhere\footnote{A property \(\mathcal P\) is said to hold quasi-everywhere if \[ {\rm cap}(\{\mathcal P \text{ is false}\})=0\].}, which implies that $u_n\in H^1_0(\{w_\Omega>0\})$. On the other hand, using $u_n-u\in \widetilde H^1_0(\Omega)$ as a test function, we have 
$$\int_\Omega \nabla u_n\cdot\nabla (u_n-u)\,dx+n\int_\Omega|u_n-u|^2\,dx=0,$$ 
and so 
\begin{equation}\label{soblikee1}
\int_\Omega|\nabla (u_n-u)|^2\,dx+n\int_\Omega|u_n-u|^2\,dx=-\int_\Omega\nabla u\cdot\nabla(u_n-u)\,dx\le\|\nabla u\|_{L^2} \|\nabla (u_n-u)\|_{L^2}.
\end{equation}
By \eqref{soblikee1}, we have that $\|\nabla (u_n-u)\|_{L^2}\le \|\nabla u\|_{L^2}$ and $\|u_n-u\|\le n^{-1}\|\nabla u\|_{L^2}$. Thus $u_n$ converges to $u$ weakly in $\widetilde H^1_0(\Omega)$ and, by the first equality in \eqref{soblikee1}, the convergence is also strong in $H^1(\R^d)$. As a consequence $u_n$ converges pointwise to $u$ outside a set of zero capacity (see \cite[Proposition 3.3.33]{hepi05}) and so we have $u\in H^1_0(\{w_\Omega>0\})$.
   
   We now prove that $\Omega_1=\{w_\Omega>0\}$ up to a set of zero capacity. Consider a ball $B\subset\Omega_1$. By the weak maximum principle, $w_B\le w_\Omega$ and so  
\[
\Omega_1\subset\{w_\Omega>0\}.
\]
In order to prove the other inclusion, we first note that since $w_\Omega\in H^1(\R^d)$, then there is a set $N\subset\R^d$ of zero capacity such that for any $x_0\in\R^d\setminus N$ there exists the limit 
$$\lim_{r\to 0}\frac{1}{|B_r(x_0)|}\int_{B_r(x_0)}w_\Omega\,dx=:\widetilde{w}_\Omega(x_0),$$
and $\widetilde{w}_\Omega$ coincides with $w_\Omega$ again up to a set of zero capacity (see \cite[Section 4.8]{evgar}). By Proposition \ref{est}, $\widetilde{w}_\Omega=0$ on $\R^d\setminus\Omega_1$ which gives the other inclusion.
\end{proof}

%

\begin{teo}\label{thfpero}
Let $k_1,\dots,k_p\in\N$ and suppose that $f:\R^p\to\R$ satisfies the assumptions $(f1)$, $(f2)$, and $(f3)$. Then there exists a solution $\Omega\subset\R^d$ of \eqref{lbfperint}.
Moreover, we have that
\begin{enumerate}[(i)]
\item every solution of \eqref{lbfperint} is a solution of \eqref{lbfper};
\item every solution of \eqref{lbfper} is equivalent to a solution of \eqref{lbfperint}.
\end{enumerate}
\end{teo}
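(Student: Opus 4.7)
The plan is to combine the existence theorem for measurable minimizers, Theorem \ref{thfper}, with the identification of Sobolev spaces in Proposition \ref{soblike}, in order to produce an open minimizer equivalent to a measurable one. I would first pick a solution $\Omega^\ast$ of \eqref{lbfper}, which exists by Theorem \ref{thfper}. By Lemma \ref{super}, $\Omega^\ast$ is a perimeter supersolution, and therefore by Lemma \ref{ext} it satisfies an exterior density estimate. Proposition \ref{soblike} then associates to $\Omega^\ast$ the \emph{open} set
\[
\Omega_1^\ast:=\left\{x\in\R^d:\ \lim_{r\to 0}\frac{|\Omega^\ast\cap B_r(x)|}{|B_r(x)|}=1\right\},
\]
for which $\widetilde H^1_0(\Omega^\ast)=H^1_0(\Omega_1^\ast)$.

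The next step is to show that $\Omega_1^\ast$ is admissible for \eqref{lbfperint} and attains the measurable minimum. By the Lebesgue density theorem, $\Omega^\ast\triangle\Omega_1^\ast$ is Lebesgue negligible, so $|\Omega_1^\ast|=|\Omega^\ast|<\infty$; since the De Giorgi perimeter depends only on the $L^1$-equivalence class of the characteristic function, we also have $P(\Omega_1^\ast)=P(\Omega^\ast)=1$. The identity $\widetilde H^1_0(\Omega^\ast)=H^1_0(\Omega_1^\ast)$, together with the min-max characterization of the eigenvalues, gives $\widetilde\lambda_{k_j}(\Omega^\ast)=\lambda_{k_j}(\Omega_1^\ast)$ for every $j$, so that $\Omega_1^\ast$ is admissible for \eqref{lbfperint} and the value $f(\lambda_{k_1}(\Omega_1^\ast),\dots,\lambda_{k_p}(\Omega_1^\ast))$ equals the minimum value $m$ of \eqref{lbfper}. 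On the other hand, every open competitor $\Omega$ for \eqref{lbfperint} is also a measurable competitor for \eqref{lbfper}, and the inclusion $H^1_0(\Omega)\subseteq\widetilde H^1_0(\Omega)$ forces $\widetilde\lambda_{k_j}(\Omega)\le\lambda_{k_j}(\Omega)$ for every $j$; combining this with the monotonicity $(f3)$ yields $m\le f(\lambda_{k_1}(\Omega),\dots,\lambda_{k_p}(\Omega))$. Hence $\Omega_1^\ast$ solves \eqref{lbfperint} and the two minima coincide, which proves existence together with (ii).

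For (i), let $\Omega$ be any solution of \eqref{lbfperint}. Viewing $\Omega$ as a measurable set and using $\widetilde\lambda_{k_j}(\Omega)\le\lambda_{k_j}(\Omega)$ with the monotonicity of $f$, we obtain
\[
m\le f\left(\widetilde\lambda_{k_1}(\Omega),\dots,\widetilde\lambda_{k_p}(\Omega)\right)\le f\left(\lambda_{k_1}(\Omega),\dots,\lambda_{k_p}(\Omega)\right)=m,
\]
so both inequalities are equalities and $\Omega$ is a solution of \eqref{lbfper} as well. The only delicate point in the whole argument is the passage from the Sobolev-like space $\widetilde H^1_0$ to the classical $H^1_0$; this is already taken care of by Proposition \ref{soblike}, whose applicability is guaranteed by the exterior density estimate coming from the perimeter supersolution property provided by Lemma \ref{super}. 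Everything else reduces to a short comparison between the two minimum values.
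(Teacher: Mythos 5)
Your proof is correct and follows essentially the same route as the paper: pass from a measurable minimizer $\Omega^\ast$ of \eqref{lbfper} to the open set $\Omega_1^\ast$ of density-one points via Lemma \ref{super} and Proposition \ref{soblike}, then compare the two minimum values using $H^1_0(U)\subset\widetilde H^1_0(U)$ for open competitors $U$. If anything, your write-up is slightly more careful than the paper's, since you phrase the comparison at the level of the values of $f$ (as one must for a general functional, where optimality does not give an eigenvalue-by-eigenvalue inequality) and you explicitly check that $\Omega_1^\ast$ is admissible, i.e.\ that $P(\Omega_1^\ast)=P(\Omega^\ast)=1$ and $|\Omega_1^\ast|<\infty$.
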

\begin{proof}
Let $\Omega$ be a solution of \eqref{lbfper} and let $\Omega_1$ be as in Proposition \ref{soblike}. Let $ U\subset\R^d$ be an open set with perimeter $P(U)=1$. Using the inclusion $H^1_0(U)\subset \widetilde{H}^1_0(U)$, the optimality of $\Omega$ and Proposition \ref{soblike}, we have that for any $k\in\N$,
\begin{equation}\label{th2e1}
\lambda_k(U)\ge\widetilde\lambda_k(U)\ge \widetilde\lambda_k(\Omega)=\lambda_k(\Omega_1),
\end{equation}
i.e. $\Omega_1$ is a solution of \eqref{lbfperint}, which proves the existence part and claim $(ii)$. Suppose now that $U$ is an optimal set for \eqref{lbfperint}. Then all the equalities in \eqref{th2e1} must be equalities and so, we have claim $(i)$.
\end{proof}




\section{Regularity of the free boundary}\label{reg}
In this section we study the regularity of the reduced boundary $\partial\Omega$ of any solution $\Omega$ of \eqref{lbfperint}. The goal is to show that solutions of \eqref{lbfperint} are quasi-minimizers for the perimeter and then to apply some classical regularity results. In order to prove the quasi-minimality 
\begin{equation*}
P(\Omega,B_r)\le P(\widetilde\Omega,B_r)+Cr^{d},\qquad \forall\  \widetilde \Omega\Delta \Omega \subset B_r(x),
\end{equation*}
of the optimal set $\Omega$, we first note that, thanks to Propositions \ref{est}   and \ref{soblike}, \(w_\Omega\) is continuous. Moreover since \(\Omega\) is a perimeter supersolution we can show that \({\rm dist}(x, \Omega^c)\) is super harmonic in \(\Omega\) in the viscosity sense. Hence we can apply the maximum principle in order to prove that \(w_\Omega\) is Lipschitz. An estimate on the perimeter of the variations of an energy subsolution will finally give the desired quasi-minimality property of \(\Omega\).
%
%
   The following lemma is classical and so, we only give a reference for the proof.
\begin{lemma}\label{gradest}
Let $u\in H^1(B_r)$ be such that $-\Delta u=f\in L^\infty(B_r)$ in a weak sense in the ball $B_r$. Then we have 
\begin{equation}
\|\nabla u\|_{L^\infty(B_{r/2})}\le C_d\|f\|_{L^\infty(B_{r})}+\frac{2d}{r}\|u\|_{L^{\infty}(B_r)}.
\end{equation}
\end{lemma}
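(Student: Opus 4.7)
The plan is to decompose $u$ into its harmonic part and its Poisson-equation part with zero boundary data and estimate each piece separately by classical tools. Set $u = h + v$ on $B_r$, where $h \in H^1(B_r)$ is the harmonic extension of the boundary trace of $u$ (i.e. $h$ is harmonic in $B_r$ with $u - h \in H^1_0(B_r)$), and $v := u - h$ weakly solves $-\Delta v = f$ with zero trace on $\partial B_r$. By the weak maximum principle one has $\|h\|_{L^\infty(B_r)} \le \|u\|_{L^\infty(B_r)}$.

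For the harmonic piece, I would invoke the standard interior gradient estimate for harmonic functions: since each $\partial_i h$ is itself harmonic in $B_r$, for $x \in B_{r/2}$ the ball $B_{r/2}(x)$ lies in $B_r$ and the mean value property together with the divergence theorem gives
\[
|\partial_i h(x)| \;=\; \frac{1}{|B_{r/2}|}\Bigl|\int_{\partial B_{r/2}(x)} h\,\nu_i\, d\H^{d-1}\Bigr| \;\le\; \frac{|\partial B_{r/2}|}{|B_{r/2}|}\, \|h\|_{L^\infty(B_r)} \;=\; \frac{2d}{r}\|u\|_{L^\infty(B_r)},
\]
which produces exactly the second term on the right-hand side.

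For the Poisson piece $v$, the natural tool is the Dirichlet Green's function $G_r(\cdot,\cdot)$ on $B_r$, which furnishes the representation $v(x) = \int_{B_r} G_r(x,y) f(y)\,dy$. Differentiating under the integral and using the pointwise bound $|\nabla_x G_r(x,y)| \le C_d\,|x-y|^{1-d}$ (this is classical and can be read off the explicit formula for $G_r$, up to the harmonic corrector which is smooth and bounded in $B_{r/2}$; see for instance Gilbarg--Trudinger, Chapter~2), one obtains
\[
\|\nabla v\|_{L^\infty(B_{r/2})} \;\le\; \|f\|_{L^\infty(B_r)} \sup_{x \in B_{r/2}}\int_{B_r} |\nabla_x G_r(x,y)|\,dy \;\le\; C_d\,\|f\|_{L^\infty(B_r)},
\]
where the integral is uniformly bounded on $B_{r/2}$ by standard Newtonian potential estimates (the kernel $|x-y|^{1-d}$ is integrable in $y$, and the boundary corrector contributes a lower-order term). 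Summing the two bounds gives the stated inequality.

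No step is really delicate here; the only mildly technical point is the uniform $L^1$ bound on $\nabla_x G_r(x,\cdot)$ for $x$ in the smaller ball $B_{r/2}$, which is where all the dimensional constants (and any implicit $r$-factor) are collected into $C_d$. Because both ingredients—the mean value property for $\nabla h$ and the Newtonian potential estimate for $\nabla v$—are completely standard, this is precisely why the authors content themselves with a reference rather than a proof.
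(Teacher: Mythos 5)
The paper offers no proof of this lemma at all --- it simply cites Gilbarg--Trudinger --- so there is nothing internal to compare against; your decomposition $u=h+v$ into the harmonic extension of the boundary data plus the zero-trace Poisson solution, with the mean-value/divergence-theorem estimate for $\nabla h$ and the Green's function (Newtonian potential) estimate for $\nabla v$, is exactly the standard argument and is essentially correct. Two small points. First, in the harmonic part you bound each component $\partial_i h$ by $\tfrac{2d}{r}\|u\|_{L^\infty(B_r)}$; to get the same constant for $|\nabla h|$ apply the identical mean-value argument to the directional derivative $\partial_e h$ in the maximizing direction $e$ (also harmonic), rather than summing over components, which would cost an extra $\sqrt d$. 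Second, and more substantively, your claim that $\sup_{x\in B_{r/2}}\int_{B_r}|\nabla_x G_r(x,y)|\,dy$ is bounded by a purely dimensional constant is not quite right: $\int_{B_r}|x-y|^{1-d}\,dy\sim C_d\,r$, so the natural (and dimensionally correct, as a scaling $u\mapsto u(r\cdot)$ shows) estimate is $\|\nabla v\|_{L^\infty(B_{r/2})}\le C_d\,r\,\|f\|_{L^\infty(B_r)}$. The inequality as printed in the lemma, with $C_d\|f\|_{L^\infty}$ and no factor of $r$, can only hold for $r$ bounded (it fails for large $r$, e.g.\ for $u=A\sin(kx_1)$ with $k$ small); this is a defect of the statement itself rather than of your argument --- in the paper it is only ever applied with $r$ bounded by $1$ or by $\operatorname{diam}\Omega$ --- but your write-up should either keep the factor $r$ or state explicitly that you assume $r\le 1$.
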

\begin{proof}
See \cite{GT}.
\end{proof}

\begin{prop}\label{holder}
Let $\Omega\subset\R^d$ be a perimeter supersolution. Then $w_{\Omega}:\R^d\to\R$ is H\"older continuous and
\begin{equation}\label{holder2}
\left|w_{\Omega}(x)-w_\Omega(y)\right|\le C|x-y|^{\beta},
\end{equation}
where $\beta$ is the constant from Proposition \ref{est}.
\end{prop}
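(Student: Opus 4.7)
The plan is to combine the boundary decay from Proposition \ref{est} with the interior gradient estimate of Lemma \ref{gradest}. By Proposition \ref{soblike}, $w_\Omega\in H^1_0(\Omega_1)$ for the open set $\Omega_1$, and standard elliptic regularity shows $w_\Omega\in C^\infty(\Omega_1)$ with $-\Delta w_\Omega=1$ pointwise there, while $w_\Omega\equiv 0$ outside $\Omega_1$. Write $\delta(x):=\dist(x,\Omega_1^c)$ and $M:=\|w_\Omega\|_{L^\infty}$; we may also assume $\beta\le 1$ throughout (otherwise replace $\beta$ by $\min\{\beta,1\}$).

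The first step is to upgrade Proposition \ref{est} to the pointwise decay $w_\Omega(x)\le C\,\delta(x)^\beta$. Any $z\in\Omega_1^c$ falls under case (b) of Lemma \ref{ext} (case (a) would put $z$ in $\Omega_1$), so in particular the hypothesis $|B_r(z)\cap\Omega^c|>0$ of Proposition \ref{est} holds. Picking $z\in\Omega_1^c$ that realizes $\delta(x)$ and applying the proposition on $B_{2\delta(x)}(z)\ni x$ yields $w_\Omega(x)\le (2\delta(x))^\beta M$; the same reasoning gives $\|w_\Omega\|_{L^\infty(B_{\delta(x)}(x))}\le C\,\delta(x)^\beta$, since $B_{\delta(x)}(x)\subset B_{2\delta(x)}(z)$.

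The second step is the interior gradient bound. On the ball $B_{\delta(x)}(x)\subset\Omega_1$ we have $-\Delta w_\Omega=1$, so Lemma \ref{gradest} combined with Step~1 gives
\[
|\nabla w_\Omega(x)|\le C_d+\frac{2d}{\delta(x)}\|w_\Omega\|_{L^\infty(B_{\delta(x)}(x))}\le C\bigl(1+\delta(x)^{\beta-1}\bigr).
\]

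Finally, fix $x,y\in\R^d$ and set $h:=|x-y|$; for $h\ge 1$ the estimate \eqref{holder2} is trivial from $\|w_\Omega\|_{L^\infty}\le M$, so assume $h<1$. If $\min\{\delta(x),\delta(y)\}\le 2h$, the pointwise decay of Step~1 immediately gives $|w_\Omega(x)-w_\Omega(y)|\le Ch^\beta$. Otherwise $\delta(x)>2h$ (say), the segment joining $x$ to $y$ stays inside $B_{\delta(x)/2}(x)\subset\Omega_1$ with $\delta\ge\delta(x)/2$ along it, and integrating the bound of Step~2 together with $\beta-1\le 0$ yields
\[
|w_\Omega(x)-w_\Omega(y)|\le C\bigl(1+\delta(x)^{\beta-1}\bigr)\,h\le Ch^\beta,
\]
since $\delta(x)^{\beta-1}h\le (2h)^{\beta-1}h=2^{\beta-1}h^\beta$. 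The only real care needed is the bookkeeping of constants across the regimes $\delta\lesssim 1$ and $\delta\gtrsim 1$; once the pointwise decay of Step~1 is available, the rest is routine.
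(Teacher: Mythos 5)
Your proposal is correct and follows essentially the same route as the paper: the boundary decay of Proposition \ref{est} applied at a point of $\Omega_1^c$ realizing the distance, the interior gradient bound of Lemma \ref{gradest} rescaled on $B_{\delta(x)}(x)$, and the two-regime split according to whether $|x-y|$ is comparable to $\dist(x,\partial\Omega_1)$. Your Step 1 merely makes explicit (via case (b) of Lemma \ref{ext}) why Proposition \ref{est} is applicable at boundary points, a detail the paper leaves implicit.
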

\begin{proof}
Thanks to Proposition \eqref{soblike}, up to a set of capacity zero, we can assume that \(\Omega_1\) is open and that \(w_\Omega\) is the classical solution, with Dirichlet boundary conditions, of \(-\Delta w_{\Omega}=1\) in \(\Omega_1\).
Consider two distinct points $x,y\in\R^d$. In case both \(x\) and \(y\) belong to \(\Omega_1^c\), the estimate \eqref{holder2} is trivial. Let us assume that \(x\in \Omega_1\) and let \(x_0\in \partial \Omega_1\) be such that
\[
|x-x_0|=\dist(x,\partial \Omega_1).
\]
We distinguish two cases:
\begin{itemize}
\item Suppose that $y\in\R^d$ is such that
\[
2|x-y|\ge \dist(x,\partial \Omega_1).
\]
Hence \(x,y\in B_{4|x-y|}(x_0)\) and 
  by Proposition \ref{est}, we have that 
  \[
  w_{\Omega}(x)\le C|x-y|^\beta\quad \text{and}\quad  w_{\Omega}(y)\le C|x-y|^\beta.
  \]
Thus we obtain 
\begin{equation}\label{holder3}
\left| w_{\Omega}(x)-w_{\Omega}(y)\right|\le 2C|x-y|^{\beta}.
\end{equation} 
\item Assume that $y\in\R^d$ is such that
\[
2|x-y|\le \dist(x,\partial \Omega_1).
\]
Applying Lemma \ref{gradest} to \(w_{\Omega}\) in \(B_{\dist(x,\partial \Omega_1)}(x)\subset \Omega_1\) we obtain
\begin{equation}\label{holder4}
\|\nabla w_\Omega\|_{L^\infty(B_{\dist(x,\partial \Omega_1)/2}(x))}\le \frac{C_d\|w\|_{L^\infty(B_{\dist(x,\partial \Omega_1)}(x))}}{\dist(x,\partial \Omega_1)}\le C_d\dist(x,\partial \Omega_1)^{\beta-1},
\end{equation}
which, since \(\beta< 1\), together with our assumption and the mean value formula implies
\[
|w_{\Omega}(x)-w_{\Omega}(y)|\le C_d\dist(x,\partial \Omega_1)^{\beta-1}|x-y|\le |x-y|^{\beta}.
\]
\end{itemize} 
\end{proof}

In the following Lemma we show that a perimeter supersolution has positive mean curvature in the viscosity sense. This is done showing that the function \(d(x,\Omega^c)\) is super harmonic in \(\Omega\) in the viscosity sense  
(see \cite{CC} for a nice account of theory of viscosity solutions). In case \(\partial \Omega\) is smooth this easily implies that the mean curvature of \(\partial \Omega\), computed with respect to the interior normal, is positive (see for instance \cite[Section 14.6]{GT}). A similar observation  already appeared in \cite{CCo}, in the study of the regularity of minimal surfaces, and in \cite{jls, Ma}, in the study of free boundary type problems.
 
 We say that \emph{ \(\varphi\) touches \(d_\Omega\)} from below at \(x_0\) if
  \[
 d_\Omega(x_0)-\varphi(x_0)=\min_{\overline \Omega} \big\{d_\Omega-\varphi \big\}.
 \]
\begin{lemma}\label{dist}
Let $\Omega\subset\R^\dim$ be a perimeter supersolution. Consider the function $d_{\Omega}(x)=d(x,\Omega^c)$. Then for each $\varphi \in C^\infty_c(\Omega)$, touching \(d_{\Omega}\) from below at \(x_0\in \Omega\),   we have $\Delta \varphi (x_0)\le 0$.
\end{lemma}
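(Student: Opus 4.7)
I argue by contradiction: assume $\Delta\varphi(x_0)>0$. The strategy is to construct a competitor $\widetilde\Omega\supsetneq \Omega$ with $P(\widetilde\Omega)<P(\Omega)$, contradicting the perimeter supersolution property.

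\textbf{Normalization and first-order data.} Adding a constant to $\varphi$ preserves the touching condition and does not affect $\Delta\varphi$, so I may assume $\varphi(x_0)=d_\Omega(x_0)=:r>0$, whence $\varphi\le d_\Omega$ on $\overline{\Omega}$. Pick $y_0\in\partial\Omega$ realizing $|x_0-y_0|=r$ and set $\nu:=(y_0-x_0)/r$. From $d_\Omega(x)\le|x-y_0|$ we get $\varphi(x)\le|x-y_0|$ with equality at $x_0$; differentiating yields
\[
\nabla\varphi(x_0)=-\nu,\qquad D^2\varphi(x_0)\le \tfrac{1}{r}\bigl(I-\nu\otimes\nu\bigr),
\]
so $|\nabla\varphi(x_0)|=1$ and $D^2\varphi(x_0)(\nu,\nu)\le 0$. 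Consequently
\[
2\alpha:=\Delta\varphi(x_0)-D^2\varphi(x_0)(\nu,\nu)\ge \Delta\varphi(x_0)>0.
\]

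\textbf{Competitor.} For small $\eta,\sigma>0$ let $E_\eta$ be the connected component of $\{\varphi>r-\eta\}$ containing $x_0$. After the harmless perturbation $\varphi\mapsto \varphi-\delta|x-x_0|^2$ (with $\delta>0$ small, preserving $\Delta\varphi(x_0)>0$ and making $x_0$ a strict interior touching point), $E_\eta$ is a smooth open set that shrinks to $\{x_0\}$ as $\eta\to 0$. Define
\[
\widetilde\Omega:=\Omega\cup \bigl\{x\in\R^d:\dist(x,E_\eta)<r-\eta+\sigma\bigr\}.
\]
Because $d_\Omega(x_0)=r$ \emph{exactly} and $E_\eta$ concentrates at $x_0$, the dilation radius $r-\eta+\sigma$ just barely exceeds the distance from $E_\eta$ to $\partial\Omega$ at $y_0$, pushing the resulting set past $y_0$ into $\Omega^c$ and producing a small open bulge in $\widetilde\Omega\setminus\Omega$. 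Thus $\widetilde\Omega\supsetneq\Omega$, and the perimeter supersolution property yields $P(\widetilde\Omega)\ge P(\Omega)$.

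\textbf{Perimeter expansion and contradiction.} The outer boundary of the competitor is the smooth $(r-\eta+\sigma)$-dilation of $\partial E_\eta$, to which the Steiner/tube formula applies; the mean curvature of $\partial E_\eta$ at $x_0$ is computed by the level-set formula
\[
\operatorname{div}\!\left(\frac{\nabla\varphi}{|\nabla\varphi|}\right)(x_0)=\Delta\varphi(x_0)-D^2\varphi(x_0)(\nu,\nu)=2\alpha>0.
\]
Combined with the exterior density estimate of Lemma \ref{ext} near $y_0$ (ensuring nondegenerate structure of $\partial\Omega$ at the tangency point), one can then split $P(\widetilde\Omega)-P(\Omega)$ into the contribution of the outer dilation shell $\partial\widetilde\Omega\cap\Omega^c$ and the ``erased'' piece $\partial\Omega\cap \widetilde\Omega$, and expand both to leading order in $\eta,\sigma$. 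The strict positivity $2\alpha>0$ makes the leading term of $P(\widetilde\Omega)-P(\Omega)$ strictly negative for $\eta,\sigma$ sufficiently small, giving $P(\widetilde\Omega)<P(\Omega)$ and contradicting the supersolution inequality.

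\textbf{Main obstacle.} The technical heart of the argument is the Steiner/tube expansion of $P(\widetilde\Omega)-P(\Omega)$: one must simultaneously account for the curvature of the outer dilation (where $\Delta\varphi(x_0)>0$ enters as a positive mean curvature of $\partial E_\eta$) and for the geometry of $\partial\Omega$ inside the bulge (controlled by Lemma \ref{ext}), and show that the former strictly dominates. The hypothesis $\Delta\varphi(x_0)>0$ is used precisely as the sign of the leading curvature correction.
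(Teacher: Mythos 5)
Your first-order analysis is correct and matches the paper's: $\nabla\varphi(x_0)=(x_0-y_0)/|x_0-y_0|$, $D^2\varphi(x_0)(\nu,\nu)\le 0$, and hence the level surface of $\varphi$ through $x_0$ has strictly positive mean curvature $\dive(\nabla\varphi/|\nabla\varphi|)(x_0)\ge\Delta\varphi(x_0)>0$ with respect to the normal pointing towards $x_0$. The overall strategy (an outer competitor of strictly smaller perimeter, contradicting the supersolution property) is also the right one. But the construction and the perimeter comparison both have genuine gaps. First, your set $E_\eta$ does \emph{not} shrink to $\{x_0\}$: since $|\nabla\varphi(x_0)|=1$, the point $x_0$ is not a local maximum of $\varphi$ (and subtracting $\delta|x-x_0|^2$ does not change this), so the connected component of $\{\varphi>r-\eta\}$ containing $x_0$ converges to a set having $x_0$ on its boundary, not to a point. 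This breaks the claimed localization of the bulge near $y_0$, and it also makes the curvature bookkeeping inconsistent: if $E_\eta$ really concentrated at a point, its boundary would be highly convex with respect to the outward normal and the outer parallel surface would \emph{grow}; the quantity $2\alpha$ you compute is the mean curvature of the fixed level surface $\{\varphi=r\}$ at $x_0$, not of the boundary of a shrinking set.

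Second, and more seriously, the decisive inequality $\H^{d-1}(\partial\widetilde\Omega\cap\Omega^c)<\H^{d-1}(\partial\Omega\cap\widetilde\Omega)$ is never actually proved. The tube/Steiner expansion controls only the new outer shell; the ``erased'' piece $\partial\Omega\cap\widetilde\Omega$ belongs to a set which is merely of finite perimeter, so there is nothing to ``expand to leading order'', and the exterior density estimate of Lemma \ref{ext} gives volume density information, not a lower bound on the area of $\partial\Omega$ inside the bulge relative to the area of the new boundary. This is exactly the step where the paper's proof does real work: it translates the super-level set $\{\varphi\ge h\}$ by $(h+\eps)e_d$ and uses the distance function $d_\eps$ to the translated level surface as a calibration, with $\Delta d_\eps>0$ in the bulge, $|\nabla d_\eps|\le 1$ everywhere, and $\nabla d_\eps=-\nu_{\Omega_\eps}$ on the new boundary; the divergence theorem then yields $\H^{d-1}(\Omega_\eps\cap\partial\Omega)>\H^{d-1}(\Omega^c\cap\partial\Omega_\eps)$ with no regularity assumption on $\partial\Omega$. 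To repair your argument you would need an analogous calibration (e.g.\ $\nabla\dist(\cdot,\{\varphi\ge r\})$), together with the localization provided by making the touching strict away from $x_0$, rather than the heuristic tube expansion.
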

\begin{proof}
Suppose, by contradiction, that there are  point $x_0\in\Omega$ and a function $\varphi \in C^\infty_c(\Omega)$ touching \(d_\Omega\) from below at \(x_0\) for which $\Delta \varphi (x_0)>0$. Up to a vertical translation, we can assume that \(\varphi (x_0)=d_\Omega(x_0)>0\). Furthermore, by considering a a regularized version of the function $\widetilde \varphi(x)=\left(\varphi (x)-|x-x_0|^4\right)^+$, we can also suppose that $\varphi (x)<d_\Omega(x)$, for every $x\in\Omega$ different from $x_0$.

 Let $y_0\in\partial\Omega$ be such that $d_\Omega (x_0)=|x_0-y_0|$, then 
 \begin{equation}\label{gradphi}
 \nabla \varphi (x_0)=\frac{x_0-y_0}{|x_0-y_0|}.
 \end{equation}
In order to prove this last equality, we first notice that, since $\phi$ is smooth, the inequality
 \begin{equation}\label{pol}
 \varphi (x)-\varphi(x_0)\le d_\Omega(x)-d_\Omega(x_0)\le |x-x_0|,
 \end{equation}
gives \(|\nabla \varphi (x_0)|\le 1\). Moreover, defining $x_t:=tx_0+(1-t)y_0$, we have 
 \[
 d_\Omega (x_t)=|x_t-y_0|=td_\Omega(x_0),
 \] 
 hence 
\begin{equation*}
-\nabla \varphi(x_0)\cdot \frac{x_0-y_0}{|x_0-y_0|}=\lim_{t\to 1} \frac{\varphi(x_t)-\varphi(x_0)}{|x_t-x_0|}\le\lim_{t\to 1}\frac{d_\Omega(x_t)-d_\Omega(x_0)}{|x_t-x_0|}= -1, 
\end{equation*}
which, together with \eqref{pol}, proves \eqref{gradphi}.

Let us now set $h:=\varphi(x_0)=d_\Omega(x_0)$ and choose a system of coordinates such that $x_0=0$ and the unit vector $e_\dim$ is parallel to \(x_0-y_0\). Since $\frac{\partial \varphi}{\partial x_\dim}\neq0$, by the Implicit Function Theorem, there is a $(\dim-1)$-dimensional ball $B^{\dim-1}_r\subset\R^d$ and a function $\phi\in C^\infty(B_r^{\dim-1})$  such that $\{\varphi=h\}$ is the graph of $\phi$ over $B_r^{\dim-1}$, i.e.
\begin{equation}
\{\varphi=h\}\cap \left(B_r^{\dim-1}\times(-r,r)\right)=\{x_\dim=\phi(x_1,\dots,x_{\dim-1})\}.
\end{equation}   
Since $d_\Omega\ge \varphi$ with equality only at \(x_0=0\), we  have
\begin{equation}\label{incllevel}
\{\varphi\ge h\}\subset \{d_\Omega\ge h\}\qquad \text{and}\qquad \{\varphi=h\}\cap \{d_\Omega=h\}=\{0\},
\end{equation}
which implies that  \(0\)  is a  (strict) local minimum of the function 
$$(x_1,\dots,x_{\dim-1})\mapsto x_1^2+\dots+x_{\dim-1}^2+(\phi-h)^2.$$
Hence $\frac{\partial\phi}{\partial x_1}(0)=\dots=\frac{\partial\phi}{\partial x_{\dim-1}}(0)=0$. On the other hand, since 
$$\varphi(x_1,\dots,x_{\dim-1},\phi(x_1,\dots,x_{\dim-1}))\equiv0,$$ 
we get, denoting with the subscripts the partial derivatives,
$$\varphi_j+\phi_j\varphi_\dim=0,$$
$$\varphi_{jj}+2\phi_j\varphi_{j\dim}+\phi_{jj}\varphi_\dim+\phi_j^2u_{\dim\dim}=0,$$
for each $j=1,\dots,\dim-1$, and thus we obtain $\varphi_{jj}(0)+\phi_{jj}(0)\varphi_\dim(0)=0$. By the contradiction assumption
$$0< \sum_{j=1}^\dim \varphi_{jj}(0)=\varphi_{\dim\dim}(0)-\varphi_\dim(0)\sum_{j=1}^{\dim-1}\phi_{jj}(0)\le -\varphi_\dim(0)\sum_{j=1}^{\dim-1}\phi_{jj}(0),$$
where the last inequality is due to 
\begin{equation*}
\varphi_{\dim\dim}(0)=\lim_{t\to0}\frac{\varphi(te_\dim)+\varphi(-te_\dim)-2\varphi(0)}{t^2}\le\lim_{t\to0}\frac{d_\Omega(te_\dim)+d_\Omega(-te_\dim)-2d_\Omega(0)}{t^2}\le0.
\end{equation*}
Since, by \eqref{gradphi}, we have $\varphi_\dim(0)=1$, we deduce that $\Delta\phi(0)< 0$. 

\begin{figure}[t]
\begin{center}
\includegraphics[scale=0.55]{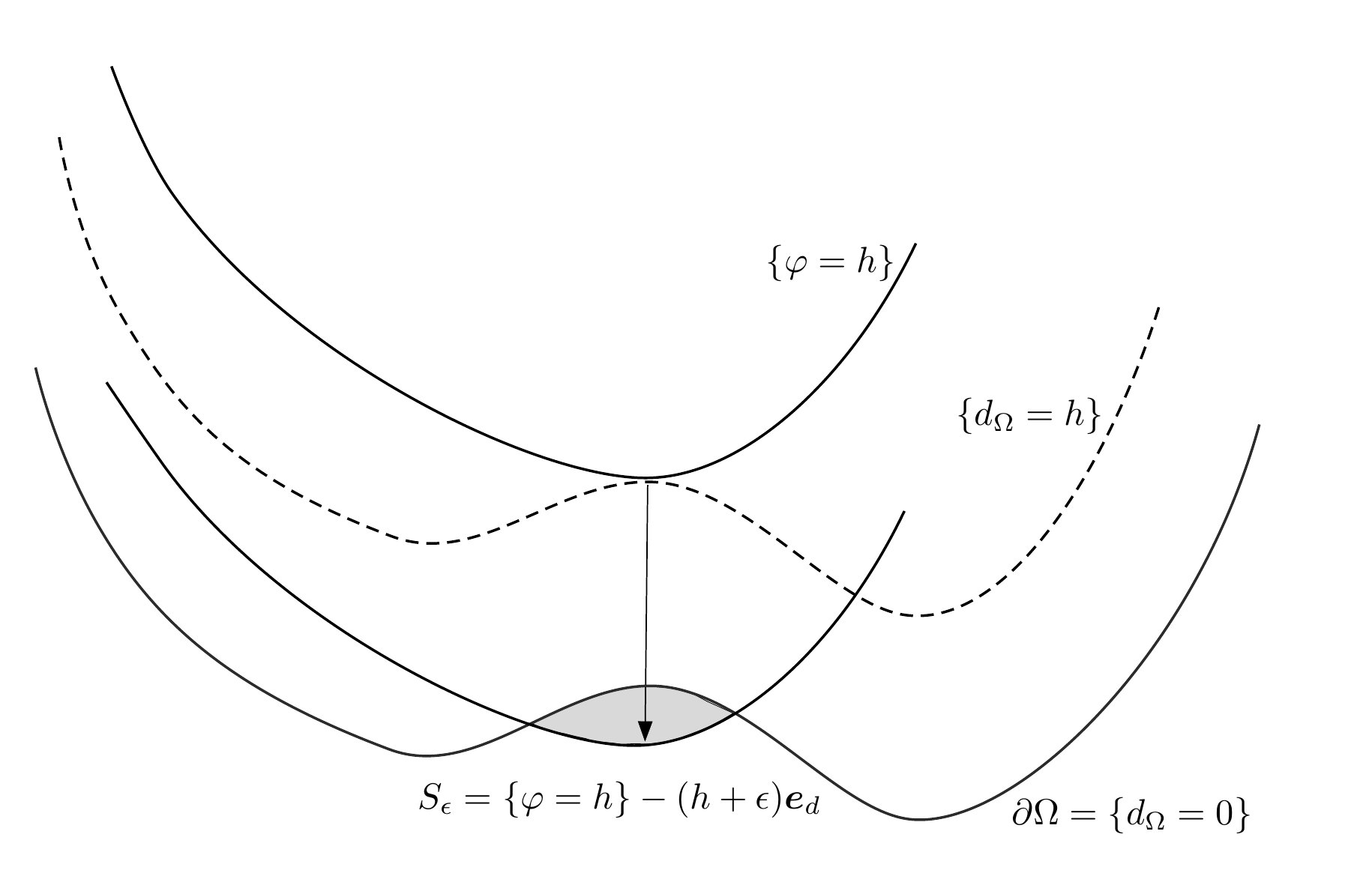}
\caption{Proof of Lemma \ref{dist}: applying the Divergence Theorem to the grey region, we obtain a contradiction to the minimality of \(\Omega\) if \(\Delta \varphi>0\).}
\label{fig}
\end{center}
\end{figure}

Let $d_S :T^+\to\R$ be the distance to the surface $S=\{x_\dim=\phi(x_1,\dots,x_{\dim-1})\}$. i.e. $d_S(x)=d(x,S)$, where $T$ is a tubular neighbourhood of $S$ and $T^+=\{x_\dim>\phi\}$. Then $d_S\in C^\infty(T^+\cup S)$,
\[
\frac{\partial d_S}{\partial x_d}(0)=1\qquad \text{and}\qquad \frac{\partial^2d_S}{\partial x_d^2}(0)=0.
\]
 Arguing as above, we see that $\Delta d_S(0)=-\Delta\phi(0)>0$ and so, $\Delta d_S>0$, in a neighbourhood of $0$ in $T^+\cup S$.
 
By \eqref{incllevel} we see  that for  $r$ small enough, there is some $\epsilon>0$ such that 
 \[
 \{h\le d_\Omega<h+\epsilon\}\cap\{\varphi\ge h\}\subset B_r. 
 \]
If we  define the set
  \[
  \Omega_\epsilon:=\Omega\cup\left(\{\varphi\ge h\}-(h+\epsilon)e_\dim\right),
  \]
then $\Omega_\epsilon\setminus\Omega\subset B_r(-(h+\epsilon)e_\dim)$. Denoting with \(d_\epsilon\) the distance from 
  $$S_\epsilon=\{\varphi=h\}-(h+\epsilon)e_\dim,$$
  we see that \(\Delta d_\epsilon >0\) in \(B_r(-(h+\epsilon)e_\dim)\), since $ d_\epsilon(x)=d_S(x+(h+\epsilon)e_\dim)$. Hence, by the Divergence Theorem, and recalling that on $S_\epsilon$, \(\nabla d_\epsilon=-\nu_{\Omega_\epsilon}\), where $\nu_{\Omega_\epsilon}$ is the exterior normal to \(\Omega_\epsilon\),  we have
\begin{equation}\label{dist7}
\begin{split}
0< \int_{\Omega_\epsilon\setminus\Omega}\Delta d_\epsilon\, dx&=-\int_{\Omega_\epsilon\cap\partial \Omega}\nabla d_\epsilon\cdot \nu_\Omega\,d\H^{\dim-1}-\int_{\Omega^c\cap\partial\Omega_\epsilon}\,d\H^{\dim-1}\\
&\le \H^{d-1}\big({\Omega_\epsilon\cap\partial \Omega}\big)-\H^{d-1}\big(\Omega^c\cap\partial\Omega_\epsilon\big),
\end{split}
\end{equation}
contradicting the perimeter minimality of \(\Omega\) with respect to outer variations (see Figure \ref{fig}).
\end{proof}

We are now in position to prove the Lipschitz continuity of $w_\Omega$ using \(d_\Omega\) as a barrier (see \cite[Chapter 14]{GT} for  similar proofs in the smooth case).
\begin{prop}\label{lip}
Suppose that the open set $\Omega\subset\R^\dim$ is a perimeter supersolution. Then the energy function $w_\Omega:\R^d\to\R$, defined as zero on $\Omega^c$, is Lipschitz continuous.
\end{prop}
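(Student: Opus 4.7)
The plan is to show that $|\nabla w_\Omega|$ is uniformly bounded on $\Omega$, which together with $w_\Omega\equiv 0$ on $\Omega^c$ and the continuity granted by Proposition \ref{holder} gives a global Lipschitz estimate. Since $w_\Omega$ is $C^\infty$ inside $\Omega$, the issue is to control $|\nabla w_\Omega(x)|$ as $x\to\partial\Omega$. I would handle this by proving the boundary growth estimate
\[
w_\Omega(x) \le a\, d_\Omega(x)\qquad \text{in } S:=\Omega\cap\{d_\Omega<a\},\qquad a:=\sqrt{2M},\ M:=\|w_\Omega\|_{L^\infty},
\]
and then feeding this into the interior gradient bound of Lemma \ref{gradest}.

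The barrier is $\phi(x):=g(d_\Omega(x))$ with $g(t):=a t-\tfrac12 t^{2}$; note $g$ is smooth and strictly increasing on $[0,a]$, with $g'\ge 0$ and $g''\equiv -1$, and $g(a)=a^{2}/2=M$. The key verification is that $\phi$ is a viscosity supersolution of $-\Delta u=1$ in $S$. If $\psi\in C^\infty$ touches $\phi$ from below at $x_0\in S$, then, because $g^{-1}$ is smooth and increasing, $\widetilde\psi:=g^{-1}\circ\psi$ is smooth near $x_0$ and touches $d_\Omega$ from below at $x_0$. Along the segment from $x_0$ to its nearest point $y_0\in\partial\Omega$ the function $d_\Omega$ decreases with slope exactly $-1$; the touching-from-below inequality therefore forces $|\nabla\widetilde\psi(x_0)|=1$. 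Lemma \ref{dist} gives $\Delta\widetilde\psi(x_0)\le 0$, and the chain rule
\[
\Delta\psi = g'(\widetilde\psi)\,\Delta\widetilde\psi + g''(\widetilde\psi)\,|\nabla\widetilde\psi|^{2}
\]
combined with $g'(\widetilde\psi(x_0))\ge 0$ and $g''\equiv -1$ yields $\Delta\psi(x_0)\le -1$, i.e.\ $-\Delta\psi(x_0)\ge 1$. On $\partial S$ we have $\phi=0=w_\Omega$ on $\partial\Omega$ and $\phi=M\ge w_\Omega$ on $\{d_\Omega=a\}$, so the standard viscosity comparison principle for $-\Delta u=1$ gives $w_\Omega\le\phi\le a\,d_\Omega$ throughout $S$.

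To conclude, fix $x_0\in\Omega$ and set $r:=d_\Omega(x_0)$. If $r\le a$, every point of $B_{r/2}(x_0)$ has distance to $\Omega^{c}$ at most $3r/2\le 2a$, so the boundary estimate extended by the trivial bound $w_\Omega\le M$ on $\{d_\Omega\ge a\}$ gives $\|w_\Omega\|_{L^\infty(B_{r/2}(x_0))}\le C(M)\,r$; Lemma \ref{gradest} on $B_{r/2}(x_0)$ then produces a gradient bound independent of $r$. If $r>a$, Lemma \ref{gradest} on $B_{a/2}(x_0)$ combined with $\|w_\Omega\|_\infty\le M$ again gives a uniform bound. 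Both bounds depend only on $M$ and $d$, proving Lipschitz continuity on $\Omega$; the extension by zero remains Lipschitz because $w_\Omega(x)\le a\,d_\Omega(x)$ globally. The main difficulty is the viscosity verification of the supersolution property of $\phi$: since the supersolution property of $d_\Omega$ from Lemma \ref{dist} is only qualitative ($\Delta d_\Omega\le 0$ without a quantitative negative upper bound), the quadratic correction $-\tfrac12 d_\Omega^{2}$ is essential, as it contributes the term $g''|\nabla\widetilde\psi|^{2}=-1$ which produces the right-hand side of the torsion equation.
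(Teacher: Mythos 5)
Your argument is correct and is essentially the paper's own proof: the paper uses the barrier $h(d_\Omega)$ with $h(t)=ct-t^2$, $c>2\|w_\Omega\|_\infty^{1/2}$, shows via Lemma \ref{dist} and the identity $|\nabla\varphi(x_0)|=1$ at a touching point that a crossing of $w_\Omega$ over the barrier would force $\Delta w_\Omega(x_0)\le -2$, contradicting $-\Delta w_\Omega=1$, and then concludes with the interior gradient estimate of Lemma \ref{gradest} exactly as you do. Your phrasing of the comparison step in the language of viscosity supersolutions (rather than evaluating the contradiction directly at the minimum of $h(d_\Omega)-w_\Omega$) and your choice $g(t)=at-\tfrac12 t^2$ are only cosmetic differences.
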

\begin{proof}
For sake of simplicity, we set $w=w_\Omega$ and $\|\cdot\|_\infty=\|\cdot\|_{L^\infty(\Omega)}$. Let $c>2\|w\|_\infty^{1/2}$ and consider the function 
\begin{equation}\label{h}
h(t)=ct-t^2.
\end{equation}
 We claim that
 \begin{equation}\label{stima}
 w(x)\le h(d_\Omega(x)) \qquad \forall \, x\in \overline \Omega.
\end{equation}
Suppose this is not the case. Since both functions vanish on \(\partial \Omega\), there exists \(x_0 \in \Omega\) such that
\[
h(d_\Omega(x_0))-w(x_0)=\min_{\overline \Omega}\big\{h(d_\Omega)-w \big\},
\]
that is the function \(\varphi:=h^{-1}(w)\) touches \(d_\Omega\) from below. By our choice of \(c\) the function \(h\) is invertible on the range  of \(w\). Moreover, since \(w_\Omega(x_0)>0\), the inverse function is also smooth in a neighborhood of \(x_0\). By Lemma \ref{dist}, 
\[
\Delta \varphi(x_0)\le 0.
\]
Hence, the chain rule and the definition of \(h\), \eqref{h}, imply
\[
\Delta w(x_0)=\Delta \big(h\circ \varphi\big)(x_0)=h''(\varphi(x_0))|\nabla \varphi(x_0)|^2+h'(\varphi(x_0))\Delta \varphi(x_0)\le- 2|\nabla \varphi(x_0)|^2=-2,
\]
where we have also taken into account that, since \(\varphi\) touches \(d_\Omega\) from below at \(x_0\), equation \eqref{gradphi} implies the \(|\nabla \varphi(x_0)|=1\). Since \(-\Delta w =1\) the above equation cannot hold, hence \eqref{stima} holds true. Now equations \eqref{stima} and \eqref{h}, imply
\[
w(x)\le h(d_\Omega(x))\le c d_\Omega(x)\qquad \forall\, x\in \overline \Omega.
\]
Arguing as in the proof of Proposition \ref{holder}, we conclude that \(w\) is Lipschitz.
\end{proof}

In order to prove that any solution of \eqref{lbko} is a quasi-minimizer of the perimeter we need to use both the facts that it is a perimeter supersolution (see Lemma \ref{super}) and energy subsolution (see Proposition \ref{lag}). In the next lemma, we will obtain some local information on $\partial \Omega$ using an inner variation of $\Omega$. This lemma in combination with Lipschitz continuity of $\Omega$ will give the regularity of the boundary of $\Omega$ (see Theorem \ref{enper} below). The argument below is classical and similar results can be found in \cite{altcaf}, \cite{bulbk} and \cite{bubuve}. 
 
\begin{lemma}\label{sopra}
Let $\Omega\subset\R^d$ be an energy subsolution and let $w=w_\Omega$. Then for each $B_r(x_0)\subset\R^d$ and each $\widetilde\Omega\subset\Omega$ such that $\Omega\setminus\widetilde\Omega\subset B_r(x_0)$, we have the following inequality:
\begin{multline}\label{sopra1}
\frac{1}{2}\int_{B_r(x_0)}|\nabla w|^2\,dx+k(P(\Omega)-P(\widetilde\Omega))\\
\le \int_{B_r(x_0)}w\,dx+ C_d\left(r+\frac{\|w\|_{L^\infty(B_{2r}(x_0))}}{r}\right)\int_{\partial B_r(x_0)}w\,d\H^{d-1},
\end{multline}
\end{lemma}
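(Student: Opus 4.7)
The plan is to apply the energy subsolution property to $\widetilde\Omega$ and to compare the two energies through a cutoff test function supported outside $B_r(x_0)$.

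First I would fix a smooth radial cutoff $\phi\colon\R^d\to[0,1]$ with $\phi\equiv 0$ on $B_r(x_0)$, $\phi\equiv 1$ on $\R^d\setminus B_{2r}(x_0)$, and $|\nabla\phi|^2+|\Delta\phi|\le C_d/r^2$, and set $v:=\phi w$. Then $v\in\widetilde H^1_0(\widetilde\Omega)$, since $v=0$ on $B_r(x_0)\supset\Omega\setminus\widetilde\Omega$ and on $\Omega^c$. The smallness $\|w-w_{\widetilde\Omega}\|_{L^2}\le\eps$ required to apply Definition \ref{sub} follows, for $r$ sufficiently small, from the fact that $w_{\widetilde\Omega}$ is the $H^1_0$-orthogonal projection of $w$ onto $\widetilde H^1_0(\widetilde\Omega)$, so that $\|w-w_{\widetilde\Omega}\|_{L^2}\le C(\Omega)\|\nabla(w-v)\|_{L^2}$, together with Remark \ref{propenfun} to bound $\|w\|_{L^\infty}$. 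Thus \eqref{sub1} gives
\begin{equation*}
k\bigl(P(\Omega)-P(\widetilde\Omega)\bigr)\le \widetilde E(\widetilde\Omega)-\widetilde E(\Omega)\le J(v)-J(w).
\end{equation*}

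Next I would expand $J(v)-J(w)$ by splitting $\R^d$ into $B_r(x_0)$, the annulus $A:=B_{2r}(x_0)\setminus B_r(x_0)$, and $\R^d\setminus B_{2r}(x_0)$. On $B_r$, $v=0$, so the contribution is $-\tfrac12\int_{B_r}|\nabla w|^2+\int_{B_r}w$; on $A$, use $|\nabla v|^2=\phi^2|\nabla w|^2+2\phi w\,\nabla\phi\cdot\nabla w+w^2|\nabla\phi|^2$; outside $B_{2r}$ the contribution vanishes. Dropping the non-positive term $\tfrac12\int_A(\phi^2-1)|\nabla w|^2$ and moving $\tfrac12\int_{B_r}|\nabla w|^2$ to the left, one arrives at
\begin{equation*}
\tfrac12\int_{B_r}|\nabla w|^2+k\bigl(P(\Omega)-P(\widetilde\Omega)\bigr)\le \int_{B_r}w+\int_A(1-\phi)w+\int_A\phi w\,\nabla\phi\cdot\nabla w+\tfrac12\int_A w^2|\nabla\phi|^2.
\end{equation*}

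The main obstacle is the cross term $\int_A\phi w\,\nabla\phi\cdot\nabla w$, which is linear in $\nabla w$ and carries no sign. I would handle it via integration by parts: since $\phi^2\in C^2_c(\R^d)$ and $w\in H^1\cap L^\infty(\R^d)$ (the $L^\infty$ bound coming from Remark \ref{propenfun}), one has
\begin{equation*}
\int_{\R^d}2\phi w\,\nabla\phi\cdot\nabla w\,dx=\tfrac12\int_{\R^d}\nabla(\phi^2)\cdot\nabla(w^2)\,dx=-\tfrac12\int_{\R^d}w^2\Delta(\phi^2)\,dx=-\int_A w^2\bigl(\phi\Delta\phi+|\nabla\phi|^2\bigr)\,dx,
\end{equation*}
and the bound $|\phi\Delta\phi+|\nabla\phi|^2|\le C_d/r^2$ gives $\bigl|\int_A\phi w\,\nabla\phi\cdot\nabla w\bigr|\le\tfrac{C_d}{r^2}\int_A w^2$.

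Finally, to convert annular integrals into the surface integral on $\partial B_r(x_0)$, I would use the monotonicity of the spherical mean. Setting $I(\rho):=\int_{\partial B_\rho(x_0)}w\,d\H^{d-1}$, the divergence theorem applied to $-\Delta w=1$ on $\Omega$ yields $\tfrac{d}{d\rho}\bigl(I(\rho)/\rho^{d-1}\bigr)=-|\Omega\cap B_\rho(x_0)|/\rho^{d-1}\le 0$, so $I(\rho)/\rho^{d-1}$ is nonincreasing and
\begin{equation*}
\int_A w\,dx=\int_r^{2r}I(\rho)\,d\rho\le C_d\,r\int_{\partial B_r(x_0)}w\,d\H^{d-1}.
\end{equation*}
Combining this with $\int_A w^2\le\|w\|_{L^\infty(B_{2r}(x_0))}\int_A w$ to bound the last three terms of the right-hand side by $C_d\bigl(r+\|w\|_{L^\infty(B_{2r}(x_0))}/r\bigr)\int_{\partial B_r(x_0)}w\,d\H^{d-1}$ produces \eqref{sopra1}.
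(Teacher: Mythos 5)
Your set-up — the cutoff competitor $v=\phi w$, the verification that $v\in\widetilde H^1_0(\widetilde\Omega)$, the application of \eqref{sub1}, the expansion of $J(v)-J(w)$ and the integration by parts for the cross term — is sound (and, like the paper's own proof, only yields the inequality for $r$ small). The gap is in the last step, which is precisely where the stated form of the right-hand side must be produced. The identity $\tfrac{d}{d\rho}\bigl(I(\rho)/\rho^{d-1}\bigr)=-|\Omega\cap B_\rho(x_0)|/\rho^{d-1}$ would require $\Delta w=-\1_\Omega$ as a distribution on all of $\R^d$. This is false: $w$ vanishes on $\Omega^c$, and $\Delta w+\1_\Omega$ is a nonnegative singular measure carried by $\partial\{w>0\}$ (for $\Omega=B_R$ one computes $\Delta w=-\1_{B_R}+\tfrac{R}{d}\H^{d-1}\llcorner\partial B_R$). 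Hence $w$ is not superharmonic across the free boundary, and $I(\rho)/\rho^{d-1}$ can increase; what $\Delta w+1\ge0$ actually gives is monotonicity in the opposite (useless) direction. Correspondingly, the inequality $\int_A w\,dx\le C_d\,r\int_{\partial B_r(x_0)}w\,d\H^{d-1}$ is genuinely false: if $\partial B_r(x_0)$ misses $\{w>0\}$ up to an $\H^{d-1}$-null set while the annulus $A$ meets it in positive measure, the right-hand side vanishes and the left-hand side does not. Your argument therefore proves only a weaker inequality with an extra term of order $\|w\|_{L^\infty(B_{2r}(x_0))}\,r^d+\|w\|^2_{L^\infty(B_{2r}(x_0))}\,r^{d-2}$ on the right (which, incidentally, would still suffice for the use of the lemma in Step 1 of Theorem \ref{enper}), not \eqref{sopra1} as stated.

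The paper avoids the annulus volume terms altogether by choosing a different competitor: $w_r=w\wedge v$ on $B_{2r}(x_0)$, where $v$ solves $-\Delta v=1$ in the annulus with $v=0$ on $\partial B_r(x_0)$ and $v=\|w\|_{L^\infty(B_{2r}(x_0))}$ on $\partial B_{2r}(x_0)$. Because $v$ satisfies the same equation as $w$, after the convexity inequality $\tfrac{|a|^2}{2}-\tfrac{|b|^2}{2}\le a\cdot(a-b)$ and one integration by parts, the volume integrals over the annulus cancel exactly against $\int(w-w_r)$, leaving only $\int_{B_r(x_0)}w$ and the flux term $-\int_{\partial B_r(x_0)}w\,\tfrac{\partial v}{\partial n}\,d\H^{d-1}$; the explicit scaling $|\nabla v|\le C_d\bigl(r+\alpha/r\bigr)$ with $\alpha=\|w\|_{L^\infty(B_{2r}(x_0))}$ then gives exactly the constant in \eqref{sopra1}. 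To repair your proof you must either adopt this barrier construction or settle for the weaker statement with the annulus term.
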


\begin{proof}
Without loss of generality, we can suppose that $x_0=0$. We will denote with $B_r$ the ball of radius $r$ centered in $0$ and with $A_r$ the annulus $B_{2r}\setminus \overline{B_{r}}$.\\
Let $\psi:A_1\rightarrow\R^{+}$ be the solution of the equation:
$$\Delta \psi=0\ \hbox{on}\ A_1,\qquad\psi=0\ \hbox{on}\ \partial B_{1},\qquad\psi=1\ \hbox{on}\ \partial B_{2}.$$
We can also give the explicit form of $\psi$, but for our purposes, it is enough to know that $\psi$ is bounded and positive.\\
   With $\phi:A_1\rightarrow\R^{+}$ we denote the solution of the equation:
$$-\Delta \phi=1\ \hbox{on}\ A_1,\qquad\phi=0\ \hbox{on}\ \partial B_{1},\qquad\phi=0\ \hbox{on}\ \partial B_{2}.
$$
For an arbitrary $r>0$, $\alpha>0$ and $k>0$, we have that the solution of the equation

\begin{equation}\label{sopra4}
-\Delta v=1\ \hbox{on}\ A_r,\qquad v=0\ \hbox{on}\ \partial B_{1},\qquad v=\alpha\ \hbox{on}\ \partial B_{2},
\end{equation} 
is given by 
\begin{equation}\label{sopra5}
v(x)=r^2\phi(x/r)+\alpha\psi(x/r),
\end{equation}
and its gradient by 
\begin{equation}\label{sopra6}
\nabla v(x)=r\nabla\phi(x/r)+\frac{\alpha}{r}\nabla\psi(x/r).
\end{equation}

Consider the solution $v$ of the equation \ref{sopra4} with $\alpha=\|w\|_{L^{\infty}(B_{2r})}$. We work with the perturbation $w_r=w \1_{B_{2r}^c}+w\wedge v \1_{B_{2r}}$. Observe that, by the choice of $\alpha$,  $w_r\in H^1_0(\widetilde\Omega)$. Moreover, a simple computation gives
\[
|E(\widetilde \Omega)-E(\Omega)|\to 0 \quad\text{ uniformly as } r\to 0.
\]  
Being  $\Omega$  an energy subsolution, we get, for \(r\) small enough,
$$\frac{1}{2}\int_{\Omega}|\nabla w|^2\,dx-\int_\Omega w(x)\,dx+kP(\Omega)\le \frac{1}{2}\int_{\Omega}|\nabla w_r|^2\,dx-\int_{\Omega} w_r(x)\,dx+kP(\widetilde\Omega).$$
Since $w_r=0$ in $B_{r}$ and $w_r=w$ in $(B_{2r})^c$, we obtain with the same computations of Lemma \ref{bounded},
\begin{equation}
\begin{split}\frac{1}{2}\int_{B_{r}}|\nabla w|^2\,dx+k(P(\Omega)-P(\widetilde\Omega))&\le \frac{1}{2}\int_{A_r}|\nabla w_r|^2-|\nabla w|^2\,dx+\int_{B_{2r}}(w-w_r)\,dx\\
&\le -\int_{A_r}\nabla v\cdot\nabla ((w-v)\vee 0)\,dx+\int_{B_{2r}}(w-w_r)\,dx\\
&=\int_{\partial B_{r}}w\,\frac{\partial v}{\partial n}\,d\H^{d-1}-\int_{A_r}((w-v)\vee 0)\,dx+\int_{B_{2r}}(w-w_r)\,dx\\
&=\int_{\partial B_{r}}w\,\frac{\partial v}{\partial n}\,d\H^{d-1}+\int_{B_{r}}w\,dx\\
&\le \left(r\|\nabla\phi\|_{\infty}+\frac{\alpha}{r}\|\nabla\psi\|_{\infty}\right)\int_{\partial B_{r}}w\,d\H^{d-1}+\int_{B_{r}}w\,dx,
\end{split}\end{equation}
where the last inequality is due to \eqref{sopra6}. Recalling that $\alpha=\|w\|_{L^\infty(B_{2r})}$, we obtain \eqref{sopra1}.
\end{proof}

\begin{teo}\label{enper}
Let $\Omega\subset\R^d$ be a set of finite Lebesgue measure and finite perimeter. If $\Omega$ is an energy subsolution and a perimeter supersolution, then $\Omega$ is a bounded open set and its boundary is $C^{1,\alpha}$ for every $\alpha\in(0,1)$ outside a closed set of dimension \(d-8\).
\end{teo}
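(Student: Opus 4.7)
The plan is to prove that $\Omega$ is an almost-minimizer of the perimeter in the sense of Tamanini and then invoke the classical regularity theory for such sets. Boundedness of $\Omega$ is an immediate consequence of Lemma \ref{bounded}. For openness, the perimeter supersolution property provides the exterior density estimate of Lemma \ref{ext}, which via Proposition \ref{soblike} allows us to identify $\Omega$ (up to a Lebesgue negligible set) with the open set $\Omega_1=\{w_\Omega>0\}$; moreover, Proposition \ref{lip} shows that $w_\Omega$ is globally Lipschitz on $\R^d$, a fact that will be central in what follows.

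The core of the proof is to establish the quasi-minimality inequality
\[
P(\Omega,B_r(x_0))\le P(F,B_r(x_0))+Cr^d
\]
for every competitor $F$ with $F\Delta\Omega\subset\subset B_r(x_0)$ and every ball $B_r(x_0)\subset\R^d$ containing a boundary point of $\Omega$ (the case $B_r(x_0)\subset\Omega$ being trivial, since then $P(\Omega,B_r(x_0))=0$). I would split the comparison into its outer and inner parts, setting $F^+:=F\cup\Omega$ and $F^-:=F\cap\Omega$. The outer half comes for free from the perimeter supersolution property: $P(\Omega)\le P(F^+)$. For the inner half, I would apply Lemma \ref{sopra} to $F^-$; after dropping the nonnegative Dirichlet term on the left-hand side and exploiting the bound $\|w_\Omega\|_{L^\infty(B_{2r}(x_0))}\le Cr$ coming from the Lipschitz continuity of $w_\Omega$ together with $w_\Omega=0$ at some point of $\partial\Omega\cap B_r(x_0)$, both right-hand side integrals are of order $r^d$, yielding $P(\Omega)\le P(F^-)+Cr^d$. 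Summing the two one-sided bounds and invoking the standard submodularity inequality $P(F^-)+P(F^+)\le P(\Omega)+P(F)$, the desired quasi-minimality follows, and the inclusion $F\Delta\Omega\subset\subset B_r(x_0)$ localizes the estimate inside $B_r(x_0)$.

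The regularity conclusion then follows from classical almost-minimality theory: Tamanini's $\varepsilon$-regularity theorem and Federer's dimension reduction argument give that the reduced boundary $\partial^*\Omega$ is of class $C^{1,1/2}$ and that the singular set $\partial\Omega\setminus\partial^*\Omega$ has Hausdorff dimension at most $d-8$. To promote the regularity exponent to every $\alpha\in(0,1)$, one performs a bootstrap based on the PDE $-\Delta w_\Omega=1$ in $\Omega$ with $w_\Omega=0$ on $\partial\Omega$, combined with the Euler--Lagrange free boundary condition arising from the joint energy-subsolution/perimeter-supersolution optimality and Schauder-type estimates on the now-regular part of $\partial\Omega$.

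The main obstacle is the inner-variation step, namely extracting an $r^d$ upper bound from the right-hand side of Lemma \ref{sopra} rather than the naive $r^{d-1}$ scaling one would a priori expect: this crucially requires the Lipschitz control of $w_\Omega$ near the boundary, which in turn depends on using the perimeter supersolution property as a barrier in the proof of Proposition \ref{lip}, and on carefully distinguishing the case where the ball meets $\partial\Omega$ from the trivial interior case. The remaining pieces---the submodularity of perimeter, Tamanini's regularity theorem, and the dimension reduction for the singular set---are by now standard machinery.
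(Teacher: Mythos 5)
Your overall architecture coincides with the paper's: boundedness from Lemma \ref{bounded}, openness from Proposition \ref{soblike}, Lipschitz continuity of $w_\Omega$ from Proposition \ref{lip}, the inner bound $P(\Omega)\le P(\widetilde\Omega)+Cr^d$ from Lemma \ref{sopra} combined with $\|w_\Omega\|_{L^\infty(B_{2r}(x_0))}\le Cr$, the outer bound from the perimeter supersolution property, the recombination of an arbitrary competitor into its inner and outer parts, and Tamanini's almost-minimality theory for the $C^{1,\alpha}$ ($\alpha<1/2$) regularity and the $(d-8)$-dimensional singular set. Up to that point your argument is correct and is exactly the paper's Step 1.

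The one place where your proposal would fail as written is the promotion from $\alpha\in(0,1/2)$ to $\alpha\in(0,1)$. You invoke a ``bootstrap based on the PDE \dots combined with the Euler--Lagrange free boundary condition \dots and Schauder-type estimates,'' but in general there is no Euler--Lagrange \emph{equation} available: the energy subsolution property only permits inner variations and the perimeter supersolution property only permits outer variations, so at a regular point, where $\partial\Omega$ is the graph of a $C^1$ function $\phi$, the first variations of $E$ and $P$ yield only the two one-sided distributional inequalities
\[
0\le \mathrm{div}\left(\frac{\nabla\phi}{\sqrt{1+|\nabla\phi|^2}}\right)\le \frac{1}{k}\left|\frac{\partial w_\Omega}{\partial\nu}\right|^2\le C,
\]
and nothing more (an actual equation, and hence a genuine bootstrap toward higher regularity, requires extra information such as the simplicity of the eigenvalue, cf.\ Remark \ref{analytic}); moreover Schauder theory needs a H\"older-continuous right-hand side, which is not available at this stage. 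What does work, and is what the paper does in its Step 2, is to observe that a $C^1$ graph whose distributional mean curvature is squeezed between $0$ and a constant belongs to $W^{2,p}$ for every $p<\infty$ by $L^p$ elliptic regularity, hence to $C^{1,\alpha}$ for every $\alpha\in(0,1)$. This two-sided $L^\infty$ bound on the mean curvature is the endpoint of the argument, not the first step of a bootstrap, and you should replace your final paragraph accordingly.
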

\begin{proof}First notice that, by Lemma \ref{bounded}, $\Omega$ is bounded. Moreover, since $\Omega$ is a perimeter supersolution, we can apply Proposition \ref{soblike} and Proposition \ref{lip}, obtaining that $\Omega$ is an open set and the energy function $w:=w_\Omega$ is Lipschitz.

We now divide the proof in two steps.

\medskip
\noindent
\(\bullet\) \textit{Step 1}.
 Let \(x_0\in \partial \Omega\) and let $B_r(x_0)$ be a ball of radius less than $1$. By Lemma \ref{sopra}, for each $\widetilde\Omega\subset \Omega $, such that $\widetilde\Omega\Delta\Omega\subset B_r(x_0)$, equation \eqref{sopra1} implies 
\begin{equation}\label{allafine}
\begin{split}
k(P(\Omega)-P(\widetilde\Omega))&\le \int_{B_r(x_0)}w\,dx+ C_d\left(r+\frac{\|w\|_{L^\infty(B_{2r}(x_0))}}{r}\right)\int_{\partial B_r(x_0)}w\,d\H^{d-1}\\
&\le C_d\Big(\|w\|_{L^\infty(B_{2r}(x_0))} r^{d-1}+\|w\|^2_{L^\infty(B_{2r}(x_0))} r^{d-2}\Big),
\end{split}
\end{equation}
where $C_d$ is a dimensional constant.
Now since \(w\) is Lipschitz and vanishes on \(\partial \Omega\), we have \(\|w\|_{L^\infty(B_{2r}(x_0))}\le C r\), hence equation \eqref{allafine}, implies 
\begin{equation}\label{almost}
P(\Omega,B_r(x_0))\le P(\widetilde\Omega,B_r(x_0))+Cr^{d},
\end{equation}
where $C$ depends on the dimension, the constant $k$ on \eqref{sub1} and the Lipschitz constant of \(w\) (which, in turn, depends only on the data of the problem). Moreover, by the perimeter subminimality, equation \eqref{almost} clearly holds true also for outer variations. Splitting every variation in an outer and inner variations (see for instance \cite[Proposition 1.2]{DPP}), we obtain
\begin{equation*}
P(\Omega,B_r)\le P(\widetilde\Omega,B_r)+Cr^{d},\qquad \forall\  \widetilde \Omega\Delta \Omega \subset B_r(x).
\end{equation*}
Hence \(\Omega\) is a \emph{almost-minimizer} for the perimeter in the sense of \cite{tam1,tam2} (see also \cite{alm}). From this it follows that \(\partial \Omega\) is a \(C^{1,\alpha}\) manifold, outside a closed \emph{singular set} \(\Sigma\) of dimension \((d-8)\), for every \(\alpha\in (0,1/2)\).

\medskip
\noindent
\(\bullet\) \textit{Step 2.} We want to improve the exponent of H\"older continuity of the normal of \(\partial \Omega\) in the regular (i.e. non-singular) points of the boundary. For this notice that, for every regular point \(x_0\in \partial \Omega\), there exists a radius \(r\) such that \(\partial\Omega\) can be represented by the graph of a \(C^1\) function \(\phi\) in \(B_r(x_0)\), that is, up to a rotation of coordinates
\[
\Omega \cap B_r(x_0)=\{x_d>\phi(x_1,\dots,x_{d-1})\}\cap B_r(x_0).
\]
For every  \(T\in C_c^1(B_r(x_0);\R^d)\) such that \(T\cdot \nu_\Omega<0\) and \(t\) is sufficiently small, we consider the local variation
\[
\Omega_t=\big({\rm Id}+tT\big)(\Omega) \subset \Omega.
\]
By the energy subminimality we obtain
\begin{equation}\label{comp}
k(P(\Omega)-P(\Omega_t))\le E(\Omega_t)-E(\Omega).
\end{equation}
Since \(T\) is supported in \(B_r\) and \(\partial \Omega\cap B_r\) is \(C^1\), we can perform the same computations as in \cite[Chapter 5]{hepi05}, to obtain that
\begin{equation}\label{derE}
E(\Omega_t)-E(\Omega)=-t\int_{\partial \Omega\cap B_r}\left |\frac{\partial w_\Omega}{\partial \nu}\right|^2\, T\cdot \nu_\Omega\, d\H^{d-1}+o(t).
\end{equation}
Moreover, see for instance \cite[Theorem 17.5]{maggi},
\begin{equation}\label{derP}
P(\Omega_t)=P(\Omega)+t\int_{\partial \Omega\cap B_r} {\rm div}_{\partial \Omega} T\, d\mathcal H^{d-1}+o(t)
\end{equation}
where \( {\rm div}_{\partial \Omega} T\) is the \emph{tangential divergence} of \(T\). Plugging \eqref{derE} and \eqref{derP} in \eqref{comp}, a standard computation (see \cite[Theorem 11.8]{maggi}), gives (in the distributional sense)
\[
{\rm div }\left(\frac {\nabla \phi}{\sqrt{1+|\nabla \phi|^2}}\right)\le \frac 1 k \left |\frac{\partial w_\Omega}{\partial \nu}\right|^2 \le C,
\]
where the last inequality is due to the Lipschitz continuity of \(w_\Omega\). Moreover applying \eqref{derP} to outer variations of \(\Omega\) (i.e. to variations such that \(T\cdot \nu_\Omega>0\)) we get
\[
{\rm div }\left(\frac {\nabla \phi}{\sqrt{1+|\nabla \phi|^2}}\right)\ge 0.
\]
In conclusion \(\phi\) is a \(C^1\) function satisfying
\[
{\rm div }\left(\frac {\nabla \phi}{\sqrt{1+|\nabla \phi|^2}}\right)\in L^\infty,
\]
and classical elliptic regularity gives \(\phi\in C^{1,\alpha}\), for every \(\alpha \in (0,1)\).
\end{proof}
 
\begin{oss}\label{analytic}In some particular cases the conclusion of the above Theorem can be strengthened. For instance, if one knows that, for a solution of \eqref{lbko}, the \(k\)th eigenvalue is simple, then one can write the Euler Lagrange equation for the minimum domain which reads, in a neighbourhood of a regular point, as
\[
{\rm div }\left(\frac {\nabla \phi}{\sqrt{1+|\nabla \phi|^2}}\right)=\left |\frac{\partial w_k}{\partial \nu}\right|^2,
\]
where \(w_k\) is the \(k\)th eigenfunction. 
In this case a standard bootstrap argument gives that the regular part of \(\partial \Omega\) is an analytic manifold. 
\end{oss} 
 
 \begin{figure}[htbp]
\begin{center}
\includegraphics[scale=1]{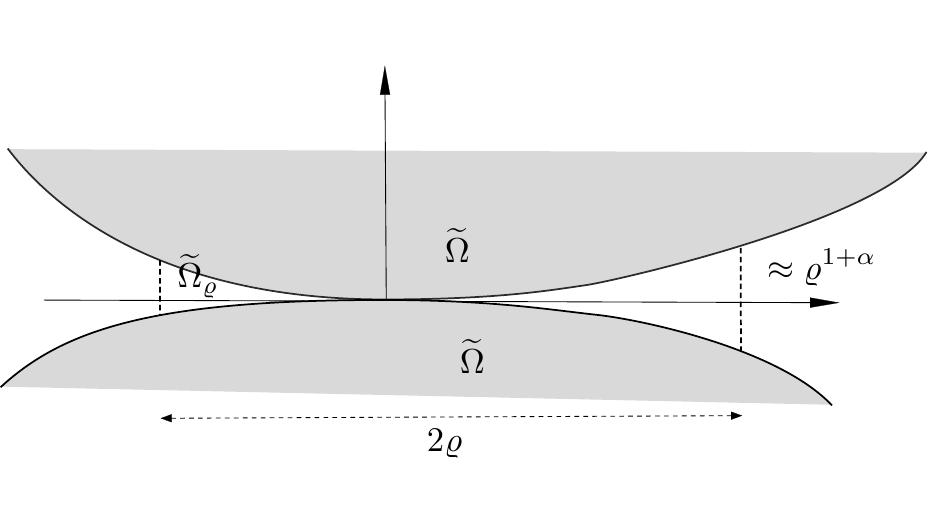}
\caption{The variation in the proof of Proposition \ref{conn}.}
\label{fig2}
\end{center}
\end{figure}

\begin{prop}\label{conn}
Let $f:\R^p\to\R$ be a function satisfying \((f1)\), \((f2)\) and \((f3)\) and let  $\Omega\subset\R^d$ be a bounded open set, solution of \eqref{lbfperint}. Then $\Omega$ is connected.
\end{prop}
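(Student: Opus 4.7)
The plan is an argument by contradiction. Suppose $\Omega$ has two non-trivial connected components $A$ and $B$, both open and bounded with $P(A)+P(B)=1$ and $\dist(A,B)>0$. I would introduce the sliding family $\Omega_t:=A\cup(B+tv)$ for a unit vector $v$ chosen so that $B+tv$ eventually meets $A$, and let $t^*:=\inf\{t>0:\overline{A}\cap \overline{B+tv}\neq\emptyset\}\in(0,\infty)$. For $t\in[0,t^*)$ the two pieces remain at positive distance, so $P(\Omega_t)=P(\Omega)=1$ and the Dirichlet spectrum of $\Omega_t$ is the union of the spectra of $A$ and $B+tv$, coinciding with that of $\Omega$; hence each such $\Omega_t$ is optimal, and in the limit $\Omega_{t^*}$ is still open with the same eigenvalues (since $A$ and $B+t^*v$ remain disjoint as open sets) and $P(\Omega_{t^*})\le 1$.

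If the closures meet on a set of positive $\H^{d-1}$-measure then already $P(\Omega_{t^*})<1$. Rescaling by $\sigma=P(\Omega_{t^*})^{-1/(d-1)}>1$ yields a perimeter-one competitor $\sigma\Omega_{t^*}$ with strictly smaller eigenvalues, which by assumption $(f3)$ contradicts optimality. Thus I may assume the closures meet only at isolated points; fix such a point $p$. Then $P(\Omega_{t^*})=P(\Omega)=1$, $\Omega_{t^*}$ is itself optimal, and by Theorem \ref{thfpero} combined with Lemma \ref{super} it is a perimeter supersolution.

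Theorem \ref{enper} guarantees that the singular sets of $\partial A$ and $\partial(B+t^*v)$ have Hausdorff dimension at most $d-8$, so a generic choice of $v$ ensures that $p$ is a $C^{1,\alpha}$ regular point of both boundaries and an isolated contact. Since $p$ is the first contact point, the two $C^{1,\alpha}$ graphs $\partial A$ and $\partial(B+t^*v)$ share a common tangent hyperplane at $p$, and $A$, $B+t^*v$ lie on opposite sides; consequently $\Omega_{t^*}^c \cap B_\eps(p)$ is a thin gap whose thickness at distance $r$ from $p$ is $O(r^{1+\alpha})$.

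The final step is to test the perimeter supersolution property against $\widetilde\Omega:=\Omega_{t^*}\cup B_\eps(p)\supset \Omega_{t^*}$. A direct computation gives
\[
P(\widetilde\Omega)-P(\Omega_{t^*})=\H^{d-1}\bigl(\partial B_\eps(p)\cap \Omega_{t^*}^c\bigr)-\H^{d-1}\bigl(\partial\Omega_{t^*}\cap B_\eps(p)\bigr).
\]
The $C^{1,\alpha}$ tangency makes the first term $O(\eps^{d-1+\alpha})=o(\eps^{d-1})$ (measuring how much of the $\eps$-sphere falls inside the gap), while the second is at least of order $\eps^{d-1}$, coming from the two nearly flat $(d-1)$-dimensional discs $\partial A\cap B_\eps(p)$ and $\partial(B+t^*v)\cap B_\eps(p)$ through $p$. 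For $\eps$ small the right-hand side is strictly negative, contradicting $P(\Omega_{t^*})\le P(\widetilde\Omega)$. The main obstacle is guaranteeing that $v$ can be chosen so that $p$ is both regular and an isolated tangent contact; this is handled by combining the $C^{1,\alpha}$ regularity with the $(d-8)$-dimensional bound on the singular set furnished by Theorem \ref{enper}.
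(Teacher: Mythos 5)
Your final step --- adding a small ball (the paper uses a small cylinder) at a point where two components touch tangentially at a common $C^{1,\alpha}$ regular boundary point, and comparing $O(\eps^{d-1+\alpha})$ against $c\,\eps^{d-1}$ to contradict the perimeter supersolution property --- is exactly the paper's mechanism, and that part is fine. The genuine gap is earlier: you assert that ``a generic choice of $v$ ensures that $p$ is a $C^{1,\alpha}$ regular point of both boundaries,'' and nothing in your argument supports this. The first-contact point of the sliding family is the point realizing $t^*(v)=\inf\{t>0:\ tv\in\overline A-\overline B\}$, and the set of directions $v$ for which this is realized by a pair involving a singular point is the radial projection onto $S^{d-1}$ of (a subset of) $(\Sigma_A\times\partial B)\cup(\partial A\times\Sigma_B)$, a set of dimension up to $(d-8)+(d-1)=2d-9$. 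Since $2d-9\ge d-1$ precisely when $d\ge 8$ --- the only case where the singular set can be nonempty --- a dimension count cannot rule out that \emph{every} direction is bad, so genericity in $v$ does not close the gap. (A secondary, smaller issue: your dichotomy ``contact set of positive $\H^{d-1}$-measure'' versus ``isolated contact points'' is not exhaustive.)

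The paper circumvents this with a different and essentially unavoidable trick: instead of sliding until first contact, first translate the components \emph{apart} to positive distance (which preserves optimality), and pick points $x\in\partial A$, $y\in\partial(x_0+B)$ realizing $\dist(A,x_0+B)$. The open ball of radius $|x-y|/2$ centered at $(x+y)/2$ is disjoint from the set, so both $x$ and $y$ satisfy an exterior ball condition; hence any tangent cone at $x$ or $y$ is contained in a half-space and must therefore equal it, which forces both points to be regular. Only then does one translate so that these two specific regular points coincide, producing a tangential contact at a point that is regular for both components, after which your (and the paper's) perimeter comparison applies. To repair your proof you would need to replace the ``generic $v$'' step with this exterior-ball argument or an equivalent device guaranteeing regularity of the contact point.
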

\begin{proof}
We first prove the result in dimension \(d\le 7\), in which case the singular set of the boundary $\partial\Omega$ is empty. We first note that, since \(\Omega\) is a solution of \eqref{lbfperint}, it has a finite number (at most $max\{k_1,\dots,k_p\}$) of connected components. Suppose, by contradiction, that there are at least two connected components of $\Omega$. If we take one of them and translate it until it touches one of the others, then we obtain a set \(\widetilde \Omega\) which is still a solution of \eqref{lbfperint}. Using the regularity of the contact point for the two connected components, it is easy to construct an outer variation of \(\widetilde \Omega\) which decreases the perimeter (see Figure \ref{fig2}). In fact, assuming that the contact point is the origin, up to a rotation of the coordinate axes, we can find a small cylinder \(C_r\) and two \(C^{1,\alpha}\) functions \(g_1\) and \(g_2\) such that
\begin{equation}\label{norm}
g_1(0)=g_2(0)=|\nabla g_1(0)|=|\nabla g_2(0)|=0,
\end{equation}
and 
\[
\widetilde \Omega^c\cap C_r=\big\{ g_1(x_1,\dots,x_{d-1})\le x_d\le g_2(x_1,\dots, x_{d-1})\big\}\cap C_r.
\]
Now, for \(\varrho<r\), consider the set \(\widetilde \Omega_\varrho:=\widetilde\Omega\cup C_{\varrho}\supset\widetilde \Omega\). It is easy see that, thanks to \eqref{norm} and the \(C^{1,\alpha}\) regularity of \(g_1\) and \(g_2\),
\[
P(\widetilde \Omega_{\varrho})-P(\widetilde \Omega)\le C_\alpha \varrho^{d-1+\alpha}-C_d \varrho^{d-1}<0,
\]
for \(\varrho\) small enough, which contradicts the minimality of \(\widetilde \Omega\) \footnote{Another way to conclude is to notice that for \(\widetilde \Omega\) the origin is not a regular point, a contradiction with Theorem \ref{enper}.}.\\

We now consider the case $d\ge8$. In this case the singular set may be non-empty and so, in order to perform the operation described above, we need to be sure that the contact point is not singular. 

Suppose, by contradiction, that the optimal set $\Omega$ is disconnected, i.e. there exist two non-empty open sets $A,B\subset\Omega$ such that $A\cup B=\Omega$ and $A\cap B=\emptyset$. We have
\[
\partial A\cup \partial B \subset \partial \Omega=\partial^M\Omega, \quad(\footnote{We denote with \(\partial^M \Omega\) the essential boundary of \( \Omega\), i.e. the complement to the set of  density \(1\) points of \(\Omega\) and of \(\Omega^c\).})
\]
where the last inequality follows by classical density estimates. By Federer's criterion \cite[Theorem 16.2]{maggi}, \(A\) and \(B\) have finite perimeter.  Arguing as in  \cite[Theorem 2, Section 4]{ACMM}, we deduce that $P(\Omega)=P(A)+P(B)$. 

Since both $A$ and $B$ are bounded, there is some $x_0\in\R^d$ such that $\dist(A,x_0+B)>0$. Then the set $\Omega'=A\cup (x_0+B)$ is also a solution of \eqref{lbfperint}. Let $x\in\partial A$ and $y\in\partial (x_0+B)$ be such that $|x-y|=\dist(A,x_0+B)$. Since the ball with center $(x+y)/2$ and radius $|x-y|/2$ does not intersect $\Omega'$, we have that in both $x$ and $y$, $\Omega'$ satisfies the exterior ball condition. Hence both \(x\) and \(y\) are regular points\footnote{This can be easily seen, since any tangent cone at these points is contained in an half-space and hence it has to coincide with it, see \cite[Theorem 36.5]{sim}}.

Consider now the set $\Omega''=(-x+A)\cup(-y+x_0+B)$. It is a solution of \eqref{lbfperint} and has at least two connected components, which meet in a point which is regular for both of them. Reasoning as in the case $d\le 7$, we obtain a contradiction. 
\end{proof}

\section{Appendix: Proof of Theorem \ref{cc}}
We apply the concentration compactness principle from \cite{concom} to the sequence of characteristics functions $\1_{\Omega_n}$.  First notice that, being all the sets of finite measure, the isoperimetric inequality and the uniform bound on the perimeters ensure that
\[
\sup_{n\in \N}|\Omega_n|\le C.
\]
Let us assume that
\begin{equation}\label{limsup}
\limsup_{n\to \infty} |\Omega_n|>0.
\end{equation}
In this case, up to subsequences, we can assume that
\begin{equation*}
\l:=\lim_{n\to \infty }|\Omega_n|
\end{equation*}
exists and that \(\l\in (0,+\infty)\). Thanks to this  can rescale all our sets in such a way that $|\Omega_n|=1$ still maintaining a uniform bound on the perimeters.

As in \cite{concom} we have that, up to subsequences,  the family of concentration functions $Q_n:\R^+\to\R^+$, defined by 
$$Q_n(r)=\sup_{x\in\R^d}|B_r(x)\cap\Omega_n|,$$
is pointwise converging to some monotone increasing function $Q:\R^+\to\R^+$. We now consider three different cases:
\begin{enumerate}[(i)]
\item $\lim_{r\to\infty}Q(r)=1$. In this case, up to substitute  \(\Omega_n\)  with  \(\Omega_n+x_n\) for suitable \(x_n\in \R^d\),  we have that for every $\eps>0$ there is some $R>0$ such that
 \[
 \sup_{n\in \N}|\Omega_n\setminus B_R|\le\eps.
 \]
  Since the functions $w_{\Omega_n}$ are uniformly bounded in $L^\infty(\R^d)$  we infer  that for every $\eps>0$ there is some $R>0$ such that 
  \[ 
  \sup_{n\in \N} \int_{B_R^c}w_{\Omega_n}\,dx\le\eps.
  \]
By the compact inclusion $BV(\R^d)\hookrightarrow L^1_{\rm loc}(\R^d)$ and $H^1(\R^d)\hookrightarrow L^2_{\rm loc}(\R^d)$, we see that (up to  subsequences)  there  are a set $\Omega\subset\R^d$ of unit measure such that $\1_{\Omega_n}\to \1_\Omega$ in $L^1(\R^d)$ and  a function $w\in H^1(\R^d)$ such that $w_{\Omega_n}\to w_\Omega$ in $L^2(\R^d)$. Moreover, $w\ge0$ on $\R^d$ and $\{w>0\}\subset\Omega$.  By Proposition \ref{prop33} and the inequality $R_{\{w>0\}}\le R_\Omega$, we conclude that the \emph{compactness} $(i)$ holds. 

\item $\lim_{r\to\infty}Q(r)=\alpha\in(0,1)$. Let  \(\eps>0\), then  there exits  $r_\eps\ge 1/\eps$ such that for every $R\ge r_\eps$ we have $\alpha-\eps\le Q(R)\le\alpha$. By the  monotonicity of \(Q_n(r)\) and the pointwise convergence to \(Q(r)\) we can find  \(R_\eps \ge r_\eps+1/\eps\) and \(N_\eps\) such that  
\[
\alpha-2\eps\le Q_n(R)\le \alpha+\eps,\quad  \text{for every $n\ge N_\eps$ and every  $r_\eps\le R\le R_\eps$.}
\]
By the definition of $Q_n$ the above inequality  implies that there is a sequence $x_n\in\R^d$ such that 
$$\alpha-3\eps\le |\Omega_n\cap B_R(x_n)|\le \alpha+\eps\quad  \text{for every $n\ge N_\eps$ and every  $r_\eps\le R\le R_\eps$.}$$
Defining  
\[
A_n^\eps=\Omega_n\cap B_{r_\eps}(x_n) \quad \text{and}\quad B_n^\eps=\Omega_n\setminus\overline B_{R_\eps}(x_n),\]
 we see  that, thanks to the choice of \(R_\eps\), 
  \begin{equation}\label{pio1}
  \begin{array}{cc}
 d(A_n^\eps,B_n^\eps)\ge R_\eps-r_\eps\ge 1/\eps, \\
 \\
 \quad\big| |A_n|-\alpha|+\big | |B_n|-(1-\alpha)\big|\le 8\eps\quad \text{and}\quad  |\Omega_n\setminus(A_n^\eps\cup B_n^\eps)|\le 4\eps.
 \end{array}
\end{equation} 
Up to substitute $r_\eps$ and $R_\eps$ with some $\widetilde r_\eps\in(r_\eps,r_\eps+\sqrt{\eps})$ and $\widetilde R_\eps\in(R_\eps-\sqrt{\eps},R_\eps)$, we may suppose that 
$$\mathcal{H}^{d-1}(\partial B_{r_\eps}(x_n)\cap \Omega_n)+\mathcal{H}^{d-1}(\partial B_{R_\eps}(x_n)\cap \Omega_n)\le2\sqrt{\eps},$$ 
and, as a consequence,
\begin{equation}\label{pio2}
P(A_n^\eps\cup B_n^\eps)\le P(\Omega_n)+2\sqrt{\eps}.
\end{equation} 
It remains to estimate the difference $w_{\Omega_n}-w_{U_n}$, where $U_n:=A_n^\eps\cup B_n^\eps$. Let $\phi\in C^\infty_c(\R^d)$ be positive with support in $B_2$ and equal to $1$ on $B_1$. For $r>0$, consider the function $\phi_r(x)=\phi(x/r)$. Defining 
$$u_n^1:=\phi_{r_\eps/2}(\cdot-x_n)w_{\Omega_n}\in \widetilde H^1_0(A_n^\eps),\qquad u_n^2:=\left(1-\phi_{R_\eps}(\cdot-x_n)\right)w_{\Omega_n}\in\widetilde H^1_0(B_n^\eps),$$  
we have that 
\begin{equation}\label{master}
\|w_{\Omega_n}-u_n^1-u_n^2\|_{L^2}\le 4\eps\|w_{\Omega_n}\|_\infty,
\end{equation}
 where we noticed that  we may choose $r_\eps$ and $R_\eps$ still satisfying all the previous assumptions and such that
$$|(\Omega_n\cap B_{2R_\eps}(x_n))\setminus B_{r_\eps/2}(x_n)|\le 4\eps.$$
Moreover, there is some universal constant $C>0$ such that 
\begin{equation}\label{master2}
\int_{\R^d}|\nabla u_n^1|^2\,dx+\int_{\R^d}|\nabla u_n^2|^2\,dx- \int_{\R^d}|\nabla w_{\Omega_n}|^2\,dx\le\frac{C}{r_\eps}\le C\eps.
\end{equation}
where the last inequality follows by the choice of \(r_\eps\) we made at the beginning of the proof.

Since $U_n\subset\Omega_n$, we have that $w_{U_n}$ is the orthogonal projection of $w_{\Omega_n}$ on the space $\widetilde H^1_0(U_n)$ with respect to the \(H^1_0\) scalar product \footnote{To see this, just  notice that $w_{U_n}$ is the solution of the Euler-Lagrange equation associated to the functional $F:\widetilde H^1_0(U_n)\to\R$ defined by
 \[
 F(v)=\frac 1 2 \int|\nabla (w_{\Omega_n}-v)|^2\,dx=\frac 1 2 \int|\nabla w _{\Omega_n}|^2\,dx-\int v\,dx+\frac 1 2 \int |\nabla v |^2\,dx, 
  \]
where, in the last equality, we have taken into account the equation satisfied by \(w_{\Omega_n}\).}. Hence
\begin{equation}\label{pio3}
\begin{split}
&\int|\nabla(w_{\Omega_n}-w_{U_n})|^2\,dx\le \int|\nabla(w_{\Omega_n}-u_n^1-u_n^2)|^2\,dx\\
&=\int|\nabla w_{\Omega_n}|^2\,dx-2\int (u_n^1+u_n^2)\,dx+\int|\nabla u_n^1|^2\,dx+\int|\nabla u_n^2|^2\,dx\\
&=2\int \left(w_{\Omega_n}-u_n^1-u_n^2\right)\,dx+\int\left(|\nabla u_n^1|^2+|\nabla u_n^2|^2-|\nabla w_{\Omega_n}|^2\right)\,dx\le C\eps.
\end{split}
\end{equation}
where in the first and second equality we have taken into account  the equation satisfied by \(w_{\Omega_n}\) while in the last inequality we have used \eqref{master} and \eqref{master2}.
Sending \(\eps \to 0\) we see that \eqref{pio1} gives points (b) and (c) of the dichotomy case statement, \eqref{pio2} gives point (d), and \eqref{pio3} together with Proposition \ref{lemma36} gives (e). Since (a) is trivially true we obtain that in this case the \emph{dichotomy} $(ii)$ holds.

\item $\lim_{r\to\infty}Q(r)=0$. In this case the first part of the statement of  the vanishing case is clear, while for the second one we need some further considerations, similar to those in \cite[Proposition 3.5]{buc00}, based on the Lieb's Lemma (see \cite{lieb}). First notice  that by a truncation argument, it is enough to prove the statement in the case when all $\Omega_n$ are bounded sets. Since  $\|R_{\Omega_n}\|_{\mathcal{L}(L^2(\R^d))}=\lambda_1(\Omega_n)^{-1}$ a it is enough to prove that
 \begin{equation}\label{infinito}
 \lim_{n\to \infty} \lambda_1(\Omega_n)=+\infty.
  \end{equation}
  Let $\eps>0$ be fixed, $R>0$ be large enough and $N\in\N$ be such that for all $n\ge N$ and all $x\in\R$, we have $|\Omega_n\cap B_R(x)|\le\eps$. By the Lieb's Lemma, we have that there is some $x\in\R^d$ such that 
$$\lambda_1(\Omega_n\cap B_R(x))\le \lambda_1(\Omega_n)+\lambda_1(B_R(x)).$$
Using that $\lambda_1(B_r)=r^{-2}\lambda_1(B_1)$, for any $r>0$, and that the ball minimizes $\lambda_1$ among all sets of given measure, we have
\begin{equation}\label{vanishing}
\lambda_1(B_1)\eps^{-2/d}-R^{-2}\lambda_1(B_1)\le \lambda_1(\Omega_n).
\end{equation}
Since the left-hand side of \eqref{vanishing} goes to infinity as $\eps\to0$, we obtain \eqref{infinito}. 
\end{enumerate}  

Let us assume now that \eqref{limsup} does not hold. In this case, clearly \(\1_{\Omega_n}\to 0\) in \(L^1(\R^d)\), hence the same arguments of case (iii) imply that we are in the vanishing case.


\bigskip
{\small\noindent
Guido De Philippis:
Hausdorff Center for Mathematics, \\
Endenicher Allee 62, D-53115 Bonn-GERMANY\\
{\tt guido.dephilippis@hcm.uni-bonn.de}

\bigskip\noindent
Bozhidar Velichkov:
Scuola Normale Superiore di Pisa\\
Piazza dei Cavalieri 7, 56126 Pisa - ITALY\\
{\tt b.velichkov@sns.it}

\end{document}